\newcommand{\leg}[2]{\genfrac{(}{)}{}{}{#1}{#2}}
\newtheorem{theorem}{Theorem}
\newtheorem{lemma}[theorem]{Lemma}
\newtheorem{corollary}[theorem]{Corollary}
\newtheorem{proposition}[theorem]{Proposition}
\theoremstyle{remark}
\newtheorem{definition}[theorem]{Definition}
\newtheorem{remarks}[theorem]{Remarks}
\newtheorem*{remark}{Remark}
\numberwithin{theorem}{section} \numberwithin{equation}{section}
\newcommand{\al}{\alpha}
\newcommand{\be}{\beta}
\newcommand{\ga}{\gamma}
\renewcommand{\th}{\theta}
\newcommand{\eps}{\varepsilon}
\newcommand{\LR}{\longrightarrow}
\newcommand{\gd}{{\mathfrak d}}
\newcommand{\mfP}{\mathfrak{P}}
\newcommand{\mfa}{\mathfrak{a}}
\newcommand{\mfd}{\mathfrak{d}}
\newcommand{\ov}[1]{\overline{#1}}
\newcommand{\isom}{\simeq}
\newcommand{\calD}{\mathcal{D}}
\newcommand{\Gd}{{\mathfrak D}}
\newcommand{\FF}{\mathcal{F}}
\newcommand{\om}{\omega}
\newcommand{\mfp}{\mathfrak{p}}
\newcommand{\mfb}{\mathfrak{b}}
\newcommand{\mfc}{\mathfrak{c}}
\newcommand{\calL}{\mathcal{L}}
\newcommand{\calA}{\mathcal{A}}
\newcommand{\calF}{\mathcal{F}}
\newcommand{\Cl}{{\text {\rm Cl}}}
\newcommand{\Tr}{{\text {\rm Tr}}}
\newcommand{\rk}{{\text {\rm rk}}}
\newcommand{\sign}{{\text {\rm sign}}}
\newcommand{\WL}{\widetilde{L}}
\newcommand{\Q}{\mathbb{Q}}
\newcommand{\Z}{\mathbb{Z}}
\newcommand{\LL}{{\mathcal L}}
\newcommand{\p}{\mathfrak p}
\renewcommand{\a}{\mathfrak a}
\newcommand{\mfm}{\mathfrak m}
\newcommand{\q}{\mathfrak q}
\renewcommand{\c}{\mathfrak c}
\renewcommand{\d}{\mathfrak d}
\newcommand{\N}{{\mathcal N}}
\DeclareMathOperator{\NO}{\mathcal{N}}
\newcommand{\textmod}{{\text {\rm mod}}}
\newcommand{\Gal}{{\text {\rm Gal}}}
\newcommand{\Disc}{\textnormal{Disc}}
\newcommand{\Ker}{\textnormal{Ker}}
\renewcommand{\b}{{\mathfrak b}}
\newcommand{\GP}{{\mathfrak P}}
\newcommand{\z}{\zeta}
\begin{document}
\title[Dirichlet Series Associated to Quartic Fields with Given Resolvent]
{Dirichlet Series Associated to Quartic Fields with Given Resolvent}

\author{Henri Cohen}
\address{Universit\'e Bordeaux I, Institut de Math\'ematiques, U.M.R. 5251 du
C.N.R.S, 351 Cours de la Lib\'eration, 33405 TALENCE Cedex, FRANCE}
\email{Henri.Cohen@math.u-bordeaux1.fr}
\author{Frank Thorne}
\address{Department of Mathematics, University of South Carolina, 1523 Greene Street, Columbia, SC 29208, USA}
\email{thorne@math.sc.edu}

\subjclass[2010]{11R16}

\begin{abstract}

Let $k$ be a cubic field. We give an explicit formula for the Dirichlet 
series $\sum_K|\Disc(K)|^{-s}$, where the sum is over isomorphism
classes of all quartic fields whose cubic resolvent field is isomorphic to 
$k$. Our work is a sequel to the unpublished preprint \cite{CDO_quartic} whose
results have been summarized in \cite{Coh_a4s4}, so we include complete
proofs so as not to rely on unpublished work.

This is a companion paper to \cite{CT3} where we compute the
Dirichlet series associated to cubic fields having a given quadratic
resolvent.
\end{abstract}

\maketitle
\section{Introduction}
In a previous paper \cite{CT3}, we studied the problem of enumerating cubic 
fields\footnote{Note that in this paper, number fields are always considered 
up to isomorphism.} with fixed quadratic resolvent.
A classical result of Cohn \cite{cohn} is that
\begin{equation}\label{eqn_cohn}
\sum_{K \ \textnormal{cyclic cubic}} \frac{1}{\Disc(K)^s} = - \frac{1}{2} + \frac{1}{2} \bigg( 1 + \frac{1}{3^{4s}} \bigg) \prod_{p \equiv 1 \pmod 6}
\bigg(1 + \frac{2}{p^{2s}}\bigg)\;.
\end{equation}
We generalized this as follows. If $K$ is a non-cyclic cubic field,
then its Galois closure $\widetilde{K}$ contains a unique quadratic field $k$, called the \emph{quadratic resolvent}.
We have $\Disc(K) = \Disc(k) f(K)^2$ for an integer $f(K)$, and for each fixed $k$ we proved explicit formulas for the Dirichlet
series $\sum_K f(K)^{-s}$, where the sum is over all cubic fields $K$ with quadratic resolvent $k$. For example, if
$k = \Q(\sqrt{-255})$ we have
\begin{equation}\label{eqn_cub_ex}
\sum_K \dfrac{1}{f(K)^s}
=-\frac{1}{2} + \dfrac{1}{2}\left(1+\dfrac{2}{3^s}+\dfrac{6}{3^{2s}}\right)\prod_{\leg{6885}{p}=1}\left(1+\dfrac{2}{p^s}\right)\\
+\left(1-\dfrac{1}{3^s}\right)\prod_p\left(1+\dfrac{\om_L(p)}{p^s}\right)\;,\end{equation}
where the sum is over all cubic fields with quadratic resolvent $k$, $L$ is 
the cubic field of discriminant $6885=(-27)\cdot(-255)$ determined by 
$x^3 - 12x - 1 = 0$, and
$\om_{L}(p)$ is equal to $2$ or $-1$ when $p$ is totally split or inert in $L$
respectively, and $\om_{L}(p) = 0$ otherwise. In general, the sum has a main 
term plus one additional term for each cubic field of discriminant 
$-D/3$, $-3D$, or $-27D$, where $D$ is the discriminant of $k$.

Our work extended work of the first author and Morra \cite{CM}, which 
established more general formulas in a less explicit form. In the quartic 
case, such formulas have been proved by the first author, Diaz y Diaz, and 
Olivier \cite{Coh_a4s4, CDO_C4, CDO_D4, CDO_V4, CDO_quartic}. This work also
yields explicit Dirichlet series similar to \eqref{eqn_cohn} and
\eqref{eqn_cub_ex}. For example, for any \emph{Abelian} group $G$ set
\begin{equation}
\Phi(G,s)=\sum_{\Gal(K/\Q)\isom G}\dfrac{1}{|\Disc(K)|^s}\;.
\end{equation}
Then we have the explicit Dirichlet series 
$$\Phi(C_2,s)=\left(1-\dfrac{1}{2^s}+\dfrac{2}{2^{2s}}\right)\dfrac{\zeta(s)}{\zeta(2s)}-1\;,$$
\begin{align*}
\Phi(C_4,s)&=\dfrac{\zeta(2s)}{2\zeta(4s)}\biggl(\biggl(1-\dfrac{1}{2^{2s}}+\dfrac{2}{2^{4s}}+\dfrac{4}{2^{11s}+2^{9s}}\biggr)\prod_{p\equiv1\pmod4}\biggl(1+\dfrac{2}{p^{3s}+p^s}\biggr)\\
&\kern60pt-\biggl(1-\dfrac{1}{2^{2s}}+\dfrac{2}{2^{4s}}\biggr)\biggr)\;,
\end{align*}
\begin{align*}
\Phi(V_4,s)&=\dfrac{1}{6}\biggl(1+\dfrac{3}{2^{4s}}+\dfrac{6}{2^{6s}}+\dfrac{6}{2^{8s}}\biggr)\prod_{p \neq 2}\biggl(1+\dfrac{3}{p^{2s}}\biggr)-\dfrac12\Phi(C_2,2s)-\dfrac16\;.
\end{align*}

The same authors proved similar formulas for those $C_4$ and $V_4$ extensions
having a fixed quadratic subfield; the former is Theorem 4.3 of \cite{CDO_C4} and the latter
is unpublished. They also obtained analogous formulas for $D_4$ extensions, for which 
we refer to \cite{CDO_D4} and Section 7.1 of \cite{CDO_all}.

In the present paper we tackle this problem for $A_4$ and $S_4$-quartic fields. We count 
such fields by their \emph{cubic resolvents}:
Suppose that $K/\Q$ is a quartic field whose Galois closure 
$\widetilde{K}$ has Galois group $A_4$ or $S_4$. In the $A_4$ case, 
$\widetilde{K}$ contains a unique cyclic cubic subfield $k$, and in the $S_4$
case, $\widetilde{K}$ contains three isomorphic noncyclic cubic subfields $k$.
In either case $k$ is called the \emph{cubic resolvent} of $K$, it is 
unique up to isomorphism, and it satisfies $\Disc(K) = \Disc(k) f(K)^2$ for
some integer $f(K)$. 


Let $\calF(k)$ be the set of all $A_4$ or $S_4$-quartic fields whose cubic
resolvent is isomorphic to $k$. We set the following definition.
\begin{definition}\label{defphik} For a cubic field $k$, we set
$$\Phi_k(s)=\dfrac{1}{a(k)}+\sum_{K\in\FF(k)}\dfrac{1}{f(K)^s}\;,$$
where $a(k)=3$ if $k$ is cyclic and $a(k)=1$ otherwise.\footnote{This differs 
slightly from the definition given in \cite{Coh_a4s4}.}

\end{definition}
We will prove explicit formulas for $\Phi_k(s)$, building on previous work of the first author, 
Diaz y Diaz, and Olivier (\cite{CDO_quartic}; see also \cite{Coh_a4s4} for a 
published summary) which, like the subsequent paper \cite{CM}, established a more general
but less explicit formula.
Since \cite{CDO_quartic} is unpublished, we will include complete proofs of the results we need.

In the cubic case, our formulas involved sums over cubic fields of discriminant $-D/3$, $-3D$, and $-27D$, and in the quartic case we will sum over fields in
a similar set $\LL_2(k)$:

\begin{definition}\label{defll} Given any cubic field $k$ (cyclic or not),
let $\LL(k)$ be the set of isomorphism classes of quartic fields whose
cubic resolvent is isomorphic to $k$, with the additional restriction
that the quartic is totally real when $k$ is such. Furthermore, for any $n$
define $\LL(k,n^2)$ to be the subset of $\LL(k)$ of those fields with 
discriminant equal to $n^2\Disc(k)$.

Finally, we define $\LL_{tr}(k,64)$ to be the subset of those $L\in\LL(k,64)$ 
such that $2$ is totally ramified in $L$, and we set
$$\LL_2(k)=\LL(k,1)\cup\LL(k,4)\cup\LL(k,16)\cup\LL_{tr}(k,64)\;.$$
\end{definition}

Note that if $k$ is totally real the elements of $\FF(k)$ are totally real or 
totally complex, and $\LL(k)$ is the subset of totally real ones, while if 
$k$ is complex then the elements of $\LL(k)=\FF(k)$ have mixed signature 
$r_1=2$, $r_2=1$.

\begin{remark} In this paper, quartic fields with cubic resolvent $k$
will be denoted $K$ or $L$. Generally $K$ will refer to fields enumerated
by $\Phi_k(s)$, and $L$ will refer to fields in $\calL_2(k)$. Note however 
that in many places this distinction will be irrelevant.
\end{remark}

We introduce some standard notation for splitting types of primes in a 
number field. If $L$ is, say, a quartic field, and $p$ is a prime for which 
$(p) = \mfp_1^2 \mfp_2$ in $L$, where $\mfp_i$ has residue class degree $i$ 
for $i = 1, 2$, we say that $p$ has splitting type $(21^2)$ in $L$ (or simply
that $p$ is $(21^2)$ in $L$). Other splitting types such as 
$(22), \ (1111), \ (1^4)$, etc. are defined similarly.
Moreover, when $2$ has type $(1^2 1)$ in a cubic field $k$,
we say that $2$ has type $(1^21)_0$ or $(1^21)_4$ depending on whether 
$\Disc(k) \equiv 0 \pmod 8$ or $\Disc(k) \equiv 4 \pmod 8.$

\begin{definition}\label{def_omega}
Let $L$ be a quartic $A_4$ or $S_4$-quartic field. For a prime number $p$ we 
set
$$\om_L(p)=\begin{cases}
-1&\text{\quad if $p$ is $(4)$, $(22)$, or $(21^2)$ in $L$\;,}\\
1&\text{\quad if $p$ is $(211)$ or $(1^211)$ in $L$\;,}\\
3&\text{\quad if $p$ is $(1111)$ in $L$\;,}\\
0&\text{\quad otherwise\;.}
\end{cases}$$
\end{definition}

Our main theorems are the following:

\begin{theorem}\label{thm_main} Let $k$ be a cubic field, cyclic or not.
\begin{enumerate}\item We have
\begin{align*}2^{r_2(k)}\Phi_k(s)&=\dfrac{1}{a(k)}M_1(s)\prod_{p\Z_k=\p_1\p_2,\ p\ne2}\left(1+\dfrac{1}{p^s}\right)\prod_{p\Z_k=\p_1^2\p_2,\ p\ne2}\left(1+\dfrac{1}{p^s}\right)
\prod_{p\Z_k=\p_1\p_2\p_3,\ p\ne2}\left(1+\dfrac{3}{p^s}\right)\\
&\phantom{=}+\sum_{L\in\LL_2(k)}M_{2,L}(s)\prod_{p\ne2}\left(1+\dfrac{\om_L(p)}{p^s}\right)\;,\end{align*}
where the $2$-Euler factors $M_1(s)$ and $M_{2,L}(s)$ are given
in the tables below, where the $k$ and $L$-splits refer to the splitting of
the prime $2$.
\item If $k$ is cyclic we have
\begin{equation}\Phi_k(s)=\dfrac{1}{3}M_1(s)\prod_{p\Z_k=\p_1\p_2\p_3,\ p\ne2}\left(1+\dfrac{3}{p^s}\right)
+\sum_{L\in\LL(k,1)}M_{2,L}(s)\prod_{p\Z_k=\p_1\p_2\p_3,\ p\ne2}\left(1+\dfrac{\om_L(p)}{p^s}\right)\;.\end{equation}
\end{enumerate}
\end{theorem}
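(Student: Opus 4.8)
The plan is to adapt the method of the companion cubic paper, substituting for the Cohen--Morra parametrization its quartic analogue. The engine is Galois-theoretic. Writing $\widetilde K$ for the Galois closure of $K\in\FF(k)$ and $\widetilde k$ for the Galois closure of its cubic resolvent, the normal Klein subgroup $V_4\trianglelefteq S_4$ (respectively $V_4\trianglelefteq A_4$) identifies $\widetilde k$ with the fixed field of $V_4$ and exhibits $\widetilde K/\widetilde k$ as a $V_4$-extension on which $\Gal(\widetilde k/\Q)$ acts through the standard inclusion $\Gal(\widetilde k/\Q)\hookrightarrow\GL_2(\F_2)=\Aut(V_4)$. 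Since $\mu_2\subset\widetilde k$, Kummer theory over $\widetilde k$ turns such an equivariant $V_4$-extension into a pair of quadratic characters permuted by $\Gal(\widetilde k/\Q)$, equivalently a single quadratic character $\chi$ of a ray class group of $\widetilde k$ subject to the equivariance condition $\chi^{1+\tau+\tau^2}=1$ coming from the cyclic action together with a descent condition to $\Q$. The first step is to make this correspondence precise, recording a bijection between $\FF(k)$ and the resulting set of characters under which $f(K)$ equals the norm to $\Q$ of the conductor $\mathfrak{f}(\chi)$ away from $2$; in this dictionary the constant $1/a(k)$ is the trivial-character term, reflecting the order-$a(k)$ symmetry of the construction, and $2^{r_2(k)}$ is an archimedean normalisation tied to the signature constraints built into $\FF(k)$ and $\LL(k)$.

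Second, I would rewrite $\Phi_k(s)$ as a sum of $\mathfrak{N}(\mathfrak{f}(\chi))^{-s}$ over the admissible $\chi$ and decompose this sum according to the restriction of $\chi$ to the $2$-torsion of the relevant group. This orthogonality step produces the shape of the formula: summing over a fixed coset amounts to twisting by a distinguished quadratic character, and the finite set of such twists is parametrised exactly by $\LL_2(k)$, whose admissible discriminant exponents at $2$ are recorded by $\LL(k,1)$, $\LL(k,4)$, $\LL(k,16)$ and the totally-ramified refinement $\LL_{tr}(k,64)$. The trivial coset yields the main term, and the coset attached to $L$ yields the twisted Euler product $\prod_{p\ne2}(1+\om_L(p)/p^s)$; here one checks that the weight is indeed $\om_L(p)$, which for unramified $p$ is the character $\chi_\rho(\Frob_p)$ of the standard three-dimensional representation $\rho$ of $\Gal(\widetilde L/\Q)$, i.e.\ one less than the number of degree-one primes of $L$ above $p$, the ramified values being the extensions recorded in Definition~\ref{def_omega}.

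Third is the local computation at primes $p\ne2$. For each splitting type of $p$ in $k$ one enumerates the admissible local characters and their conductor contributions, matching the local density against $1+1/p^s$ when $p\Z_k=\p_1\p_2$ or $p\Z_k=\p_1^2\p_2$, against $1+3/p^s$ when $p\Z_k=\p_1\p_2\p_3$, and against $1+\om_L(p)/p^s$ in the twisted sums. These are routine once the dictionary between $\Frob_p$ in $\Gal(\widetilde k/\Q)$, the admissible characters, and the splitting type of $p$ in $L$ is in hand; inert and totally ramified primes of $k$ contribute trivial factors, which is why only the displayed product ranges appear.

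The main obstacle, and the bulk of the work, is the prime $2$, which is responsible for the tables $M_1(s)$ and $M_{2,L}(s)$. Here $\widetilde k/\Q$ and the quadratic characters may ramify wildly, the conductor--discriminant relation at $2$ is delicate, and the finer invariants recorded in the definitions---the splitting of $2$ in $k$, the distinction between $(1^21)_0$ and $(1^21)_4$, and the totally-ramified condition defining $\LL_{tr}(k,64)$---must all be tracked. I would proceed case by case through the $2$-adic splitting types of $k$, enumerating by hand the relevant characters modulo powers of $2$ together with their conductors and the induced splitting type of $2$ in $L$, and read off $M_1(s)$ and $M_{2,L}(s)$; combining these with the prime-to-$2$ Euler products of the previous steps gives part~(1). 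Part~(2) then follows by specialising to cyclic $k$: such $k$ is totally real, so $2^{r_2(k)}=1$ and $a(k)=3$; being Galois it has no primes of type $\p_1\p_2$ or $\p_1^2\p_2$, removing those products; its inert and ramified primes give $\om_L(p)=0$ and weight $0$, so only the totally split primes survive in both products; and, since $2$ is unramified in $k$, the $2$-adic analysis collapses $\LL_2(k)$ to $\LL(k,1)$ and fixes the corresponding $2$-Euler factors, giving the stated simpler formula.
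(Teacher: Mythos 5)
Your overall architecture (parametrize, split off a main term, index the twists by a finite set of quartic fields, grind out local factors) matches the shape that any proof of this theorem must have, but your route is genuinely different from the paper's, and it is the harder one. You propose the Cohen--Morra-style method of the cubic companion paper: equivariant Kummer/class field theory over the Galois closure $\widetilde{k}$, with ray class characters $\chi$ of $\widetilde{k}$ satisfying $\chi^{1+\tau+\tau^2}=1$ plus a descent condition. The paper never leaves the cubic field: quartic fields with resolvent $k$ are parametrized by quadratic extensions $K_6=k(\sqrt{\al})/k$ of \emph{trivial norm} (Theorem \ref{thm_quartic_corr}), Hecke's conductor theorem and $2$-Selmer groups are applied over $k$ itself, and the resulting groups $C_{\c^2}$ are identified with ray class groups $\Cl_{(4)}(k)/\Cl_{(4)}(k)^2$. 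This is exactly the simplification advertised in the introduction: since $\pm1\in k$, no equivariance over a larger field is needed. On your route, in the $S_4$ case the base field is sextic, stability must be imposed under the full $S_3$ (not only the $3$-cycle), and your conductor dictionary is wrong as stated: already in the $A_4$ case ($\widetilde{k}=k$) one has $\N(\mathfrak{f}(\chi))=\N(\d(K_6/k))=f(K)^2$, not $f(K)$, while over a sextic $\widetilde{k}$ norms produce $f(K)^4$ away from primes ramified in $\widetilde{k}/k$, and those ramified primes (dividing $\Disc(k)$) need a separate treatment your plan does not mention.

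Beyond these fixable issues there are two genuine gaps. First, the statement that the twists "are parametrised exactly by $\LL_2(k)$" is asserted, but it is the core of the theorem: one must prove that the relevant characters correspond to quartic fields whose $K_6$ satisfies $\d(K_6/k)\mid(4)$, that this forces the restriction to $\LL_{tr}(k,64)$ rather than all of $\LL(k,64)$ (fields in $\LL(k,64)\setminus\LL_{tr}(k,64)$ have $\N(\d)=64$ but $\d\nmid(4)$, and must be shown not to occur), and that at most one of $\LL(k,4)$, $\LL(k,16)$, $\LL_{tr}(k,64)$ is nonempty; this occupies the paper's Sections 6--7 (Theorem \ref{thmcrux}, Propositions \ref{prop_l2k} and \ref{propdist}), and your plan offers no mechanism for it. Second, your deduction of part (2) is incorrect: you claim that "since $2$ is unramified in $k$, the $2$-adic analysis collapses $\LL_2(k)$ to $\LL(k,1)$", but unramifiedness of $2$ does not give this --- the paper's own example $k=\Q[x]/(x^3-4x-1)$ of discriminant $229$ has $2$ unramified (split as $(21)$), yet $\LL_{tr}(k,64)\neq\emptyset$. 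The true reason in the cyclic case is Proposition \ref{prop_53}: $\Cl(k)$ and $\Cl^+(k)$ are $\Z[\zeta_3]$-modules, so their $2$-ranks are even, and the Armitage--Fr\"ohlich inequality $\rk^+_2(k)\le\rk_2(k)+1$ then forces $\rk^+_2(k)=\rk_2(k)$, which is what kills $\LL(k,4)$, $\LL(k,16)$, and $\LL_{tr}(k,64)$. This Galois-module argument is absent from your proposal, and without it part (2) does not follow from part (1).
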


\bigskip

\centerline{
\begin{tabular}{|c||c|c|}
\hline
$k$-split & $M_1(s)$ & $8M_1(1)$ \\
\hline\hline
$(3)$ & $1+3/2^{3s}$ & $11$ \\
\hline
$(21)$ & $1+1/2^{2s}+4/2^{3s}+2/2^{4s}$ & $15$ \\
\hline
$(111)$ & $1+3/2^{2s}+6/2^{3s}+6/2^{4s}$ & $23$ \\
\hline
$(1^21)_0$ & $1+1/2^s+2/2^{3s}+4/2^{4s}$ & $16$ \\
\hline
$(1^21)_4$ & $1+1/2^s+2/2^{2s}+4/2^{4s}$ & $18$ \\
\hline
$(1^3)$ & $1+1/2^s+2/2^{3s}$ & $14$ \\
\hline
\end{tabular}}

\bigskip

\centerline{
\begin{tabular}{|c|c|c||c|||c|c|c||c|}
\hline
$k$-split & $L$-split & $n^2$ & $M_{2,L}(s),\ L\in\LL(k,n^2)$ & $k$-split & $L$-split & $n^2$ & $M_{2,L}(s),\ L\in\LL(k,n^2)$ \\
\hline\hline
$(3)$ & $(31)$ & $1$ & $1+3/2^{3s}$ & $(1^21)_0$& $(21^2)$ & $1$ & $1+1/2^s+2/2^{3s}-4/2^{4s}$ \\
\hline
$(3)$ & $(1^4)$ & $64$ & $1-1/2^{3s}$ & $(1^21)_0$& $(1^211)$ & $1$ & $1+1/2^s+2/2^{3s}+4/2^{4s}$ \\
\hline
$(21)$& $(4)$ & $1$ & $1+1/2^{2s}-2/2^{4s}$ & $(1^21)_0$& $(1^2 1^2)$ & $4$ & $1+1/2^s-2/2^{3s}$ \\
\hline
$(21)$& $(211)$ & $1$ & $1+1/2^{2s}+4/2^{3s}+2/2^{4s}$ & $(1^21)_0$& $(1^4)$ & $64$ & $1-1/2^s$ \\
\hline
$(21)$& $(2^2)$ & $16$ & $1+1/2^{2s}-4/2^{3s}+2/2^{4s}$ & $(1^21)_4$& $(21^2)$ & $1$ & $1+1/2^s+2/2^{2s}-4/2^{4s}$ \\
\hline
$(21)$& $(1^2 1^2)$ & $16$ & $1+1/2^{2s}-2/2^{4s}$ & $(1^21)_4$& $(1^211)$ & $1$ & $1+1/2^s+2/2^{2s}+4/2^{4s}$ \\
\hline
$(21)$& $(1^4)$ & $64$ & $1-1/2^{2s}$ & $(1^21)_4$& $(2^2)$ & $4$ & $1+1/2^s-2/2^{2s}$ \\
\hline
$(111)$& $(22)$ & $1$ & $1+3/2^{2s}-2/2^{3s}-2/2^{4s}$ & $(1^21)_4$& $(2^2)$ & $16$ & $1-1/2^s$ \\
\hline
$(111)$& $(2^2)$ & $16$ & $1-1/2^{2s}-2/2^{3s}+2/2^{4s}$ & $(1^21)_4$& $(1^2 1^2)$ & $16$ & $1-1/2^s$ \\
\hline
$(111)$& $(1111)$ & $1$ & $1+3/2^{2s}+6/2^{3s}+6/2^{4s}$ & $(1^3)$& $(1^3 1)$ & $1$ & $1+1/2^s+2/2^{3s}$ \\
\hline
$(111)$& $(1^2 1^2)$ & $16$ & $1-1/2^{2s}+2/2^{3s}-2/2^{4s}$ & $(1^3)$& $(1^4)$ & $4$ & $1+1/2^s-2/2^{3s}$ \\
\hline
&&&&$(1^3)$& $(1^4)$ & $64$ & $1-1/2^s$ \\
\hline
\end{tabular}}

\bigskip

\begin{remarks}\begin{itemize}
\item It will follow from
Proposition \ref{prop_l2k} \eqref{it_l2k_empty} that (2) is a special case of (1), and we will prove the two results
simultaneously.
\item In case (2) where $k$ is cyclic, the splitting behavior of the primes in
$k$ is determined by congruence conditions. Also, since $2$ can only split
as $(3)$ or $(111)$ in $k$, only $\LL(k,1)$ occurs and the list of possible
$2$-Euler factors is very short: only two cases for $M_1(s)$ and
three cases for $M_{2,L}(s)$.
\item As a check on our results, we numerically verified the above theorems
for the first $10000$ totally real and the first $10000$ complex cubic fields.
As a further check on the consistency of our results, we also observe that the
values $8M_1(1)$ are equal to the constants $c_2(k)$ in Theorem 1.2 of
\cite{Coh_a4s4}.\end{itemize}
\end{remarks}

The formulas are much longer in the $S_4$ case than in the $S_3$ case, simply 
because the number of splitting types in an $S_4$-quartic 
field is much larger than in a cubic field. However,
studying quartic extensions having a given cubic resolvent is in fact
\emph{simpler} than the analogous study of cubic extensions having a given
quadratic resolvent. The reason for this is as follows: in the cubic case, it
is necessary to count cyclic cubic extensions of a quadratic field, and for 
this (because of Kummer theory, or equivalently, of class field theory) we 
must adjoin cube roots of unity, which complicates matters. On the other hand,
we will see that in the quartic $A_4$ and $S_4$ cases we must count $V_4$ 
extensions of a cubic field having certain properties, and here it is 
\emph{not} necessary to adjoin any root of unity since the square roots of 
unity are already in the base field.
\\
\\
{\bf Outline of the paper.}
We begin in Section \ref{sec_quartic} by recalling a parametrization
of quartic fields $K$ in term of pairs $(k, K_6)$, where $k$ is the cubic
resolvent of $K$ and $K_6$ is a quadratic extension of `trivial norm'. This allows us 
to count quartic fields by counting such quadratic extensions.

Section \ref{sec:CDO} consists essentially of work of the first author,
Diaz y Diaz, and Olivier \cite{CDO_quartic}, which establishes a version of Theorem \ref{thm_main}
in an abstract setting. As this work was not published, we provide full proofs here.
In Section \ref{sec_c} we study certain groups $C_{\c^2}$ appearing in Section \ref{sec:CDO}, and prove that
they are essentially class groups.

In Section \ref{sec_xnew} we state a theorem establishing the possible splitting types of primes $p$
in quartic fields and their associated pairs $(k, K_6)$. As the proof requires lots of checking of special cases,
and also overlaps with unpublished work of Martinet \cite{Mar}, we only sketch the proof here, but a note
with complete proofs is available from the second author's website.

In Sections  
\ref{sec_arith_quartic} and \ref{sec_arith_s4} we further study the arithmetic of quartic fields associated
to characters of $C_{\c^2}$; some of these results are potentially of independent interest. This then brings us to the proofs of our main results in 
Section \ref{sec_quartic_proofs}. 
In Section \ref{sec_sig} we prove a version of our main theorem counting quartic fields with prescribed
signature conditions; the statement and proof of this generalization turn out to be surprisingly simple.`
We conclude 
in Section 
\ref{sec_computations} with numerical examples 
which were helpful in double-checking our results. 

\section*{Acknowledgments}
The authors would like to thank Karim Belabas, Arnaud Jehanne, Franz 
Lemmermeyer, Guillermo Mantilla-Soler, Jacques Martinet, and 
Simon Rubinstein-Salzedo, among many others, for helpful discussions related 
to the topics of this paper.

The second author would also like to thank the National Science Foundation\footnote{Grant DMS-1201330} for financial support.

\section{The Parametrization of Quartic Fields}\label{sec_quartic}

\begin{definition}\label{deftriv}\hfill \begin{enumerate}
\item We will say that an element $\al\in k^*$ (resp., an ideal $\a$ of $k$)
has square norm if $\NO(\al)$ (resp., $\N(\a)$) is a square in $\Q^*$.\footnote{Note that in \cite{Coh_a4s4} there is a misprint in the definition of square
norm, where ``$\N_{K_6/k}(\al)$ square in $k$'' should be replaced by what we 
have written, i.e., simply ``$\al$ of square norm'', in other words $\NO(\al)$ 
square in $\Q^*$.}

\item We will say that a quadratic extension 
$K_6/k$ has \emph{trivial norm} if there exists $\al\in k^*\setminus {k^*}^2$ 
of square norm such that $K_6=k(\sqrt{\al})$. (Observe for $k$ cubic that this implies $\alpha \not \in \Q$.)
\end{enumerate}\end{definition}

Note that if the principal ideal $(\al)$ has square
norm then $\al$ has either square norm or minus square norm, but
since we will only be considering such elements in \emph{cubic} fields,
this means that $\pm\al$ has square norm for a suitable sign.

It is fundamental to our efforts that quartic fields $K$ with cubic resolvent 
$k$ correspond to quadratic extensions $K_6/k$ of trivial norm.
We review this correspondence here.

\begin{theorem}\label{thm_quartic_corr} 
There is a correspondence between isomorphism classes of $A_4$ or
$S_4$-quartic fields $K$, and pairs $(k, K_6)$, where $k$ is the cubic 
resolvent field of $K$, and $K_6/k$ is a quadratic extension of trivial
norm. Under this correspondence we have $\Disc(K) = \Disc(k) \N(\mfd(K_6/k))$.

If $K$ is an $S_4$-field then this correspondence is a bijection, and $K_6$ is
equal to the unique extension of $k$ with $\Gal(\widetilde{K}/K_6) \simeq C_4$.
If $K$ is an $A_4$-field, then $k$ has three quadratic extensions, given by 
adjoining a root of $\al$ or either of its nontrivial conjugates, and this 
correspondence is $1$-to-$3$, with any of these fields yielding the same $K$
(up to isomorphism).
\end{theorem}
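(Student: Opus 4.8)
The plan is to translate the entire statement into Galois theory inside the common Galois closure $\widetilde{K}$, and then to isolate the square--norm (``trivial norm'') condition, which is where the real content lies.

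First I would fix the subgroup dictionary. Writing $G=\Gal(\widetilde{K}/\Q)$, the quartic field $K$ is the fixed field of a point stabilizer, namely $S_3\subset S_4$ (resp.\ $C_3\subset A_4$). The normal subgroup $V_4\trianglelefteq G$ has quotient $S_3$ (resp.\ $C_3$), and its fixed field is the Galois closure $\widetilde{k}$ of the cubic resolvent (which in the $A_4$ case equals $k$, since $k$ is then cyclic). The cubic resolvent $k$ itself is $\widetilde{K}^{D_4}$ (resp.\ $\widetilde{K}^{V_4}$), where $D_4$ is a Sylow $2$-subgroup of $S_4$ containing $V_4$. I would then set $K_6=\widetilde{K}^{C_4}$ in the $S_4$ case, with $C_4$ the unique cyclic order-$4$ subgroup of that $D_4$, and $K_6=\widetilde{K}^{C_2}$ in the $A_4$ case, with $C_2$ one of the three order-$2$ subgroups of $V_4$. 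Reading off indices gives $[K_6:k]=[D_4:C_4]=2$ (resp.\ $[V_4:C_2]=2$), so $K_6/k$ is quadratic, and $\Gal(\widetilde{K}/K_6)\isom C_4$ in the $S_4$ case as claimed; by Kummer theory (the relevant roots of unity already lie in $k$) I may write $K_6=k(\sqrt{\alpha})$ with $\alpha\in k^*\setminus{k^*}^2$, well defined modulo ${k^*}^2$.

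The crux is showing $\NO(\alpha)$ is a square. Here I would pass to $\widetilde{k}$ and use that $\widetilde{K}/\widetilde{k}$ is a $V_4$-extension whose three quadratic subextensions are exactly $\widetilde{k}(\sqrt{\alpha^{(i)}})$, where $\alpha=\alpha^{(1)},\alpha^{(2)},\alpha^{(3)}$ are the conjugates of $\alpha$ (these correspond to the three order-$2$ subgroups of $V_4$ and, in the $S_4$ case, to the three conjugates of $C_4$, whose normalizer $D_4$ is the stabilizer of $k$). The defining relation of a $V_4$-extension, $\sqrt{\alpha^{(1)}}\sqrt{\alpha^{(2)}}\in\widetilde{k}^*\sqrt{\alpha^{(3)}}$, forces $\NO(\alpha)=\alpha^{(1)}\alpha^{(2)}\alpha^{(3)}\in(\widetilde{k}^*)^2\cap\Q^*$. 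Since the only quadratic subfield of $\widetilde{k}$ is $\Q(\sqrt{\Disc(k)})$, this already gives $\NO(\alpha)\in(\Q^*)^2\cup\Disc(k)(\Q^*)^2$. Ruling out the second possibility is the main obstacle: I would track the sign by which $G/V_4$ acts on the element $\sqrt{\NO(\alpha)}=\prod_i\sqrt{\alpha^{(i)}}$ of $\widetilde{K}$, and show that an odd element of $S_4$ (resp.\ a generator of $C_3$) fixes rather than negates it, so that $\sqrt{\NO(\alpha)}$ is $G$-fixed and hence lies in $\Q$. This sign computation is exactly where the hypothesis that $G$ is $A_4$ or $S_4$ (rather than $D_4$, $C_4$, or $V_4$) enters, and it is the delicate bookkeeping distinguishing trivial norm from the ``discriminant-times-square'' alternative.

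For the converse and the bijectivity, I would run the construction backwards. Starting from $(k,K_6=k(\sqrt{\alpha}))$ with $\NO(\alpha)$ square, I would form the conjugate extensions $\widetilde{k}(\sqrt{\alpha^{(i)}})$; the square-norm hypothesis forces $\sqrt{\alpha^{(3)}}\in\widetilde{k}(\sqrt{\alpha^{(1)}},\sqrt{\alpha^{(2)}})$, so their compositum $N$ is a $V_4$-extension of $\widetilde{k}$, Galois over $\Q$, on which $\Gal(\widetilde{k}/\Q)$ acts by the nontrivial permutation of the three subextensions, pinning $\Gal(N/\Q)$ down to $S_4$ (resp.\ $A_4$); I recover $K$ as the fixed field of a point stabilizer. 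That the two constructions are mutually inverse is then formal. The $1$-to-$1$ versus $1$-to-$3$ dichotomy falls out of the group theory: in the $S_4$ case the normalizer of $C_4$ is the stabilizer $D_4$ of $k$, so $K_6$ is canonically determined by $K$; in the $A_4$ case the three order-$2$ subgroups of $V_4$ are cyclically permuted by $C_3=\Aut(k/\Q)$, so the single field $K$ yields the three pairs obtained from $\alpha$ and its two nontrivial conjugates. Finally, the identity $\Disc(K)=\Disc(k)\,\N(\mfd(K_6/k))$ I would obtain from the conductor--discriminant formula applied to the tower $\Q\subset k\subset K_6\subset\widetilde{K}$, using that the three conjugate quadratic characters share the relative conductor $\mfd(K_6/k)$, and identifying $f(K)^2$ with $\N(\mfd(K_6/k))$; this step is routine compared with the square-norm analysis.
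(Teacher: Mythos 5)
Your proposal diverges from the paper's proof at one architectural point, and that is exactly where it breaks. The paper never identifies \emph{which} subgroup of $D_4=\Gal(\widetilde{K}/k)$ fixes $K_6$: it lists the three quadratic subextensions of $\widetilde{K}/k$ as $k(\sqrt{D})$, $k(\sqrt{\al})$, $k(\sqrt{\al D})$, notes that $\Q(\sqrt{\NO(\al)})\subseteq\widetilde{K}$ must equal $\Q$ or $\Q(\sqrt{D})$ because $S_4$ has a unique subgroup of order $12$, and then \emph{defines} $K_6$ to be whichever of $k(\sqrt{\al})$, $k(\sqrt{\al D})$ has a generator of square norm. You instead define $K_6:=\widetilde{K}^{C_4}$ and make ``this field has trivial norm'' the theorem. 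That claim is false, and your own proposed sign computation, carried out honestly, refutes it. Write $K_6=k(\sqrt{\al^{(1)}})$, let $g$ generate $C_4$, and set $\sigma_1=g^2$, the nontrivial element of $C_4\cap V_4$. Then $\sigma_1$ fixes $\sqrt{\al^{(1)}}\in K_6$ but must negate $\sqrt{\al^{(2)}}$ and $\sqrt{\al^{(3)}}$ (an element of $V_4$ fixing $\widetilde{k}$ and all three square roots fixes the Galois closure of $K_6$, i.e.\ all of $\widetilde{K}$, hence is trivial). Since $g$ fixes $\sqrt{\al^{(1)}}$ and interchanges $\sqrt{\al^{(2)}},\sqrt{\al^{(3)}}$ up to signs $\eps,\delta$, the relation $g^2=\sigma_1$ forces
$$\eps\delta\,\sqrt{\al^{(2)}}\;=\;g^2\bigl(\sqrt{\al^{(2)}}\bigr)\;=\;\sigma_1\bigl(\sqrt{\al^{(2)}}\bigr)\;=\;-\sqrt{\al^{(2)}}\;,$$
so $\eps\delta=-1$ and $g(P)=-P$ for $P=\prod_i\sqrt{\al^{(i)}}$. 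Thus $P\notin\Q$, $\Q(P)=\Q(\sqrt{D})$, and the generator of $\widetilde{K}^{C_4}$ has norm in $D\,(\Q^*)^2$: the fixed field of $C_4$ is precisely the extension of \emph{non}-trivial norm. The trivial-norm extension is the fixed field of the other index-$2$ subgroup of $D_4$, the non-normal Klein group $V_4'=\Stab_{S_4}(\{1,2\})$ (concretely $K_6=\Q(\eta_1+\eta_2)$, where $\eta_1,\dots,\eta_4$ are the roots of a defining polynomial of $K$). Indeed, for an involution $\tau\in V_4'\setminus V_4$ the same computation gives $\eps\delta=+1$ because $\tau^2=e$; the $3$-cycles send $P\mapsto\pm P$ and have odd order, so they fix $P$; and $P\in\widetilde{k}$ is fixed by $V_4$ by your own step $\NO(\al)\in(\widetilde{k}^*)^2$. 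These elements generate $S_4$, so $P\in\Q$ and $\NO(\al)$ is a square --- for $V_4'$, not for $C_4$.

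In fairness, the clause ``$\Gal(\widetilde{K}/K_6)\isom C_4$'' is erroneous in the theorem statement itself, and the corresponding sentence of the paper's proof is also wrong: $K_6(\sqrt{D})$ and $K_6\bigl(\sqrt{\al^{(2)}}+\sqrt{\al^{(3)}}\bigr)$ are two distinct quadratic extensions of $K_6$ inside $\widetilde{K}$ (the second contains the quartic field $K$, the first does not), so $\Gal(\widetilde{K}/K_6)$ is Klein, not cyclic. But in the paper this clause is decorative: nothing downstream uses it, and the Artin relation \eqref{eqn_artin} that the paper actually relies on is the induced-character identity valid precisely for the $V_4'$-fixed field (for the $C_4$-fixed field one gets the twist of the standard character by $\sgn$, and \eqref{eqn_artin} fails). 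In your proposal, by contrast, the identification is load-bearing twice over: the square-norm argument above, and also your conductor--discriminant derivation of $\Disc(K)=\Disc(k)\N(\mfd(K_6/k))$, which needs $\Disc(K_6)=\Disc(k)\Disc(K)$ and hence again the $V_4'$ identification. The repair is simple --- replace $C_4$ by $V_4'$ (equivalently, characterize $K_6$ as the subextension $\neq k(\sqrt{D})$ whose Galois group over $\widetilde{K}$ is Klein), after which your route is correct and genuinely different from the paper's, arguably cleaner since the involution-versus-$4$-cycle dichotomy isolates exactly why the square-norm condition singles out one of the two candidate sextic fields. As written, however, the central step proves the opposite of what you assert.
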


When we apply this theorem to quartic fields $L \in \LL_2(k)$ we still denote 
the corresponding sextic field by $K_6$.

\begin{remark}
Theorem \ref{thm_quartic_corr} has rough parallels in the theory of prehomogeneous vector spaces, for example in 
Bhargava's work on `higher composition laws' \cite{B2, B3}.
Roughly speaking, Bhargava proves that the sets $(R, I)$, where $R$ is a cubic ring and $I$ is an
ideal of $R$ whose square is principal, and $(Q, R)$, where $Q$ is a quartic ring and $R$ is a cubic resolvent ring of $Q$, are parameterized
by group actions on lattices which are $\Z$-dual to one another. 

The analogy is not exact, but as class field theory connects quadratic extensions $K_6/k$ to
index two subgroups of $\Cl(k)$, we can see a parallel to Bhargava's and related work.
\end{remark}

\begin{proof}
This is well known, but for the sake of completeness we sketch a proof.

For an $A_4$-quartic field $K$, $\widetilde{K}$ contains 
a unique $2$-Sylow subgroup, and therefore $\widetilde{K}$ contains a unique 
cubic subfield $k$, which must be cyclic. $\widetilde{K}$ also
contains three sextic fields; writing $K_6 = k(\sqrt{\al})$ for one of them,
the other two are $k(\sqrt{\al'})$ for the two conjugates $\al'$ of $\al$,
so $\widetilde{K}$ contains $\Q(\sqrt{\NO(\al)})$. However, since $A_4$ has
no subgroup of order $6$ this cannot be a quadratic extension, so $\al$ must
have square norm.

Conversely, given $k$ and $K_6 = k(\sqrt{\al})$, one obtains 
$\widetilde{K}$ by adjoining square roots of the conjugates of $\al$, and 
checks that $\widetilde{K}$ is Galois over $\Q$ with Galois group $A_4$. There
are four isomorphic quartic subextensions $K$, corresponding to the $3$-Sylow
subgroups of $A_4$. This proves the correspondence for $A_4$-extensions, and 
any of the quadratic extensions of $k$ produce the same quartic field $K$ (up 
to isomorphism).

For an $S_4$-quartic field $K$, $\widetilde{K}$ contains three conjugate 
$2$-Sylow subgroups, corresponding to three conjugate noncyclic cubic fields 
$k$, with Galois closure $k(\sqrt{D})$ (where $D := \Disc(k)$), 
$\Gal(\widetilde{K}/k(\sqrt{D})) \isom V_4=C_2\times C_2$, and 
$\Gal(\widetilde{K}/k) \isom D_4$. Since $D_4$ contains three subgroups of
size $4$, there exist three quadratic subextensions of $\widetilde{K}/k$, and 
since $S_4$ has a unique subgroup of order $12$, corresponding to
$\Q(\sqrt{D})$, and $\sqrt{D}\notin k$, these subextensions are $k(\sqrt{D})$,
$k(\sqrt{\al})$, and $k(\sqrt{\al D})$ for some $\al\in k^*$.

Now if we denote by $\al'$ and $\al''$ the nontrivial conjugates of $\al$
and by $k'$, $k''$ the corresponding conjugate fields of $k$, 
the fields $k'(\sqrt{\al'})$ and $k''(\sqrt{\al''})$ are in $\widetilde{K}$,
hence $\Q(\sqrt{\N(\al)})$ is also. As above, since $S_4$ has a unique subgroup
of order $12$, either this is equal to $\Q$, in which case $\al$ has
square norm, or it is equal to $\Q(\sqrt{D})$, in which case 
$\N(\al)=Da^2$ hence $\N(\al D)=D^4a^2$, so $\al D$ has square norm.

Conversely, given a noncyclic $k$ and $K_6 = k(\sqrt{\al})$ with $\al$ 
of square norm, one also adjoins $\sqrt{D}$, $\sqrt{\al D}$, and 
$\sqrt{\al'}$ for a conjugate $\al'$ of $\al$, and checks that the 
resulting field contains a square root of the remaining conjugate of $\al$ 
and is Galois over $\Q$ with Galois group $S_4$. One also checks that 
$\widetilde{K}$ only contains one quadratic extension of $K_6$, so that
$\Gal(\widetilde{K}/K_6) \simeq C_4$, and that there are four quartic 
subextensions $K$ of $\widetilde{K}$ which are all isomorphic.

\end{proof}

Since $K_6/k$ has trivial norm, there exists a positive integer $f$ such 
that $\N(\d(K_6/k))=f^2$, and we will write $f=f(K)$. Thus, if we denote by 
$\FF(k)$ the set of isomorphism classes of quartic extensions whose cubic 
resolvent is isomorphic to $k$ we have
$$\sum_{K\in\FF(k)}\dfrac{1}{|\Disc(K)|^s}=\dfrac{1}{|\Disc(k)|^s}\sum_{K\in\FF(k)}\dfrac{1}{f(K)^{2s}}\;.$$


The following proposition makes the correspondence of Theorem \ref{thm_quartic_corr} 
computationally explicit,
which helped us to check numerically the correctness of our formulas.

\begin{proposition}\label{propresal} \hfill
\begin{enumerate}[(1)]
\item A defining polynomial for the cubic resolvent field of the quartic 
field defined by the polynomial $x^4+a_3x^3+a_2x^2+a_1x+a_0$ is given by
$$x^3-a_2x^2+(a_1a_3-4a_0)x+4a_0a_2-a_1^2-a_0a_3^2\;,$$
whose (polynomial) discriminant is the same as the (polynomial) discriminant
of the quartic.
\item If $K_6=k(\sqrt{\al})$ with $\al\in k^*\setminus {k^*}^2$ of square norm
with characteristic polynomial $x^3+a_2x^2+a_1x+a_0$, a defining polynomial
for the corresponding quartic field is given by
$$x^4+2a_2x^2-8\sqrt{-a_0}x+a_2^2-4a_1\;,$$
whose (polynomial) discriminant is $2^{12}$ times the (polynomial) discriminant
of the cubic.
\end{enumerate}\end{proposition}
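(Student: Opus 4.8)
The two parts are essentially elementary verifications in the classical invariant theory of the quartic, so the plan is to treat them as explicit computations, being careful about the distinction between the (polynomial) discriminant and the field discriminant. For part (1), the plan is to start with the quartic $f(x) = x^4 + a_3 x^3 + a_2 x^2 + a_1 x + a_0$ with roots $\theta_1, \theta_2, \theta_3, \theta_4$, and to recall that the cubic resolvent is the degree-three polynomial whose roots are the three partial sums $\theta_1\theta_2 + \theta_3\theta_4$, $\theta_1\theta_3 + \theta_2\theta_4$, $\theta_1\theta_4 + \theta_2\theta_3$ permuted by $S_4$ acting through its quotient $S_3$. First I would express the elementary symmetric functions of these three quantities in terms of the $a_i$ via Newton's identities, obtaining the claimed coefficients $-a_2$, $a_1 a_3 - 4a_0$, and $4a_0 a_2 - a_1^2 - a_0 a_3^2$; this is a direct symmetric-function computation. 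The equality of the two (polynomial) discriminants then follows because the standard resolvent cubic has discriminant equal to that of the quartic — this is a classical identity, but one can also see it by noting that the differences of the resolvent roots are, up to sign, the products $(\theta_i - \theta_j)(\theta_k - \theta_l)$ over the three ways of pairing the four roots, whose overall product squared reproduces $\prod_{i<j}(\theta_i - \theta_j)^2$.

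For part (2), the plan is to work directly from the parametrization of Theorem \ref{thm_quartic_corr}. Given $\alpha \in k^*$ of square norm with characteristic polynomial $x^3 + a_2 x^2 + a_1 x + a_0$, so that $\mathcal{N}(\alpha) = -a_0$ is (up to the sign convention discussed after Definition \ref{deftriv}) a square, I would set $m = \sqrt{-a_0} = \sqrt{\mathcal{N}(\alpha)} \in \Q$. The quartic field $K$ attached to $(k, k(\sqrt{\alpha}))$ is built inside $\widetilde{K}$ by adjoining $\sqrt{\alpha}, \sqrt{\alpha'}, \sqrt{\alpha''}$ (with $\sqrt{\alpha}\sqrt{\alpha'}\sqrt{\alpha''} = \pm m \in \Q$ forcing descent), and a primitive element for $K$ is a suitable $\Q$-linear combination such as $\beta = \sqrt{\alpha} + \sqrt{\alpha'} + \sqrt{\alpha''}$. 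I would then compute the minimal polynomial of $\beta$ by forming $\beta^2$, $\beta^4$ and using the symmetric functions $\sum \alpha^{(i)} = -a_2$, $\sum_{i<j}\alpha^{(i)}\alpha^{(j)} = a_1$, $\prod \alpha^{(i)} = -a_0$ together with $\prod \sqrt{\alpha^{(i)}} = m = \sqrt{-a_0}$; the cross terms involving $\sqrt{\alpha^{(i)}\alpha^{(j)}}$ must be organized so that they collapse, which is what produces the single linear term $-8\sqrt{-a_0}\,x$ and the constant $a_2^2 - 4a_1$ in $x^4 + 2a_2 x^2 - 8\sqrt{-a_0}\,x + a_2^2 - 4a_1$.

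Finally, for the discriminant claim in part (2), the plan is to compute the (polynomial) discriminant of the resulting quartic directly — either by the general depressed-quartic discriminant formula (the polynomial has no $x^3$ term) or by taking the product of squared root differences of $\beta$ — and to compare it against the discriminant $\Disc(x^3 + a_2 x^2 + a_1 x + a_0)$, verifying the factor $2^{12}$. The main obstacle I anticipate is not any single deep step but rather the bookkeeping in part (2): keeping the signs straight in $m = \sqrt{-a_0}$, correctly handling which conjugate square roots multiply to $+m$ versus $-m$, and ensuring that the mixed radical terms cancel to leave exactly the stated coefficients. This sign and cancellation management, rather than the symmetric-function manipulation itself, is where an error is most likely to creep in, so I would double-check it by specializing to a numerical example (which is precisely the use the authors indicate, namely checking the formulas numerically). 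The equality of polynomial discriminants in part (1), and the $2^{12}$ factor in part (2), then serve as an internal consistency check that the symmetric-function identities have been applied correctly.
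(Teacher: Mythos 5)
Your proposal follows essentially the same route as the paper: part (1) is the classical resolvent cubic whose roots are $\theta_1\theta_2+\theta_3\theta_4$ and its two conjugates, and part (2) takes $\eta=\sqrt{\alpha}+\sqrt{\alpha'}+\sqrt{\alpha''}$ with the square roots normalized so that their product is $\sqrt{-a_0}=\sqrt{\N(\alpha)}$, then computes its characteristic polynomial via the symmetric functions of $\alpha,\alpha',\alpha''$. The one step you assert rather than prove (that $\eta$ is a primitive element, equivalently that the quartic polynomial obtained is irreducible) is handled in the paper by a short group-theoretic argument: the stabilizer of $\eta$ in $\Gal(\widetilde{K}/\Q)\simeq S_4$ contains a subgroup isomorphic to $S_3$ but is not all of $S_4$, and since no order-$12$ subgroup of $S_4$ contains an $S_3$, the stabilizer is exactly that $S_3$, so $[\Q(\eta):\Q]=4$.
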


\begin{proof} (1). This is well-known: if $(\al_i)$ are the four roots of the 
quartic, the cubic is the characteristic polynomial of $\al_1\al_2+\al_3\al_4$.

\smallskip

(2). Assume that we are in the $S_4$ case, the $A_4$ case being simpler.
If we denote as usual by $\al'$ and $\al''$ the conjugates of $\al$,
then $\th=\sqrt{\al}$, $\th'=\sqrt{\al'}$, and $\th''=\sqrt{\al''}$ belong to 
$\widetilde{K}$, and we choose the square roots so that
$\th\th'\th''=\sqrt{\N(\al)}=\sqrt{-a_0}$. If we set $\eta=\th+\th'+\th''$,
by Galois theory it is clear that $\eta$ belongs to a quartic field, and
more precisely the four conjugates of $\eta$ are 
$\eps\th+\eps'\th'+\eps''\th''$ with the $\eps=\pm1$ such that 
$\eps\eps'\eps''=1$, and a small computation shows that the characteristic
polynomial of $\eta$ is the one given in the proposition. This polynomial
must be irreducible, because $\eta$ is fixed by a subgroup of $\Gal(\widetilde{K}/\Q)$ isomorphic
to $S_3$, but not by all of $\Gal(\widetilde{K}/\Q) \simeq S_4$, and there are
no intermediate subgroups of $S_4$ of order $12$.
\end{proof}



\section{The Main Theorem of \cite{Coh_a4s4}}\label{sec:CDO}

\subsection{Statement of the main theorem}

To prove our main result (Theorem \ref{thm_main}), we begin by stating and
proving a similar result involving sums over characters of certain ray class
groups instead of over quartic fields. This result has been stated without
proof in \cite{Coh_a4s4} and proved in the unpublished preprint
\cite{CDO_quartic}, so we give a complete proof here.

\begin{definition}\label{def_zkc}
For each ideal $\c\mid2\Z_k$ we define the following quantities:

\begin{enumerate}[(1)]
\item
We define a finite group $C_{\c^2}$ by\footnote{Note that there is a misprint in Definition 2.2 of \cite{Coh_a4s4}, the condition $\be\equiv1\pmod{^*\c^2}$ having been omitted from the denominator.}
$$C_{\c^2}=\dfrac{\{\a/\ (\a,\c)=1,\ \N(\a)\text{ square}\}}{\{\q^2\be/\ (\q^2\be,\c)=1,\ \be\equiv1\pmod{^*\c^2},\ \N(\be)\text{ square}\}},$$
and we define $X_{\c^2}$ to be the
group of characters 
$\chi\in C_{\c^2}$, extended to all ideals of square norm by setting
$\chi(\a)=0$ if $\a$ is not coprime to $\c$.
\item
We define $z_k(\mfc)$ to be equal to 1 or 2, with $z_k(\mfc) = 2$ if and only 
if we are in one of the following cases.
\begin{itemize}
\item We have $\mfc = 2 \Z_k$.
\item The prime $2$ splits as $2 \Z_k = \mfp_1^2 \mfp_2$ and 
$\mfc = \mfp_1 \mfp_2$.
\item The prime $2$ splits as $2 \Z_k = \mfp_1^2 \mfp_2$, $\mfc = \mfp_2$, 
and $\Disc(k) \equiv 4 \pmod 8.$
\item The prime $2$ splits as $2 \Z_k = \mfp_1^3$ and $\mfc = \mfp_1^2$.
\end{itemize}
\end{enumerate}
\end{definition}

Since trivially $C_{\c^2}$ has exponent dividing $2$, all the elements of 
$X_{\c^2}$ are quadratic characters, which can be applied only on ideals of 
square norm. With this definition, the main result of \cite{Coh_a4s4} is the 
following:

\begin{theorem}\label{thma4s4}\cite{Coh_a4s4, CDO_quartic}
$$\Phi_k(s)=\dfrac{2^{2-r_2(k)}}{a(k)2^{3s}}\sum_{\c\mid2\Z_k}\N\c^{s-1}z_k(\c)\prod_{\p\mid\c}\left(1-\dfrac{1}{\N\p^s}\right)\sum_{\chi\in X_{\c^2}}F_k(\chi,s)\;,$$
where $r_2(k)$ is half the number of complex places of $k$,
\begin{equation}\label{eqn_a4s4_euler}
F_k(\chi,s)=
\prod_{p\Z_k=\p_1\p_2}\left(1+\dfrac{\chi(\p_2)}{p^s}\right)\prod_{p\Z_k=\p_1^2\p_2}\left(1+\dfrac{\chi(\p_1\p_2)}{p^s}\right)
\prod_{p\Z_k=\p_1\p_2\p_3}\left(1+\dfrac{\chi(\p_1\p_2)+\chi(\p_1\p_3)+\chi(\p_2\p_3)}{p^s}\right)\;,\end{equation}
where in the product over $p\Z_k=\p_1\p_2$ it is understood that $\p_1$ has
degree $1$ and $\p_2$ has degree $2$.
\end{theorem}

\begin{remark} In \cite{CDO_quartic} this is proved for relative quartic extensions of any number field.
Proving Theorem \ref{thm_main} in this generality may be possible, but this
would involve additional technical complications.
Here we prove Theorem \ref{thma4s4} only for quartic extensions of $\Q$, allowing
some simplifications of the arguments in \cite{CDO_quartic}.
\end{remark}

Our first goal is the proof of this theorem, which may be summarized as follows.
By Theorem \ref{thm_quartic_corr}, it is enough to count quadratic extensions
$K_6/k$ of trivial norm. Our first few results parameterize such quadratic extensions
and allow us to compute their discriminants, and these together with some elementary computations
lead us to the preliminary result
of Corollary \ref{corphik}. To proceed further we use local class field theory to 
study a certain quantity $|S_{\c^2}[N]|/|C_{\c^2}|$
appearing in the corollary, allowing us to obtain the more explicit formula above. 

Later we will further refine this theorem and show that the groups $C_{\c^2}$ are isomorphic to certain ray
class groups. This will allow us to regard characters of $C_{\c^2}$ as
characters of Galois groups of quadratic extensions of $k$, allowing us to express the Euler products
above in terms of splitting of prime ideals in certain quartic fields.

\subsection{Proof of Theorem \ref{thma4s4}: Hecke, Galois, and Kummer Theory}\label{sec_CDO}

\begin{remark} Much of this section has been taken nearly verbatim from 
the unpublished preprint \cite{CDO_quartic}, and we thank the second and
third authors of that preprint for permission to include their results here.
\end{remark}
 
We begin by recalling the following (easy) special case of a theorem of Hecke
on Kummer extensions (see for example Theorem 10.2.9 of \cite{Coh}).

\begin{proposition}\label{prop:hec} Let $k$ be a number field and
$K_6=k(\sqrt{\al})$ be a quadratic extension of $k$. 
Write $\al\Z_k=\a\q^2$ where $\a$ is an integral squarefree ideal of $k$, and 
assume $\al$ is chosen so that $\q$ is coprime to $2$ (which can easily be done 
without changing the field $K_6$ by replacing $\q$ by $\ga\q$ for a suitable
element $\ga\in k^*$). 

Then the relative ideal discriminant of $K_6/k$ is 
given by the formula $\gd(K_6/k)=4\a/\c^2$, where $\c$ is the largest integral
ideal of $k$ (for divisibility) dividing $2\Z_k$, coprime to $\a$, and such 
that the congruence $x^2\equiv\al\pmod{^*\c^2}$ has a solution in $k$,
where the $^*$ has the usual multiplicative meaning of class field theory.
\end{proposition}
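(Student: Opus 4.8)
The plan is to prove this by local analysis at each prime dividing $2$, since away from $2$ the computation of $\gd(K_6/k)$ is entirely standard Kummer theory. First I would recall the clean global statement: for a quadratic extension $k(\sqrt{\al})/k$, writing $\al\Z_k = \a\q^2$ with $\a$ squarefree, the only primes that can ramify are those dividing $\a$ together with those dividing $2$. At any prime $\p \nmid 2$, ramification in $k(\sqrt{\al})$ is tame, so $\p$ ramifies if and only if $\p \mid \a$ (the exponent of $\al$ at $\p$ is odd precisely for $\p \mid \a$), and in that case $\p$ contributes exactly $\p^1$ to the discriminant. This already accounts for the factor $\a$ appearing in $4\a/\c^2$, and shows that away from $2$ there is nothing to correct. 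The entire content of the formula is therefore concentrated at the primes above $2$.

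Next I would turn to the primes $\p \mid 2$, which by hypothesis are coprime to $\q$, so $\al$ is a $\p$-adic unit times nothing awkward at those primes, i.e.\ $v_\p(\al)$ is even (in fact $\p \nmid \a$ here, since $\a$ is coprime to $\q$ and squarefree, but $\p$ could divide $\a$). The key local principle is that for a quadratic extension $k_\p(\sqrt{u})/k_\p$ with $u$ a unit (after absorbing even powers of a uniformizer), the extension is unramified precisely when $u$ is a square in the residue field direction, and more finely, the \emph{conductor–discriminant} exponent is governed by how far $\al$ is from being a square modulo powers of $\p$. This is exactly what the condition ``$x^2 \equiv \al \pmod{^*\c^2}$ has a solution'' measures: the largest $\c \mid 2\Z_k$ for which $\al$ becomes a square in the ray-class sense modulo $^*\c^2$ detects the precise depth of ramification above $2$. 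I would make this rigorous by invoking the conductor formula for Kummer (Artin–Schreier–Witt in the wild case, but here just quadratic) extensions: the local different exponent at $\p \mid 2$ equals $2 - 2v_\p(\c)$ relative to the ``worst case'' exponent $2$ coming from the factor $4 = 2^2$, which is precisely how $4\a/\c^2$ encodes a uniform statement across all $\p \mid 2$.

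The cleanest route is to cite the general Hecke/Kummer conductor computation (the reference to Theorem 10.2.9 of \cite{Coh} is exactly this), and then verify that the abstract conductor there, expressed via the solvability of $x^2 \equiv \al \pmod{^*\c^2}$, matches $4\a/\c^2$. Concretely, I would argue that the $2$-part of $\gd(K_6/k)$ is $4\Z_k/\c^2$ where $\c$ is maximal with the stated square-solvability property: the factor $4 = 2^2$ is the maximal possible wild contribution of the primes above $2$ to a quadratic extension, and dividing by $\c^2$ reduces the exponent at each $\p \mid \c$ by $2v_\p(\c)$, reflecting that $\al$ is a square to depth $\c^2$ and hence the extension is ``that much less ramified'' there. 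Combining the tame part $\a$ away from $2$ with this $2$-part $4/\c^2$, and noting $\c$ is coprime to $\a$ by construction, yields $\gd(K_6/k) = 4\a/\c^2$.

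The main obstacle I expect is the precise bookkeeping of the local conductor exponents at the wildly ramified primes $\p \mid 2$: translating the multiplicative congruence condition $x^2 \equiv \al \pmod{^*\c^2}$ into an exact different/conductor exponent requires the higher-unit filtration and the local quadratic-symbol computation, and getting the exponent $2v_\p(\c)$ exactly right (rather than off by a factor) is the delicate point. Everything tame is routine; the genuine work is confirming that the conductor of the quadratic character cutting out $k_\p(\sqrt{\al})$ is $\p^{2 - 2v_\p(\c)}$, which is where I would lean most heavily on the cited form of Hecke's theorem rather than reprove the local class field theory from scratch.
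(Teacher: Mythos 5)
The paper does not actually prove this proposition: it is recalled as an ``easy special case of a theorem of Hecke on Kummer extensions,'' with the proof deferred to Theorem 10.2.9 of \cite{Coh}. Your proposal---tame bookkeeping away from $2$, plus the wild conductor computation at primes above $2$ deferred to that same reference---is therefore essentially the same route the paper takes, and it is acceptable.

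Two slips in your elaboration are worth fixing, even though they do not affect the conclusion (which you ultimately quote from the citation rather than derive). First, your parenthetical claim that $v_\p(\al)$ is even for $\p\mid 2$ is not correct: the hypothesis only forces $\q$ to be coprime to $2$, so $\a$ may well contain primes above $2$, and at such a prime (necessarily coprime to $\c$, since $\c$ is chosen coprime to $\a$) the discriminant exponent is $2v_\p(2)+1$. This odd-valuation wild case is not covered by your local principle, which reduces to a unit ``after absorbing even powers of a uniformizer''---no such reduction exists when $v_\p(\al)$ is odd. Second, your stated exponent $2-2v_\p(\c)$ at $\p\mid 2$ should read $2v_\p(2)-2v_\p(\c)$, since $2$ may itself be ramified in $k$; the factor $4=2^2$ contributes $2v_\p(2)$ to the exponent at $\p$, not $2$.
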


\begin{definition} Let $k$ be a number field.\begin{enumerate}
\item We say that an element $\al\in k^*$ is a $2$-virtual
unit if $\al\Z_k=\q^2$ for some ideal $\q$ of $k$, or equivalently, if
$v_{\p}(\al)\equiv0\pmod{2}$ for all prime ideals $\p$, and we denote by
$V(k)$ the group of virtual units.
\item We define $V[N]$ as the subgroup of elements of $V(k)$ having
square (or equivalently, positive) norm.
\item We define $V^+(k)$ as the subgroup of totally positive 
virtual units, so that $V^+(k)\subset V[N]$.
\item We define the $2$-Selmer group $S(k)$ of $k$ as 
$S(k)=V(k)/{k^*}^2$, and similarly $S[N]=V[N]/{k^*}^2\subset S(k)$,
and $S^+(k)=V^+(k)/{k^*}^2\subset S[N]$.
\end{enumerate}\end{definition}

\begin{remarks} \hfill
\begin{itemize}
\item We should more properly speaking write $V_2(k)$, $S_2(k)$, etc...,
but in this paper we only consider $2$-virtual units.
\item If $k$ is a cubic field (or more generally a field of odd degree), which
will be the case in this paper, and $\al\in V(k)$, then either $\al$ or
$-\al$ belongs to $V[N]$.
\item If $k$ is a complex cubic field, we have $V^+(k)=V[N]$ and
$S^+(k)=S[N]$, since here totally positive means positive for the unique
real embedding.\end{itemize}\end{remarks}

\begin{lemma}\label{lem_square} The relative discriminant $\gd(K_6/k)$ of a 
quadratic extension $K_6/k$ divides $(4)$ if and only if $K_6=k(\sqrt{\al})$ 
for some $\al\in V(k)$. If this is the case then $\gd(K_6/k)$ is a square,
and $K_6/k$ is unramified at infinity if and only if $\al\in V^+(k)$.
\end{lemma}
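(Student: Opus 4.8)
The plan is to prove the equivalence by directly analyzing the discriminant formula from Proposition \ref{prop:hec}. Write $K_6 = k(\sqrt{\al})$ and, following the setup of that proposition, decompose $\al\Z_k = \a\q^2$ with $\a$ squarefree and $\q$ coprime to $2$. The key observation is that $\gd(K_6/k) = 4\a/\c^2$ divides $(4)$ precisely when $\a = \Z_k$ (the unit ideal), since $\a$ is squarefree and coprime to $\c$, so $\c^2 \mid 4\Z_k$ forces $\a \mid 4\Z_k$, and a squarefree ideal coprime to the relevant part of $2\Z_k$ that also divides $4\Z_k$ must be trivial. First I would verify carefully that $\a = \Z_k$ is equivalent to $\gd(K_6/k) \mid (4)$, handling the role of $\c$ at the primes above $2$.

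Next, the condition $\a = \Z_k$ says exactly that $\al\Z_k = \q^2$ is a perfect square of an ideal, which by definition means $\al$ is a $2$-virtual unit, i.e.\ $\al \in V(k)$. One direction is then immediate: if $K_6 = k(\sqrt{\al})$ with $\al \in V(k)$, I may assume (by the remark in Proposition \ref{prop:hec} allowing replacement of $\al$ by $\ga^2\al$) that $\al$ has squarefree part $\a = \Z_k$, whence $\gd(K_6/k) = 4/\c^2 \mid (4)$. Conversely, if $\gd(K_6/k) \mid (4)$, then $\a = \Z_k$ and $\al$ is automatically a virtual unit. I would be careful that replacing $\al$ by $\ga^2\al$ changes neither the field $K_6$ nor membership in $V(k)$, so the two formulations are genuinely equivalent.

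For the ``square'' claim, once $\a = \Z_k$ we have $\gd(K_6/k) = 4\Z_k/\c^2$, and I would argue this is a square of an ideal. Since $\c \mid 2\Z_k$, its square divides $4\Z_k$, and the quotient $4\Z_k/\c^2 = (2\Z_k/\c)^2$ is visibly the square of the integral ideal $2\Z_k/\c$; here I should check that $\c \mid 2\Z_k$ (not merely $\c^2 \mid 4\Z_k$) so that $2\Z_k/\c$ is genuinely an integral ideal. This is where a small amount of care is needed: I must confirm from the definition of $\c$ in Proposition \ref{prop:hec} that $\c$ divides $2\Z_k$ rather than only controlling the congruence modulo $\c^2$.

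The statement about ramification at infinity follows from the standard description of ramification of $K_6/k = k(\sqrt{\al})$ at a real place $v$: the place $v$ ramifies exactly when $\al$ is negative under the corresponding real embedding. Thus $K_6/k$ is unramified at every infinite place iff $\al$ is totally positive, i.e.\ $\al \in V^+(k)$ given that we already know $\al \in V(k)$. I would phrase this using the infinite part of the conductor--discriminant setup and note that totally positivity of $\al$ is precisely the defining condition for $V^+(k)$. The main obstacle throughout is the bookkeeping at the primes above $2$ — pinning down exactly how $\c$ behaves and ensuring the identification $\gd(K_6/k) = (2\Z_k/\c)^2$ is correct — rather than any conceptual difficulty, since the infinite-place and virtual-unit parts are routine once the finite part is settled.
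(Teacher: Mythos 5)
Your proposal follows the paper's proof in structure and in substance: both rest on Hecke's formula $\gd(K_6/k)=4\a/\c^2$ from Proposition \ref{prop:hec}, the equivalence $\gd(K_6/k)\mid(4)\Leftrightarrow\a=\Z_k\Leftrightarrow\al\in V(k)$, the identity $4/\c^2=(2\Z_k/\c)^2$ for the square claim, and positivity of $\al$ at the real embeddings for the statement about infinite places. Your side worry about whether $\c\mid 2\Z_k$ is a non-issue: Proposition \ref{prop:hec} \emph{defines} $\c$ as an integral ideal dividing $2\Z_k$, so $2\Z_k/\c$ is integral by fiat.

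There is, however, a genuine flaw in the justification you sketch for the central step, exactly at the place you flagged for ``careful verification.'' You argue that $\gd(K_6/k)\mid(4)$ gives $\a\mid4\Z_k$, and then invoke the principle that ``a squarefree ideal coprime to the relevant part of $2\Z_k$ that also divides $4\Z_k$ must be trivial.'' That principle is false: $\a$ is only known to be coprime to $\c$, not to all primes above $2$. For instance, if $2\Z_k=\p_1\p_2\p_3$ and $\c=\p_3$, the ideal $\a=\p_1\p_2$ is squarefree, coprime to $\c$, and divides $4\Z_k$, yet is nontrivial; such $\a$ genuinely occur (this exact configuration, with $\gd(K_6/k)=(\p_1\p_2)^3$, appears in the proof of Theorem \ref{thmcrux} for fields in $\LL(k,64)$). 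The error is that passing from $\a\mid\c^2$ to the weaker $\a\mid4\Z_k$ discards the information that makes the argument work. The repair is immediate: comparing valuations prime by prime, $4\a/\c^2\mid(4)$ holds if and only if $\a\mid\c^2$, and since $(\a,\c)=1$ this forces $\a=\Z_k$. This one-line coprimality argument is precisely what the paper uses, and with that correction your proof is complete and coincides with the paper's.
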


\begin{proof} If $\al\in V(k)$ then $\al\Z_k=\q^2$, where as before we may choose $\q$ coprime to $2$,
so that $\gd(K_6/k)=4/\c^2$ by
Hecke. Conversely, let $K_6=k(\sqrt{\al})$, and write $\al\Z_k=\a\q^2$, again
with $\q$ coprime to $2$. Then $\gd(K_6/k)=4\a/\c^2$
by Hecke, with $\c$ coprime to $\a$, so that $\gd(K_6/k)\mid (4)$
if and only if $\a=\Z_k$, i.e., $\al\in V(k)$. For the last
statement, note that $K_6/k$ is unramified at infinity if and only if
$\al$ is totally positive.
\end{proof}

Note that by definition, if $L\in\LL_2(k)$ then $L$ is totally real if $k$ 
is so, hence if $K_6=k(\sqrt{\al})$ we have $\al\in V^+(k)$.

\smallskip

The classification of quadratic extensions of trivial norm (see Definition
\ref{deftriv}) is easily done as follows. 

\begin{proposition}\label{prop:kum} There is a one-to-one 
correspondence between on the one hand quadratic extensions of $k$ of trivial 
norm, together with the trivial extension $k/k$, and on the other hand pairs 
$(\a,\ov{u})$, where $\a$ is an integral, squarefree ideal of $k$ of square 
norm whose class modulo principal ideals is a square in the class group of 
$k$, and $\ov{u}\in S[N]$.
\end{proposition}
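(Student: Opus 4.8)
The plan is to set up the correspondence $(\a, \ov{u}) \mapsto K_6$ explicitly via Kummer theory, and then verify bijectivity by constructing an inverse. First I would recall that a quadratic extension $K_6/k$ is determined by a class in $k^*/{k^*}^2$, namely the class of the $\al$ with $K_6 = k(\sqrt{\al})$. The condition of \emph{trivial norm} (Definition \ref{deftriv}) means precisely that $\al$ has square norm, i.e.\ $\NO(\al) \in {\Q^*}^2$. So the quadratic extensions of trivial norm, together with the trivial extension, correspond to the subgroup of $k^*/{k^*}^2$ consisting of classes of elements of square norm. The task is to show this subgroup is in natural bijection with the set of pairs $(\a, \ov{u})$ described.

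Given $\al \in k^*$ of square norm, I would factor the principal ideal as $\al\Z_k = \a\q^2$ with $\a$ integral and squarefree, exactly as in Proposition \ref{prop:hec}. The ideal $\a$ is canonically determined by the class of $\al$ in $k^*/{k^*}^2$: replacing $\al$ by $\al\beta^2$ does not change $\a$. Since $\NO(\al)$ is a square and $\N(\q)^2$ is automatically a square, $\N(\a)$ must be a square, so $\a$ has square norm. Moreover $\al\Z_k = \a\q^2$ exhibits $\a$ as equal to $\q^{-2}(\al)$ in the class group, so the class of $\a$ is a square in $\Cl(k)$. This produces the ideal component $\a$ of the pair. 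For the second component, I would observe that once $\a$ is fixed, the remaining freedom in $\al$ is measured by a $2$-virtual unit: if $\al_1\Z_k = \a\q_1^2$ and $\al_2\Z_k = \a\q_2^2$ both have square norm, then $\al_1/\al_2$ generates a principal ideal that is a perfect square, hence $\al_1/\al_2 \in V(k)$, and having square norm forces $\al_1/\al_2 \in V[N]$. Its image $\ov{u} \in S[N] = V[N]/{k^*}^2$ is the second coordinate.

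To make this precise and symmetric, I would first \emph{choose}, for each admissible ideal $\a$ (integral, squarefree, square norm, square class in $\Cl(k)$), a fixed generator: since the class of $\a$ is a square, write $\a\q_\a^2 = (\al_\a)$ for some $\al_\a \in k^*$ and some ideal $\q_\a$; replacing $\al_\a$ by $-\al_\a$ if necessary (using that $k$ is a cubic field, so by the Remarks either $\al_\a$ or $-\al_\a$ has square norm), I may assume $\al_\a$ has square norm. Then the general element of square norm with invariant ideal $\a$ is $\al_\a \cdot u$ for $u \in V[N]$, well-defined modulo ${k^*}^2$. This gives the map $(\a, \ov{u}) \mapsto k(\sqrt{\al_\a u})$, and the preceding paragraph gives the inverse. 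I would check the trivial extension $k/k$ corresponds to $(\Z_k, \ov{1})$.

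The main obstacle I anticipate is the well-definedness and surjectivity bookkeeping around the choice of $\al_\a$, specifically confirming that every pair $(\a, \ov{u})$ in the stated range genuinely arises and that distinct pairs give distinct classes in $k^*/{k^*}^2$. Injectivity reduces to: if $\al_\a u_1$ and $\al_\a u_2$ differ by a square, then $u_1 \equiv u_2$ in $S[N]$, which is immediate. Surjectivity onto all trivial-norm extensions requires that the invariant ideal $\a$ of an arbitrary square-norm $\al$ lands in the prescribed set and that its square-class hypothesis is exactly captured — this is the one point needing the observation that $\a$ being a square in $\Cl(k)$ is both necessary (from $\al\Z_k = \a\q^2$) and sufficient (to find \emph{some} square-norm generator). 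The sign subtlety, that one passes freely between $\al$ and $-\al$ to secure square norm, is handled cleanly by the odd-degree remark and should not cause real difficulty.
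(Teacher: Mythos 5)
Your proposal is correct and follows essentially the same route as the paper: factor $\al\Z_k=\a\q^2$, fix for each admissible $\a$ a square-norm generator $\al_0(\a)$ via the odd-degree sign trick, and identify the fiber over $\a$ with $S[N]$. The paper merely packages the bookkeeping you verify by hand (well-definedness, injectivity, surjectivity) into the exact sequence $1\to S(k)\to k^*/{k^*}^2\to I(k)/I(k)^2\to\Cl(k)/\Cl(k)^2\to1$ before making the correspondence explicit in exactly your manner.
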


\begin{proof} The exact sequence
$$1\LR S(k)\LR\dfrac{k^*}{{k^*}^2}\LR\dfrac{I(k)}{I(k)^2}\LR\dfrac{\Cl(k)}{\Cl(k)^2}\LR1\;,$$
where as usual $I(k)$ is the group of nonzero fractional ideals of $k$, shows
the trivial fact that there is a one-to-one correspondence between quadratic
extensions (including $k/k$) and pairs $(\a,\overline{u})$ with $\a$
integral and squarefree whose class belongs to $\Cl^2(k)$ and 
$\overline{u}\in S(k)$, and the trivial norm condition is equivalent to the
restrictions on $(\a,\overline{u})$.

We can make the correspondence explicit as follows.
For each ideal $\a$ as above, choose arbitrarily an
ideal $\q_0=\q_0(\a)$ and an element $\al_0=\al_0(\a)$ such that 
$\a\q_0^2=\al_0\Z_k$. Since $k$ is a cubic field, by changing $\al_0$ into
$-\al_0$ if necessary we may assume that $\al_0$ has square norm.
Thus if $K_6=k(\sqrt{\al})$ with $\al$ of square norm and $\al\Z_k=\a\q^2$,
with $\a$ integral and squarefree, then $\al/\al_0(\a)\in V[N]$ and the 
corresponding pair is $(\a,\ov{\al/\al_0(\a)})$.
Conversely, for any pair $(\a,\ov{u})$ as above we take 
$K_6=k(\sqrt{\al_0(\a)u})$ for any lift $u$ of $\ov{u}$.
\end{proof}


We now begin the computation of $\Phi_k(s)$. Recall that to any $A_4$ or 
$S_4$-quartic field there correspond $a(k)$ extensions $K_6/k$ of trivial 
norm, where $a(k)=3$ in the $A_4$ case and $a(k)=1$ in the $S_4$ case,
so by definition
$$\Phi_k(2s)=\dfrac{1}{a(k)}+\sum_{K\in\FF(k)}\dfrac{1}{f(K)^{2s}}
=\dfrac{1}{a(k)}\sum_{K_6/k\text{ of trivial norm}}\dfrac{1}{\N(\d(K_6/k))^s}\;,$$
where it is understood that we include the trivial extension $k/k$. Thus,
if $(\a,\ov{u})$ is as in Proposition \ref{prop:kum} and
$K_6=k(\sqrt{\al_0(\a)u})$ is the corresponding quadratic extension, we know
from Proposition \ref{prop:hec} that $\gd(K_6/k)=4\a/\c^2$ for an
ideal $\c=\c(\a,\ov{u})$ described in the proposition. For ease of notation,
we let $\calA$ be the set of all ideals $\a$ as in Proposition \ref{prop:kum}
above, in other words, integral, squarefree ideals of $k$ of square norm 
whose class in $\Cl(k)$ belongs to $\Cl(k)^2$. 
We thus have
$$a(k)\Phi_k(2s)=\sum_{\a\in\calA}\sum_{\ov{u}\in S[N]}\dfrac{1}{\NO(\gd(k(\sqrt{\al_0(\a)u})/k))^s}=\dfrac{1}{4^{3s}}\sum_{\a\in\calA}\dfrac{1}{\N\a^s}\sum_{\ov{u}\in S[N]}\NO(\c(\a,\ov{u}))^{2s}\;,$$
in other words
$$a(k)\Phi_k(s)=\dfrac{1}{2^{3s}}\sum_{\a\in\calA}\dfrac{1}{\N\a^{s/2}}S(\a;s)\text{\quad with\quad}S(\a;s)=\sum_{\ov{u}\in S[N]}\NO(\c(\a,\ov{u}))^s=\sum_{\substack{\c\mid2\Z_k\\(\c,\a)=1}}\N\c^sT(\a,\c)\;,$$
where 
$T(\a,\c)=\left|\left\{\ov{u}\in S[N]/\ \c(\a,\ov{u})=\c\right\}\right|$.

\begin{definition} We set
$$f(\a,\c)=\left|\left\{\ov{u}\in S[N]/\ x^2\equiv\al_0u\pmod{^*\c^2}\text{ soluble}\right\}\right|\;.$$
\end{definition}

\begin{lemma}\label{lem_prelim} We have the preliminary formula
\begin{align*}a(k)\Phi_k(s) = \dfrac{1}{2^{3s}}\sum_{\c\mid2\Z_k}\N\c^s\prod_{\p\mid\c}\left(1-\N\p^{-s}\right)\sum_{\substack{\a\in\calA\\(\c,\a)=1}}\dfrac{1}{\N\a^{s/2}}f(\a,\c)\;.\end{align*}
\end{lemma}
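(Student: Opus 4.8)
The plan is to express the counting function $f(\a,\c)$ as a divisor sum of the quantities $T(\a,\c)$, invert this relation by Möbius inversion, and then reorganize the resulting sums. First I would unwind the definition of $\c(\a,\ov{u})$ from Proposition \ref{prop:hec}: for $\al=\al_0(\a)u$, the ideal $\c(\a,\ov{u})$ is the \emph{largest} integral ideal dividing $2\Z_k$, coprime to $\a$, for which $x^2\equiv\al\pmod{^*\c^2}$ is soluble. Solubility modulo $^*\c^2$ descends to solubility modulo $^*{\c''}^2$ for every divisor $\c''\mid\c$, so the set of admissible $\c$ is closed under divisibility and has $\c(\a,\ov{u})$ as its maximal element. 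Hence solubility of $x^2\equiv\al_0 u\pmod{^*\c^2}$ is equivalent to $\c\mid\c(\a,\ov{u})$, which immediately gives
\begin{equation*}
f(\a,\c)=\sum_{\substack{\c'\mid2\Z_k,\ \c\mid\c'\\(\c',\a)=1}}T(\a,\c')\;.
\end{equation*}

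Next I would Möbius-invert this relation over the finite divisor lattice of $2\Z_k$ coprime to $\a$, obtaining
\begin{equation*}
T(\a,\c)=\sum_{\substack{\c'\mid2\Z_k,\ \c\mid\c'\\(\c',\a)=1}}\mu(\c'/\c)\,f(\a,\c')\;,
\end{equation*}
where $\mu$ is the usual Möbius function on ideals. Substituting this into $S(\a;s)=\sum_{\c}\N\c^s\,T(\a,\c)$ and interchanging the order of summation, the inner sum over $\c\mid\c'$ evaluates, via the substitution $\mathfrak{e}=\c'/\c$, to
\begin{equation*}
\sum_{\c\mid\c'}\N\c^s\mu(\c'/\c)=\N\c'^s\sum_{\mathfrak{e}\mid\c'}\dfrac{\mu(\mathfrak{e})}{\N\mathfrak{e}^s}=\N\c'^s\prod_{\p\mid\c'}\left(1-\dfrac{1}{\N\p^s}\right)\;,
\end{equation*}
so that (renaming the dummy $\c'$ to $\c$) one has $S(\a;s)=\sum_{\c\mid2\Z_k,\ (\c,\a)=1}\N\c^s\prod_{\p\mid\c}\bigl(1-\N\p^{-s}\bigr)f(\a,\c)$.

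Finally I would insert this expression into the already-established identity $a(k)\Phi_k(s)=2^{-3s}\sum_{\a\in\calA}\N\a^{-s/2}S(\a;s)$ and interchange the order of the $\a$- and $\c$-summations; the coprimality condition $(\c,\a)=1$ is simply carried along onto the inner sum over $\a$, yielding the stated formula. I expect the only genuine subtlety to be the first step, namely justifying that solubility of $x^2\equiv\al_0u\pmod{^*\c^2}$ is exactly the divisibility $\c\mid\c(\a,\ov{u})$; this rests on the monotonicity of the $^*$-congruence together with the maximality clause of Proposition \ref{prop:hec}. Once that equivalence is in hand, the remaining steps are purely formal manipulations of finite divisor sums and Möbius inversion, with no analytic content.
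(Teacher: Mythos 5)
Your proposal is correct and takes essentially the same route as the paper's proof: the identity $f(\a,\c)=\sum_{\c\mid\c_1\mid 2\Z_k,\ (\c_1,\a)=1}T(\a,\c_1)$, dual M\"obius inversion over the divisor lattice, evaluation of $\sum_{\c\mid\c_1}\mu_k(\c_1/\c)\N\c^s$ as an Euler-type product, and the final interchange of the $\a$- and $\c$-summations are exactly the steps used there. Your explicit justification of the first identity (solubility descends to divisors, and equals divisibility by the maximal ideal $\c(\a,\ov{u})$ of Proposition \ref{prop:hec}) merely spells out what the paper compresses into ``by definition of $\c(\a,\ov{u})$.''
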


\begin{proof} By definition of $\c(\a,\ov{u})$, for every $\c\mid2\Z_k$ we have
$$f(\a,\c)=\sum_{\substack{\c\mid\c_1\mid2\Z_k\\(\c_1,\a)=1}}T(\a,\c_1)\;.$$
By dual M\"obius inversion
we obtain
$$T(\a,\c)=\sum_{\substack{\c | \c_1 | 2 \Z_k \\(\c_1,\a)=1}}\mu_k\Big(\frac{\c_1}{\c}\Big)f(\a,\c_1)\;,$$
where $\mu_k$ is the M\"obius function on ideals of $k$. Replacing in the 
formula for $S(\a;s)$, we thus have

\begin{align*}S(\a;s)&=\sum_{\substack{\c\mid2\Z_k\\(\c,\a)=1}}\N\c^sT(\a,\c)
=\sum_{\substack{\c_1\mid2\Z_k\\(\c_1,\a)=1}}f(\a,\c_1)\sum_{\c\mid\c_1}\mu_k(\c_1/\c)\N\c^s\\
&=\sum_{\substack{\c_1\mid2\Z_k\\(\c_1,\a)=1}}f(\a,\c_1)\N\c_1^s\prod_{\p\mid\c_1}\left(1-\N\p^{-s}\right)\;.\end{align*}

Replacing in the formula for $\Phi_k(s)$ and writing $\c$ for $\c'$, we obtain (when $(\c, \a) = 1$)

\begin{align*}a(k)\Phi_k(s)&=\dfrac{1}{2^{3s}}\sum_{\a\in{\calA}}\dfrac{1}{\N\a^{s/2}}\sum_{\substack{\c\mid2\Z_k\\(\c,\a)=1}}f(\a,\c)\N\c^s\prod_{\p\mid\c}\left(1-\N\p^{-s}\right)\\
&=\dfrac{1}{2^{3s}}\sum_{\c\mid2\Z_k}\N\c^s\prod_{\p\mid\c}\left(1-\N\p^{-s}\right)
\sum_{\substack{\a\in{\calA}\\(\c,\a)=1}}\dfrac{1}{\N\a^{s/2}}f(\a,\c)\;,\end{align*}
proving the lemma.\end{proof}

To compute $f(\mfa, \mfc)$ we introduce the following definitions.

\begin{definition}\label{defsa4} Let $k$ be a cubic field and let $\c$ be an 
ideal of $k$ dividing $2\Z_k$.
\begin{enumerate}\item We define the square ray class group modulo $\c^2$ by
$$\Cl_{\c^2}[N]=\dfrac{\{\a/\ (\a,\c)=1,\ \N(\a)\text{ square}\}}
{\{\be\Z_k/\ \be\equiv1\pmod{^*\c^2},\ \N(\be)\text{ square}\}}\;.$$
In particular, we set $\Cl[N]=\Cl_{\Z_k}[N]$.
\item We define the following subgroup of $\Cl_{\c^2}[N]$:
$$D_{\c^2}[N]=\dfrac{\{\q^2\be/\ (\q^2\be,\c)=1,\ \be\equiv1\pmod{^*\c^2},\ \N(\be)\text{ square}\}}
{\{\be\Z_k/\ \be\equiv1\pmod{^*\c^2},\ \N(\be)\text{ square}\}}\;.$$
Note that the group $C_{\c^2}$, defined in Definition \ref{def_zkc}, is
canonically isomorphic to $\Cl_{\c^2}[N] / D_{\c^2}[N]$.
\item We define the ordinary ray Selmer group modulo $\c^2$ by
$$S_{\c^2}(k)=\{\ov{u}\in S(k)/\ x^2\equiv u\pmod{^*\c^2}\quad\text{soluble}\}\;.$$
\item We define the square Selmer group modulo $\c^2$ by
$$S_{\c^2}[N]=\{\ov{u}\in S_{\c^2}(k)/\ \N(u)\text{ square}\}\;.$$
(Observe that our Selmer group definitions do not depend on the choice of lifts $\ov{u}$.)
\item\label{it_def_zkc_real} Set $Z_{\c}=(\Z_k/\c^2)^*$, and let $Z_{\c}[N]$ 
be the subgroup of elements of $Z_{\c}$ which have a lift to $\Z_k$ whose 
norm is a square. We define $z_k(\c)$ as the index of $Z_{\c}[N]$ in $Z_{\c}$.
(In Proposition \ref{prop_compute_zkc} we will prove that
this intrinsic definition of $z_k(\c)$ agrees with Definition \ref{def_zkc}.)
\end{enumerate}
\end{definition}

\begin{remark} 
 Note that if $u\in V(k)$ then $\pm\N(u)$ is a square
for a suitable sign, so in the definition above the condition $\N(u)$ square
can be replaced by $\N(u)>0$. (This simplification did not apply in \cite{CDO_quartic},
as the base field there was not necessarily $\Q$.)
\end{remark}

The value of $f(\a,\c)$ is then given by the following proposition.

\begin{proposition}\label{prop:acns} Let $\a\in\calA$ with 
$(\a,\c)=1$ be as above. We have $f(\a,\c)\neq0$ if and only if the class of 
$\a$ in $\Cl_{\c^2}[N]$ belongs in fact to $D_{\c^2}[N]$, in which case
$f(\a,\c)=|S_{\c^2}[N]|$.\end{proposition}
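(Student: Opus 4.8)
The plan is to recall the definition of $f(\a,\c)$ and unwind it in terms of the solubility of $x^2 \equiv \al_0 u \pmod{^*\c^2}$ as $\ov{u}$ ranges over $S[N]$. The key observation is that this congruence connects two separate phenomena: the behavior of the ``fixed'' element $\al_0 = \al_0(\a)$ attached to the ideal class $\a$, and the behavior of the variable virtual unit $u$. First I would note that the congruence $x^2 \equiv \al_0 u \pmod{^*\c^2}$ is soluble for \emph{some} $\ov{u}\in S[N]$ if and only if $\al_0$ is a square modulo $^*\c^2$ up to a class in the image of $S[N]$; I expect to rephrase this as a condition purely on the ideal $\a$, namely that the class of $\a$ in $\Cl_{\c^2}[N]$ lies in the subgroup $D_{\c^2}[N]$. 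The relation $\a\q_0^2 = \al_0\Z_k$ from the proof of Proposition \ref{prop:kum} is exactly what translates ``$\al_0$ soluble'' into a statement about $\a$ modulo $\c^2$ squares, since $\al_0$ being congruent to a square times a norm-square unit modulo $^*\c^2$ is precisely the defining relation of $D_{\c^2}[N]$.

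More concretely, the plan is to prove the two halves separately. For the vanishing criterion, I would argue that $f(\a,\c)\ne 0$ means there exists at least one $\ov{u}\in S[N]$ with $x^2\equiv\al_0 u\pmod{^*\c^2}$ soluble, i.e.\ $\al_0 u$ is a square modulo $^*\c^2$; writing this multiplicatively and using $\al_0\Z_k = \a\q_0^2$, this says exactly that $\a$ (as an ideal with square norm coprime to $\c$) becomes, modulo $^*\c^2$-trivial classes, a square ideal times a norm-square unit, which is the membership $[\a]\in D_{\c^2}[N]$ inside $\Cl_{\c^2}[N]$. Conversely, membership in $D_{\c^2}[N]$ produces such a $u$. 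Here I should be careful that the condition is well-defined independent of the choice of $\al_0(\a)$ and $\q_0(\a)$, but since changing these only alters $\al_0$ by a square and an element of $V[N]$, the class in $\Cl_{\c^2}[N]/D_{\c^2}[N] = C_{\c^2}$ is unaffected.

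For the count, once one solution $\ov{u}_0$ exists, the full solution set is $\ov{u}_0$ times the set of $\ov{u}\in S[N]$ for which $x^2\equiv u\pmod{^*\c^2}$ is soluble with $\N(u)$ square. That is precisely the definition of $S_{\c^2}[N]$, so the fibers of the solubility condition form a coset of $S_{\c^2}[N]$ inside $S[N]$, giving $f(\a,\c) = |S_{\c^2}[N]|$ whenever it is nonzero. The main obstacle I anticipate is the bookkeeping in the first part: carefully matching the solubility of $x^2\equiv\al_0 u$ modulo $^*\c^2$ against the quotient $\Cl_{\c^2}[N]/D_{\c^2}[N]$, and verifying that the norm-square conditions line up on both the ideal side (square norm of $\a$) and the unit side (square norm of $u$), so that the correspondence lands in the correct subgroups. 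The translation between the element-level congruence and the ideal-theoretic membership is where one must be most attentive to the precise definitions of $\Cl_{\c^2}[N]$, $D_{\c^2}[N]$, and $S_{\c^2}[N]$ given in Definition \ref{defsa4}.
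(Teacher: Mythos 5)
Your proposal is correct and follows essentially the same route as the paper's proof: translating the solubility of $x^2\equiv\al_0u\pmod{^*\c^2}$ into the ideal-theoretic membership $[\a]\in D_{\c^2}[N]$ via the relation $\a\q_0^2=\al_0\Z_k$ (with the square-norm conditions on $\al_0$ and $u$ forcing the square norm of the multiplier $\be$), and then showing the set of admissible $\ov{u}$ is a coset of $S_{\c^2}[N]$ in $S[N]$. Your added remark on independence of the choice of $\al_0(\a)$ and $\q_0(\a)$ is a harmless extra check, not needed in the paper's argument.
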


\begin{proof} This is just a matter of rewriting the definitions. Assume that
there exists $\ov{u}\in S[N]$ such that $x^2\equiv\al_0u\pmod{^*\c^2}$ has a 
solution. This means that we can write $\be x^2=\al_0u$ for some
$\be\equiv1\pmod{^*\c^2}$. Since $u\in V(k)$ we can write 
$u\Z_k=\q_1^2$ for some ideal $\q_1$. Thus 
$$\a=\al_0\q_0^{-2}=(\be x^2/u)\q_0^{-2}=\be(x/(\q_0\q_1))^2\;.$$
In addition, since $u$ and $\al_0$ are of square norm in $K$, $\be$ is also of
square norm, and since $\a$ is coprime to $\c$, the class of $\a$ in 
$\Cl_{\c^2}[N]$ belongs to $D_{\c^2}[N]$. The proof of the converse
retraces the above steps and is left to the reader, proving the first part of 
the proposition. For the second part, assume that there exists 
$\ov{v}\in S[N]$ and $y\in k^*$ such that $y^2\equiv\al_0v\pmod{^*\c^2}$. Then
the solubility of $x^2\equiv\al_0u\pmod{^*\c^2}$ is equivalent to that of
$(x/y)^2\equiv(u/v)\pmod{^*\c^2}$, in other words to 
$\ov{u}\in\ov{v}\,S_{\c^2}[N]$ whose cardinality is equal to that of 
$S_{\c^2}[N]$, proving the proposition.
\end{proof}

\begin{corollary}\label{corphik} We have
\begin{align*}a(k)\Phi_k(s)&=\dfrac{1}{2^{3s}}\sum_{\c\mid2\Z_k}\dfrac{\left|S_{\c^2}[N]\right|}{|C_{\c^2}|}\N\c^s\prod_{\p\mid\c}\left(1-\N\p^{-s}\right)\sum_{\chi\in X_{\c^2}}F_k(\chi,s)\;,\text{\quad with}\\
F_k(\chi,s)&=\prod_{p\Z_k=\p_1\p_2}\left(1+\dfrac{\chi(\p_2)}{p^s}\right)
\prod_{p\Z_k=\p_1^2\p_2}\left(1+\dfrac{\chi(\p_1\p_2)}{p^s}\right)\prod_{p\Z_k=\p_1\p_2\p_3}\left(1+\dfrac{\chi(\p_1\p_2)+\chi(\p_1\p_3)+\chi(\p_2\p_3)}{p^s}\right)\;.\end{align*}
\end{corollary}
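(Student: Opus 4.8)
The plan is to begin from the preliminary formula of Lemma~\ref{lem_prelim} and insert the evaluation of $f(\a,\c)$ furnished by Proposition~\ref{prop:acns}. That proposition says $f(\a,\c)$ vanishes unless the class of $\a$ in $\Cl_{\c^2}[N]$ lies in $D_{\c^2}[N]$, in which case $f(\a,\c)=|S_{\c^2}[N]|$; since $C_{\c^2}\isom\Cl_{\c^2}[N]/D_{\c^2}[N]$, the nonvanishing condition is exactly that the class of $\a$ in $C_{\c^2}$ is trivial. First I would pull $|S_{\c^2}[N]|$ out of the $\a$-sum, leaving
$$a(k)\Phi_k(s)=\dfrac{1}{2^{3s}}\sum_{\c\mid2\Z_k}|S_{\c^2}[N]|\,\N\c^s\prod_{\p\mid\c}\left(1-\N\p^{-s}\right)\sum_{\substack{\a\in\calA,\ (\c,\a)=1\\ [\a]=1\text{ in }C_{\c^2}}}\dfrac{1}{\N\a^{s/2}}\;.$$

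The next step is character orthogonality on the finite abelian group $C_{\c^2}$, whose character group is $X_{\c^2}$. Writing the indicator of $[\a]=1$ as $|C_{\c^2}|^{-1}\sum_{\chi\in X_{\c^2}}\chi(\a)$ and interchanging the order of summation turns the inner sum into $|C_{\c^2}|^{-1}\sum_{\chi}\sum_{\a}\chi(\a)\N\a^{-s/2}$. Here I would first record that the \emph{global} condition built into $\calA$ (that the ideal class of $\a$ be a square in $\Cl(k)$) is redundant once we demand $[\a]\in D_{\c^2}[N]$: any representative $\q^2\be$ of a class in $D_{\c^2}[N]$ has $\Cl(k)$-class $[\q]^2$, so the restriction $[\a]=1$ in $C_{\c^2}$ already forces $\a\in\calA$. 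This lets me replace $\calA$ by the larger set of all squarefree integral ideals $\a$ of square norm coprime to $\c$, dropping the global class condition, whereupon the Dirichlet series $\sum_{\a}\chi(\a)\N\a^{-s/2}$ factors as an Euler product.

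It then remains to identify this Euler product with $F_k(\chi,s)$. The key bookkeeping point is that $\N\a$ is a perfect square, so $\N\a^{-s/2}$ contributes $p^{-s}$ for each rational prime of norm $p^2$ occurring in $\a$; combined with the squarefree and square-norm conditions this determines, prime by prime, exactly which local ideals $\a_p$ appear. For $p\Z_k=\p_1\p_2$ only $\a_p\in\{1,\p_2\}$ has square norm (giving $1+\chi(\p_2)p^{-s}$); for $p\Z_k=\p_1^2\p_2$ only $\{1,\p_1\p_2\}$ does (giving $1+\chi(\p_1\p_2)p^{-s}$); for $p\Z_k=\p_1\p_2\p_3$ the even-size subsets give $1+(\chi(\p_1\p_2)+\chi(\p_1\p_3)+\chi(\p_2\p_3))p^{-s}$; and for the inert and totally ramified types the only squarefree square-norm local ideal is trivial, contributing $1$. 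This reproduces the three products defining $F_k(\chi,s)$. The one case needing care is $p=2$, i.e. the primes dividing $\c$: the coprimality constraint $(\c,\a)=1$ deletes the primes above $2$ dividing $\c$ from $\a$, but this matches exactly the convention that $\chi$ vanishes on ideals not coprime to $\c$, so the local factor agrees with the corresponding factor of $F_k(\chi,s)$ in every subcase. Assembling the pieces gives the stated formula, with $|S_{\c^2}[N]|/|C_{\c^2}|$ emerging as the combined constant. I expect the matching of local factors at $p=2$, across the various splitting types of $2$ in $k$ and divisors $\c\mid2\Z_k$, to be the only genuinely delicate step.
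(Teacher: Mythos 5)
Your proposal is correct and takes essentially the same route as the paper's proof: substitute Proposition \ref{prop:acns} into Lemma \ref{lem_prelim}, observe that membership of $\ov{\a}$ in $D_{\c^2}[N]$ already forces the class of $\a$ to lie in $\Cl(k)^2$ (so the sum may be extended to all integral squarefree ideals of square norm coprime to $\c$), detect the condition by character orthogonality on $C_{\c^2}$, and then factor the resulting multiplicative Dirichlet series into the local factors of $F_k(\chi,s)$ by checking the five splitting types, with the coprimality convention $\chi(\a)=0$ for $(\a,\c)\neq1$ handling the primes above $2$. The only cosmetic difference is the order of operations (the paper enlarges the index set to $\calA'$ before introducing the character sum, you do so just after), which is immaterial.
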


\begin{proof} By the above proposition we have
$$a(k)\Phi_k(s)=\dfrac{1}{2^{3s}}\sum_{\c\mid2\Z_k}\left|S_{\c^2}[N]\right|\N\c^s\prod_{\p\mid\c}\left(1-\N\p^{-s}\right)\sum_{\substack{\a\in\calA\\\ov{\a}\in D_{\c^2}[N]}}\dfrac{1}{\N\a^{s/2}}\;.$$
Note first that if $\ov{\a}\in D_{\c^2}[N]$ then the class of $\a$ in $\Cl(k)$
belongs to $\Cl(k)^2$, so we may replace $\a\in\calA$ by $\a\in\calA'$, where
the condition that the class of $\a$ is in $\Cl(k)^2$ is removed. Since 
$C_{\c^2} = \Cl_{\c^2}[N]/D_{\c^2}[N]$, we can detect the 
condition $\ov{\a}\in D_{\c^2}[N]$ by summing over characters of $C_{\c^2}$, 
hence
$$a(k)\Phi_k(s)=\dfrac{1}{2^{3s}}\sum_{\c\mid2\Z_k}\dfrac{\left|S_{\c^2}[N]\right|}{|C_{\c^2}|}\N\c^s\prod_{\p\mid\c}\left(1-\N\p^{-s}\right)\sum_{\chi\in X_{\c^2}}\sum_{\a\in\calA'}\dfrac{\chi(\a)}{\N\a^{s/2}}\;.$$
The last sum is clearly multiplicative (because we removed the condition on
$\Cl(k)^2$), and looking at the five possible decomposition types of primes
and keeping only the integral squarefree ideals of square norm proves the 
corollary.\end{proof}

\subsection{Computation of $|S_{\c^2}[N]|/|C_{\c^2}|$}

The above computations were straightforward. It remains to
compute $|S_{\c^2}[N]|/|C_{\c^2}|$, and this is more difficult.
In the course of this computation we will prove some intermediate results
which we will further use in this paper.

\begin{proposition}\label{prop:exseq} Recall from 
Definition \ref{defsa4} that we have set $Z_{\c}=(\Z_k/\c^2)^*$, and that
$C_{\c^2}=\Cl_{\c^2}[N]/D_{\c^2}[N]$.
\begin{enumerate}
\item There exists a natural exact sequence
\begin{equation}\label{eqn:exseq}
1\LR S_{\c^2}[N]\LR S[N]\LR \dfrac{Z_{\c}[N]}{Z_{\c}^2}\LR C_{\c^2}\LR C_{(1)}
\LR1\;.
\end{equation}
\item There exists a natural exact sequence
\begin{equation}
1 \LR U(k)/U(k)^2 \LR S(k) \LR \Cl(k)[2] \LR 1\;,
\end{equation}
and a canonical isomorphism $S[N]\isom S(k)/\{\pm1\}$.
\item We have a canonical isomorphism $C_{(1)}\isom\Cl(k)/\Cl(k)^2$.
\end{enumerate}
\end{proposition}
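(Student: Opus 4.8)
The plan is to establish the three exact sequences and isomorphisms in Proposition~\ref{prop:exseq} by systematically unwinding the definitions of the various groups and relating them via the natural maps. I would treat the three parts in a convenient order, beginning with part (3) since it is the cleanest: by definition $C_{(1)}=\Cl_{\Z_k}[N]/D_{\Z_k}[N]$, where $\c=\Z_k$ so that all congruence conditions modulo $^*\c^2$ become vacuous. Thus $\Cl_{\Z_k}[N]$ is the group of ideals of square norm modulo principal ideals of square norm, and $D_{\Z_k}[N]$ is generated by squares of ideals (times principal ideals of square norm). The map sending an ideal class to its image in $\Cl(k)/\Cl(k)^2$ is then manifestly surjective with kernel exactly $D_{\Z_k}[N]$, provided one checks that every class in $\Cl(k)^2$ is represented by an ideal of square norm---which follows because the square of any ideal has square norm. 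This identifies $C_{(1)}\isom\Cl(k)/\Cl(k)^2$.

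For part (2), the first exact sequence is the standard description of the $2$-Selmer group: starting from $V(k)/{k^*}^2=S(k)$, the map $S(k)\to\Cl(k)[2]$ sends $\ov\al$ to the class of $\q$ where $\al\Z_k=\q^2$, which is well-defined into the $2$-torsion of the class group, is surjective, and has kernel the image of the units $U(k)/U(k)^2$. The canonical isomorphism $S[N]\isom S(k)/\{\pm1\}$ uses the remark for odd-degree fields already noted in the excerpt: for a cubic field, given any $\ov u\in S(k)$ exactly one of $\pm u$ lies in $V[N]$, so the inclusion $V[N]\hookrightarrow V(k)$ descends to an isomorphism after quotienting by $\{\pm1\}$ on the right (the only ambiguity being the sign, since $-1$ has norm $-1$ in odd degree and hence is nontrivial in $S(k)/S[N]$).

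The main work, and the main obstacle, is the five-term exact sequence in part (1). The plan is to construct the connecting map $S[N]\to Z_{\c}[N]/Z_{\c}^2$ by sending $\ov u$ to the class of a lift of $u$ in $(\Z_k/\c^2)^*$; one must check this lands in $Z_{\c}[N]$ (clear, since $u$ has square norm) and is well-defined modulo $Z_{\c}^2$ (since replacing $u$ by $u\beta^2$ changes the image by a square). The kernel of this map consists of those $\ov u$ for which $u\equiv x^2\pmod{^*\c^2}$ is soluble, which is precisely $S_{\c^2}[N]$ by Definition~\ref{defsa4}(4), giving exactness at $S[N]$. The map $Z_{\c}[N]/Z_{\c}^2\to C_{\c^2}$ sends the class of $\beta\in Z_{\c}[N]$ to the ideal class of $\beta\Z_k$ in $C_{\c^2}=\Cl_{\c^2}[N]/D_{\c^2}[N]$; exactness at $Z_{\c}[N]/Z_{\c}^2$ amounts to showing that a principal ideal $\beta\Z_k$ lies in $D_{\c^2}[N]$ exactly when $\beta$ is a square times a globally-congruent element, which is again a definition-chasing argument. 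Finally exactness at $C_{\c^2}$ and surjectivity onto $C_{(1)}$ come from the natural projection $\Cl_{\c^2}[N]\to\Cl_{(1)}[N]$ forgetting the congruence condition, whose effect on the $D$-quotients is controlled by the same group $Z_{\c}$. The delicate points will be verifying that the norm (``square norm'') conditions propagate correctly through each map and that the $^*$-congruences at infinite places are handled consistently; I expect the careful bookkeeping of which representatives have square norm, together with checking that the composite of consecutive maps is trivial and that kernels match images at each of the four junctions, to be where essentially all the effort lies.
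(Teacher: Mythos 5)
Your skeleton for part (1) — the same maps, with definition-chasing at the junctions involving $S_{\c^2}[N]$, $S[N]$, and $Z_{\c}[N]/Z_{\c}^2$ — is the same as the paper's, and your parts (2) and (3) are essentially sound (the paper proves (3) via the isomorphism $\phi(\a)=\a/\N(\a)$ of Proposition \ref{prop_c4} rather than your forgetful map, but both routes work). However, your part (1) has a genuine gap at precisely the one place where the paper has to do real work: the surjectivity of the final map $C_{\c^2}\LR C_{(1)}$. You dispose of it with ``exactness at $C_{\c^2}$ and surjectivity onto $C_{(1)}$ come from the natural projection $\Cl_{\c^2}[N]\to\Cl[N]$ \dots whose effect on the $D$-quotients is controlled by the same group $Z_{\c}$,'' which is not an argument. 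The issue is that surjectivity of $\Cl_{\c^2}[N]\to\Cl[N]$ is itself not obvious: one must show that every class of $\Cl[N]$ contains an ideal coprime to $\c$, where ``same class'' means differing by a principal ideal generated by an element \emph{of square norm}. The ordinary approximation theorem produces $\be$ with $\be\a$ integral and coprime to $2\Z_k$, but nothing forces $\N(\be)$ to be a square, so multiplication by $(\be)$ is not an allowed move inside $\Cl[N]$. Your proposal never confronts this; indeed the ``delicate points'' you flag (propagation of norm conditions, $^*$-congruences at infinite places) are not where the difficulty lies — the modulus $\c^2$ is finite, so no sign conditions enter, and the paper explicitly calls everything except this surjectivity ``immediate.''

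The paper closes the gap with a device your proposal lacks: given $\a$ of square norm, write $\N(\a)=q^2$ and set $\b=\a q^{-1}$, so that $\b/\N(\b)=\a$; by approximation choose $\be$ so that $\be\b$ is integral and coprime to $2\Z_k$; then $(\be/\N(\be))\,\a=(\be\b)/\N(\be\b)$ is coprime to $2\Z_k$ (its norm is odd), while the multiplier $\be/\N(\be)$ \emph{does} have square norm, since $\N(\be/\N(\be))=\N(\be)^{-2}$. Hence this ideal lies in the image of $\Cl_{\c^2}[N]$ and in the same class as $\a$ in $\Cl[N]$, giving surjectivity. Some such passage from $\be$ to $\be/\N(\be)$ is indispensable, and note that the same trick is what makes your ``manifestly surjective'' claim in part (3) true: to see that $C_{(1)}\to\Cl(k)/\Cl(k)^2$ is onto, you need the observation that $\a/\N(\a)$ has square norm and lies in the ideal class of $\a$ — which is exactly the map $\phi$ the paper uses in Proposition \ref{prop_c4}. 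With these insertions your outline becomes a complete proof; without them, the two surjectivity claims are unproved.
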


\begin{proof} (1). All the maps are clear, and the exactness is immediate 
everywhere except for the surjectivity of the final map. Let $\a$ be an ideal 
of square norm, so that $\N(\a)=q^2$ with $q \in \Q$. If we set
$\b=\a q^{-1}$ it is clear that $\b/\N(\b)=\a$. By the approximation
theorem, we can find $\be\in k$ such that $\be\b$ is integral and coprime
to $2\Z_k$. It follows that $\N(\be\b)$ is coprime to $2$, hence
$(\be/\N(\be))\b/\N(\b)=(\be/\N(\be))\a$ is coprime to
$2\Z_k$, hence to $\c^2$. Since $\be/\N(\be)$ is an element of
square norm, we conclude that $(\be/\N(\be))\a$ is in the same class
as $\a$ in $\Cl[N]$ and is coprime to $\c^2$, proving the surjectivity of
the natural map from $\Cl_{\c^2}[N]$ to $\Cl[N]$, hence that of the last
map in the sequence above.

\smallskip

(2) and (3). The exact sequence is equivalent to the definition of $S(k)$, and
the isomorphism $S(k)/\{\pm1\}\isom S[N]$ is simply the map induced by
$u\mapsto\sign(\N(u))u$. Finally (3) will be proved in Proposition
\ref{prop_c4} below.\end{proof}

\begin{corollary}\label{corsc2} We have
$$\dfrac{|S_{\c^2}[N]|}{|C_{\c^2}|}=2^{2-r_2(k)}\dfrac{z_k(\c)}{\N(\c)}\;,$$
where we recall from Definition \ref{defsa4} that $z_k(\c)=[Z_{\c}:Z_{\c}[N]]$.
\end{corollary}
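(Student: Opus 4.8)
The plan is to read off the ratio directly from the five-term exact sequence of Proposition \ref{prop:exseq}(1) by taking the alternating product of the orders of the (finite) groups involved. For an exact sequence $1\LR A_1\LR A_2\LR A_3\LR A_4\LR A_5\LR 1$ of finite abelian groups one has $|A_1|\,|A_3|\,|A_5|=|A_2|\,|A_4|$, so applying this to
$$1\LR S_{\c^2}[N]\LR S[N]\LR Z_{\c}[N]/Z_{\c}^2\LR C_{\c^2}\LR C_{(1)}\LR1$$
gives
$$\frac{|S_{\c^2}[N]|}{|C_{\c^2}|}=\frac{|S[N]|}{|Z_{\c}[N]/Z_{\c}^2|\cdot|C_{(1)}|}\;.$$
It therefore remains to evaluate $|S[N]|/|C_{(1)}|$ and $|Z_{\c}[N]/Z_{\c}^2|$ separately.

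For the first quantity I would use parts (2) and (3) of Proposition \ref{prop:exseq}. Part (3) gives $|C_{(1)}|=|\Cl(k)/\Cl(k)^2|=|\Cl(k)[2]|$. Part (2) gives $|S(k)|=|U(k)/U(k)^2|\cdot|\Cl(k)[2]|$, and since $k$ is cubic its unit group has the shape $\{\pm1\}\times\Z^{r_1+r_2-1}$ with $r_1+r_2=3-r_2(k)$, so that $|U(k)/U(k)^2|=2^{r_1+r_2}=2^{3-r_2(k)}$; combined with the isomorphism $S[N]\isom S(k)/\{\pm1\}$ (and the fact that $-1$ is not a square in a cubic field) this yields $|S[N]|=2^{2-r_2(k)}|\Cl(k)[2]|$. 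The factors $|\Cl(k)[2]|$ then cancel, leaving $|S[N]|/|C_{(1)}|=2^{2-r_2(k)}$. For the second quantity, I note that $Z_{\c}^2\subseteq Z_{\c}[N]\subseteq Z_{\c}$ (a square automatically has square norm), and that by Definition \ref{defsa4}\eqref{it_def_zkc_real} the index $[Z_{\c}:Z_{\c}[N]]$ is exactly $z_k(\c)$; hence $|Z_{\c}[N]/Z_{\c}^2|=|Z_{\c}/Z_{\c}^2|/z_k(\c)$. Substituting both evaluations into the displayed identity reduces the entire corollary to the single claim $|Z_{\c}/Z_{\c}^2|=\N(\c)$, i.e.\ that the number of square classes in $(\Z_k/\c^2)^*$ equals $\N(\c)$.

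This last claim is the one genuine computation, and I expect it to be the main obstacle. Since $\c\mid2\Z_k$ the statement is local at the primes above $2$, so by the Chinese Remainder Theorem it suffices to prove, for a single prime $\p\mid2\Z_k$ with $\N\p=q$, $e=v_\p(2)$ and an exponent $a\le e$, that $(\Z_k/\p^{2a})^*$ has exactly $q^a$ square classes. Writing $U_j=1+\p^j$, the decomposition $(\Z_k/\p^{2a})^*\isom\F_q^*\times(U_1/U_{2a})$ has odd part $\F_q^*$ contributing nothing, so the number of square classes equals $|(U_1/U_{2a})[2]|$. The plan is to analyze the squaring map $1+t\mapsto1+(2t+t^2)$ on the filtration: writing $j=v_\p(t)$ one finds $v_\p(2t+t^2)=\min(2j,\,e+j)$, with possible extra cancellation only when $j=e$, and since $a\le e$ a short case check shows that $v_\p(2t+t^2)\ge2a$ holds precisely when $v_\p(t)\ge a$. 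Thus the kernel of squaring on $U_1/U_{2a}$ is exactly $U_a/U_{2a}$, of order $q^{2a-a}=q^a$, which proves the claim and hence the corollary. The only delicate point is the behavior at $j=e$, where $2t$ and $t^2$ have equal valuation and one must check that the possible cancellation cannot drop $v_\p(2t+t^2)$ below $2a$; this is automatic, since both terms already have valuation $\ge2e\ge2a$.
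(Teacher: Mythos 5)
Your proposal is correct, and its first half is precisely the paper's argument: taking the alternating product of orders along the five-term exact sequence of Proposition \ref{prop:exseq}, then using parts (2) and (3) of that proposition, Dirichlet's unit theorem in the form $|U(k)/U(k)^2|=2^{r_1+r_2}=2^{3-r_2(k)}$, and $|S[N]|=|S(k)|/2$, is exactly the chain of equalities in the paper's proof, and both reduce the corollary to the identity $|Z_{\c}[N]/Z_{\c}^2|=\N(\c)/z_k(\c)$. Where you genuinely diverge is in proving the counting fact $|Z_{\c}/Z_{\c}^2|=\N(\c)$, which is the content of the paper's Lemma \ref{easyzk1}. The paper's route is structural: since $\c\mid2\Z_k$, the congruence $x^2\equiv1\pmod{^*\c^2}$ is equivalent to $x\equiv1\pmod{^*\c}$, so squaring induces an isomorphism $(\Z_k/\c)^*\isom Z_{\c}^2$; then $|Z_{\c}^2|=\phi_k(\c)$ and $|Z_{\c}|=\phi_k(\c^2)=\N(\c)\phi_k(\c)$ give the claim immediately. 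You instead localize via CRT and run a valuation analysis of the squaring map on the filtration $U_j=1+\p^j$, identifying the kernel on $U_1/U_{2a}$ as exactly $U_a/U_{2a}$ of order $q^a$. Both arguments are sound and both hinge on the hypothesis $\c\mid2\Z_k$ at the same spot (for you, $a\le e$ in the case check; for the paper, $2\in\c$ so that $2t\in\c^2$ whenever $t\in\c$), and your treatment of the delicate case $j=e$ is fine since cancellation can only raise valuations and $j=e\ge a$ already places you in the kernel. What the paper's version buys is brevity and the extra structural statement $|Z_{\c}^2|=\phi_k(\c)$, which it reuses elsewhere (e.g., in Propositions \ref{prop_c4} and \ref{prop_compute_zkc}); what yours buys is a self-contained, purely local computation that makes explicit the mechanism --- the unit filtration and the inequality $\min(2j,e+j)\ge 2a$ --- underlying the lemma.
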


\begin{proof} Using the exact sequences and isomorphisms, the equality 
$|\Cl(k)[2]|=|\Cl(k)/\Cl(k)^2|$ and Dirichlet's unit theorem telling us that 
$|U(k)/U(k)^2|=2^{r_1(k)+r_2(k)}$ we have
\begin{align*}|S_{\c^2}[N]||Z_{\c}[N]/Z_{\c^2}||C_{(1)}|&=|S[N]||C_{\c^2}|=|S(k)||C_{\c^2}|/2=|U(k)/U(k)^2||\Cl(k)[2]||C_{\c^2}|/2\\
&=2^{r_1(k)+r_2(k)}|\Cl(k)/\Cl(k)^2||C_{\c^2}|/2=2^{r_1(k)+r_2(k)-1}|C_{(1)}||C_{\c^2}|\;,\end{align*}
in other words 
$$|S_{\c^2}[N]|/|C_{\c^2}|=2^{r_1(k)+r_2(k)-1}/|Z_{\c}[N]/Z_{\c}^2|\;.$$
Since $k$ is a cubic field we have $r_1(k)+r_2(k)=3-r_2(k)$, so
the corollary is immediate from the following lemma:\end{proof}

\begin{lemma}\label{easyzk1} The map $x\mapsto x^2$ induces a natural
isomorphism between $(\Z_k/\c)^*$ and $Z_{\c}^2$. In particular, 
$|Z_{\c}^2|=\phi_k(\c)$, where $\phi_k$ denotes the ideal Euler $\phi$ function
for the field $k$, $|Z_{\c}/Z_{\c}^2|=\N(\c)$ and 
$|Z_{\c}[N]/Z_{\c}^2|=\N(\c)/z_k(\c)$.
\end{lemma}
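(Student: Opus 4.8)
The plan is to analyze the squaring map $x \mapsto x^2$ on the finite abelian group $Z_{\c} = (\Z_k/\c^2)^*$ and identify its image with $Z_{\c}^2$, then compute cardinalities. First I would observe that since $\c \mid 2\Z_k$, the group $Z_{\c}$ is a finite abelian $2$-group times a group of odd order, but more usefully, the kernel and image of squaring can be controlled by a structural isomorphism $(\Z_k/\c^2)^* \isom \prod_{\p \mid \c} (\Z_k/\p^{2e_\p})^*$ via the Chinese Remainder Theorem, where $e_\p = v_\p(\c)$. The core claim is that the natural reduction map $(\Z_k/\c^2)^* \to (\Z_k/\c)^*$, when composed appropriately, realizes $Z_{\c}^2$ as isomorphic to $(\Z_k/\c)^*$.

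The key step is to show that squaring induces an isomorphism $(\Z_k/\c)^* \isom Z_{\c}^2$. I would proceed locally at each prime $\p \mid \c$. Writing $q = \N(\p)$ for the residue field size (necessarily a power of $2$ since $\p \mid 2\Z_k$), the unit group $(\Z_k/\p^{2e})^*$ has order $q^{2e-1}(q-1)$ with $q-1$ odd. The $2$-part is the subgroup $1 + \p/\p^{2e}$ of order $q^{2e-1}$, and on the prime-to-$2$ part squaring is already an isomorphism. The essential local computation is that squaring on the pro-$2$ part $1 + \p/\p^{2e}$ has kernel of size $\N(\p)^{e} = \N(\p^e)$ and image of size $\N(\p)^{e-1}(q-1)\cdot\N(\p)^{e}$ — more cleanly, I would show directly that $|Z_\c^2| = \phi_k(\c)$ by computing $|Z_\c| = \phi_k(\c^2) = \N(\c)\phi_k(\c)$ and $|\ker(\text{squaring})| = \N(\c)$, so that $|Z_\c^2| = \phi_k(\c^2)/\N(\c) = \phi_k(\c)$. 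The kernel size $\N(\c)$ is exactly $|\{x \in (\Z_k/\c^2)^* : x^2 \equiv 1\}|$, which factors over the primes dividing $\c$; this is where the hypothesis $\c \mid 2\Z_k$ is used, since in residue characteristic $2$ the squaring map on $1 + \p/\p^{2e}$ has a large kernel, in contrast to odd primes.

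From $|Z_\c^2| = \phi_k(\c)$ the remaining two assertions are immediate: $|Z_\c/Z_\c^2| = \phi_k(\c^2)/\phi_k(\c) = \N(\c)$, and since $z_k(\c) = [Z_\c : Z_\c[N]]$ by Definition \ref{defsa4}\eqref{it_def_zkc_real}, and $Z_\c^2 \subseteq Z_\c[N]$ (any square $x^2$ lifts to an element of square norm, namely the square of a lift of $x$), we get
$$|Z_\c[N]/Z_\c^2| = \frac{|Z_\c/Z_\c^2|}{[Z_\c : Z_\c[N]]} = \frac{\N(\c)}{z_k(\c)}\;.$$
The main obstacle I anticipate is the explicit local squaring computation on $1 + \p/\p^{2e}$ in residue characteristic $2$: unlike the odd case where squaring is bijective on the pro-$\ell$ part, here one must carefully track the filtration $1 + \p^i/1 + \p^{i+1}$ under squaring, since $x \mapsto x^2$ does not simply shift by one level when $2 \in \p$. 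Getting the kernel size to come out to exactly $\N(\c)$, uniformly across the ramification types of $2$ in $k$, is the delicate point, though it should follow from a clean filtration argument once set up correctly.
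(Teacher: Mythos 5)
Your proposal is correct in substance and its counting skeleton is the same as the paper's: compute $|Z_{\c}|=\phi_k(\c^2)=\N(\c)\phi_k(\c)$, show the squaring map on $Z_{\c}$ has kernel of size $\N(\c)$, deduce $|Z_{\c}^2|=\phi_k(\c)$ and $|Z_{\c}/Z_{\c}^2|=\N(\c)$, and then split off $z_k(\c)=[Z_{\c}:Z_{\c}[N]]$ using $Z_{\c}^2\subseteq Z_{\c}[N]$ (a containment you rightly make explicit). The genuine difference is how the kernel of squaring is handled, and that is the only place where the lemma has content. You defer it to a local CRT/filtration analysis and flag it as the delicate point; the paper instead disposes of it globally in one line: since $2\in\c$, the congruence $x^2\equiv1\pmod{^*\c^2}$ is \emph{equivalent} to $x\equiv1\pmod{^*\c}$. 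Indeed, writing $x^2-1=(x-1)(x+1)$ and $x+1=(x-1)+2$, if some $\p\mid\c$ had $v_{\p}(x-1)<v_{\p}(\c)\le v_{\p}(2)$ then $v_{\p}(x+1)=v_{\p}(x-1)$ and $v_{\p}(x^2-1)=2v_{\p}(x-1)<2v_{\p}(\c)$, a contradiction; the reverse implication is immediate since $2c$ and $c^2$ lie in $\c^2$ for $c\in\c$. This equivalence does three things at once: it identifies the kernel of squaring with the kernel of reduction $Z_{\c}\to(\Z_k/\c)^*$, so the kernel has size $\N(\c)$ with no case analysis whatsoever over the splitting types of $2$ (your anticipated obstacle dissolves -- all that matters is $v_{\p}(\c)\le v_{\p}(2)$, which is precisely the hypothesis $\c\mid2\Z_k$ and not merely residue characteristic $2$); it supplies the well-definedness of the induced map from $(\Z_k/\c)^*$, the easy containment of kernels that your cardinality argument would still need to state; and it yields the asserted isomorphism $(\Z_k/\c)^*\isom Z_{\c}^2$ directly rather than by a counting coincidence. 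Your filtration route would also succeed -- locally the kernel is exactly the classes congruent to $1$ modulo $\p^{e}$ with $e=v_{\p}(\c)$ -- so there is no gap in substance, only an unexecuted step whose resolution is considerably simpler than you expect. One slip worth noting: your parenthetical image size $\N(\p)^{e-1}(q-1)\cdot\N(\p)^{e}$ for squaring equals the order of the whole group $(\Z_k/\p^{2e})^*$ and cannot be correct (the image of squaring on the full unit group has order $q^{e-1}(q-1)$); the clean count you substitute for it immediately afterwards is the right one.
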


\begin{proof} Since $\c\mid2\Z_k$, it is immediately checked that the 
congruence $x^2\equiv1\pmod{^*\c^2}$ is equivalent to $x\equiv1\pmod{^*\c}$, 
which proves the first result. Now it is well-known and easy to show that we
have the explicit formula $\phi_k(\c)=\N(\c)\prod_{\p\mid\c}(1-1/\N\p)$. It
follows in particular that 
$|Z_{\c}|:=|\Z_k/\c^2|=\phi_k(\c^2)=\N(\c)\phi_k(\c)$, so that
$|Z_{\c}/Z_{\c}^2|=\N(\c)=|Z_{\c}/Z_{\c}[N]||Z_{\c}[N]/Z_{\c}^2|$,
proving the lemma by definition of $z_k(\c)$.
\end{proof}

It remains to prove the formula for $z_k(\c)$ given in Definition 
\ref{def_zkc}.

\subsection{Computation of $z_k(\c)$}

We first prove a series of lemmas.

\begin{lemma}\label{propm1} 
For each $\mfc | 2 \Z_k$ we have $z_k(\c)=1$ if there
exists an element $\be$ of square norm such that $\be\equiv-1\pmod{\c^2}$,
and $z_k(\c)=2$ otherwise.

In particular, $z_k((2)) = 2$.
\end{lemma}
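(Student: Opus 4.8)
The plan is to work entirely with the intrinsic description $z_k(\c)=[Z_\c:Z_\c[N]]$ from Definition \ref{defsa4}, where $Z_\c=(\Z_k/\c^2)^*$ and $Z_\c[N]$ is the subgroup of those classes admitting a lift to $\Z_k$ of square norm. Writing $-1$ for the class of the rational integer $-1$ in $Z_\c$, I would reduce everything to the single assertion that $Z_\c=\langle-1\rangle\cdot Z_\c[N]$. Granting this, since $(-1)^2=1\in Z_\c[N]$ the index $[Z_\c:Z_\c[N]]$ is at most $2$, and it equals $1$ exactly when $-1\in Z_\c[N]$, i.e. exactly when some $\be\equiv-1\pmod{\c^2}$ has square norm, and is $2$ otherwise. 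This is precisely the stated dichotomy, and it also yields $z_k(\c)\in\{1,2\}$ for free.

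First I would prove $Z_\c=\langle-1\rangle Z_\c[N]$ in the single case $\c=2\Z_k$. Given $\ov{x}\in(\Z_k/4\Z_k)^*$, lift it to some $x\in\Z_k$ coprime to $2$, so that $N_0:=\N(x)$ is an odd integer; write $N_0=se^2$ with $s$ its squarefree part and $e\in\Z$. Because $s$ is odd it satisfies $s\equiv1$ or $s\equiv3\pmod4$, so exactly one of $t=s$, $t=-s$ is $\equiv1\pmod4$, and this $t$ is squarefree and coprime to $2$. Then $xt\equiv x\pmod{4\Z_k}$ is again a lift of $\ov{x}$, and $\N(xt)=N_0t^3$ equals $s^4e^2$ or $-s^4e^2$, i.e. $\pm$ a perfect square. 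Using that $[k:\Q]=3$ is odd, so $\N(-y)=-\N(y)$, the lift $xt$ (when the sign is $+$) or the lift $-xt$ of $-\ov{x}$ (when the sign is $-$) has square norm; hence $\ov{x}\in Z_\c[N]$ or $\ov{x}\in(-1)Z_\c[N]$, as required.

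To pass to an arbitrary $\c\mid2\Z_k$ I would use the natural surjection $\pi\colon(\Z_k/4\Z_k)^*\to(\Z_k/\c^2)^*$, which is onto because units lift along $\c^2\mid4\Z_k$. It sends $-1$ to $-1$ and carries $Z_{2\Z_k}[N]$ into $Z_\c[N]$, since a square-norm lift of a class modulo $4\Z_k$ is also a square-norm lift of its reduction modulo $\c^2$. Applying $\pi$ to the identity $Z_{2\Z_k}=\langle-1\rangle Z_{2\Z_k}[N]$ just established then gives $Z_\c=\langle-1\rangle Z_\c[N]$ for every $\c$, completing the reduction and hence the dichotomy.

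Finally, for the ``in particular'' statement I would verify directly that $-1\notin Z_{2\Z_k}[N]$. The key elementary congruence is that $x\equiv1\pmod{4\Z_k}$ forces $\N(x)\equiv1\pmod4$, which follows by expanding $\N(1+4a)=1+4\Tr(a)+\cdots$ and reducing modulo $4$. Thus if $\be\equiv-1\pmod{4\Z_k}$ then $-\be\equiv1$, so $\N(\be)=-\N(-\be)\equiv-1\equiv3\pmod4$, which is never a perfect square; hence no $\be\equiv-1\pmod{4\Z_k}$ has square norm, and $z_k((2))=2$. I expect the main obstacle to be the construction in the case $\c=2\Z_k$: the crucial point is that one may correct the odd squarefree part of $\N(x)$ by a single rational squarefree multiplier $t\equiv1\pmod4$ and then absorb the leftover sign using that $k$ has odd degree. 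Everything else is formal transport along $\pi$ together with the one-line congruence modulo $4$.
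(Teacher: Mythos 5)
Your proof is correct and takes essentially the same route as the paper: both arguments show that every class in $Z_{\c}$ or its negative admits a square-norm lift (equivalently $Z_{\c}=\langle -1\rangle Z_{\c}[N]$), so that $z_k(\c)\le 2$ with equality failing precisely when $-1$ has a square-norm lift, and both settle $\c=(2)$ via the congruence $\be\equiv-1\pmod{4\Z_k}\Rightarrow\N(\be)\equiv-1\pmod{4}$. The only differences are in execution: the paper produces the square-norm representative as $\al/\N(\al)$, using $\N(\al)\equiv\pm1\pmod 4$, and runs the argument uniformly in $\c$, while you multiply by the signed squarefree part of $\N(x)$ and then push the statement from $\c=(2)$ to general $\c$ by the reduction surjection; your choice of multiplier has the minor virtue of keeping all lifts inside $\Z_k$, whereas $\al/\N(\al)$ is only $2$-integral and strictly speaking needs a further adjustment (e.g.\ scaling by an odd square) to meet the letter of Definition \ref{defsa4}.
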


\begin{proof}
Let $\ov{\al}\in(\Z_k/\c^2)^*$, and choose a lift of $\ov{\al}$ to 
$\Z_k$ which is coprime to $2$, which is always possible. Then 
$\N(\al)$ is odd hence $\N(\al)\equiv\pm1\pmod{4}$. If $\N(\al)\equiv1\pmod4$,
then $\al/\N(\al)\equiv\al\pmod{4}$ and a fortiori modulo $\c^2$, and
is of square norm. 

If $\be$ as described in the lemma exists, 
and $\N(\al)\equiv-1\pmod4$, then 
$\al\be/\N(\al)\equiv\al\pmod{\c^2}$ and is of square norm, so that $z_k(\c) = 1$.
If no such $\beta$ exists, then $-1$ 
lacks a lift of square norm so that $z_k(\c) \geq 2$. However,
for each $\overline{\alpha}$,
one of $\overline{\alpha}$ or 
$- \overline{\alpha}$ has a lift of square norm: again choose a lift $\alpha$ of $\overline{\alpha}$
coprime to $2$, and either $\N(\al) \equiv 1\pmod4$ and $\alpha/\N(\alpha) \equiv \alpha \pmod 4$ is
of square norm, or this is true with $\alpha$ replaced by $-\alpha$. Therefore $z_k(\c) = 2$.

For $\c = (2)$, the condition $\beta \equiv -1 \ (\textmod \ 4)$
implies $\N(\beta) \equiv -1 \ (\textmod \ 4)$, so that $\N(\beta)$ cannot be a
square. 
\end{proof}

\begin{lemma}\label{lem_sn} An element $\beta \in k$ is of square norm if and only if $\beta = \alpha/\N(\alpha)$
for some $\alpha \in k$.
\end{lemma}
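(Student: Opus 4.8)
The plan is to prove both directions by an elementary computation, the only structural input being that $k/\Q$ is cubic, so that the norm of a rational scalar $q \in \Q^*$ is $\N(q) = q^{[k:\Q]} = q^3$. Throughout I take $\beta \in k^*$, the case $\beta = 0$ being degenerate (here $\N(0) = 0$ is a square, but $0$ cannot be written as $\alpha/\N(\alpha)$).

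For the ``if'' direction, suppose $\beta = \alpha/\N(\alpha)$ with $\alpha \in k^*$. Using multiplicativity of the norm together with $\N(\N(\alpha)) = \N(\alpha)^3$ (as $\N(\alpha) \in \Q^*$ and $k$ is cubic), I compute
$$\N(\beta) = \frac{\N(\alpha)}{\N(\N(\alpha))} = \frac{\N(\alpha)}{\N(\alpha)^3} = \frac{1}{\N(\alpha)^2}\;,$$
which is a square in $\Q^*$, so $\beta$ has square norm.

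For the ``only if'' direction, suppose $\N(\beta) = q^2$ for some $q \in \Q^*$, and set $\alpha = \beta/q$. Then $\N(\alpha) = \N(\beta)/q^3 = q^2/q^3 = 1/q$, hence $\alpha/\N(\alpha) = (\beta/q)\cdot q = \beta$, exhibiting the required representation.

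There is essentially no obstacle here: the argument is a one-line calculation in each direction. The only points requiring care are the use of $\N(q) = q^3$, which is precisely where the cubic degree of $k$ enters (in a general degree-$n$ field the backward construction would instead require an $n$th-power adjustment of the scalar), and the implicit restriction to $\beta \neq 0$.
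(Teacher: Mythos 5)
Your proof is correct and is essentially the paper's own argument: the forward direction is the same one-line norm computation, and your converse choice $\alpha = \beta/q$ with $q^2 = \N(\beta)$ is exactly the paper's $\alpha = \beta/\sqrt{\N(\beta)}$, just written with the square root named explicitly. The only difference is that you spell out the details (including the degenerate case $\beta = 0$, which the definition of square norm already excludes since it requires $\N(\beta) \in \Q^*$); there is nothing to change.
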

\begin{proof} Given $\alpha$, we see immediately that $\beta$ is of square norm; conversely, given $\beta$,
take $\alpha = \beta/\sqrt{\N(\beta)}$. 
\end{proof}
Of course this is also equivalent to the condition that $\beta = \N(\alpha')/\alpha'$ for some $\alpha' \in k$.

\begin{lemma}\label{unramc} If $\p$ is an unramified prime ideal dividing $2$ 
then $z_k(2/\p)=1$.\end{lemma}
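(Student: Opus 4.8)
The plan is to reduce the claim to a norm computation modulo $4$ at the unramified prime $\p$, using the criterion of Lemma \ref{propm1}. By that lemma it is enough to exhibit $\be\in k^*$ of square norm with $\be\equiv-1\pmod{\c^2}$. By Lemma \ref{lem_sn} every square-norm element has the form $\al/\N(\al)$, so I would instead look for $\al\in\Z_k$ coprime to $2$ with $\al\equiv-1\pmod{\c^2}$ and $\N(\al)\equiv1\pmod4$. Indeed, since $\c^2\mid4\Z_k$ and $\N(\al)\in\Z$, the condition $\N(\al)\equiv1\pmod4$ gives $\N(\al)\equiv1\pmod{\c^2}$, whence $\be:=\al/\N(\al)$ has square norm and satisfies $\be\equiv\al\equiv-1\pmod{\c^2}$, as required.

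First I would record that $\p$ is prime to $\c$: being unramified it occurs to the first power in $2\Z_k$, so $\c=2\Z_k\p^{-1}$ is prime to $\p$ and $4\Z_k=\p^2\c^2$ with $\p^2,\c^2$ coprime. (If $2\Z_k=\p$ then $\c=\Z_k$ and the assertion is trivial.) The norm descends to a homomorphism $\N\colon(\Z_k/4\Z_k)^*\to(\Z/4\Z)^*$ (well defined since $\N(1+4\Z_k)\subseteq1+4\Z$), which under the Chinese Remainder decomposition $(\Z_k/4\Z_k)^*\cong(\Z_k/\p^2)^*\times(\Z_k/\c^2)^*$ equals the product of the local norms $N_{k_\q/\Q_2}$ over the primes $\q\mid2$. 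Imposing $\al\equiv-1\pmod{\c^2}$ fixes the local factors at the $\q\mid\c$ to a sign $\eps\in\{\pm1\}$ independent of the remaining freedom, while the factor at $\p$ is free to be any value in the image of $N_{k_\p/\Q_2}$.

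The crux is now that, because $k_\p/\Q_2$ is unramified, this local norm is surjective on units, so its reduction mod $4$ attains both elements of $(\Z/4\Z)^*$. Concretely $\p=2\Z_{k_\p}$, so for $x=2y\in\p$ one computes $N_{k_\p/\Q_2}(1+x)\equiv1+2\Tr_{k_\p/\Q_2}(y)\pmod4$; since the trace is surjective onto $\Z_2$ one may take $\Tr(y)$ odd and obtain a unit of local norm $\equiv-1\pmod4$. Hence I can choose the $\p$-component of $\al$ so that the total product equals $\eps^{-1}$, giving $\N(\al)\equiv1\pmod4$; the resulting $\al$ is a unit at every prime above $2$, hence coprime to $2$. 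This produces the desired $\be$ and shows $z_k(\c)=1$.

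The only genuine input is the surjectivity of the unramified local norm on units (equivalently, the explicit trace computation above); everything else is bookkeeping. I expect the only points to watch to be the identification of $\N\bmod4$ with the product of local norms and the verification that $\p$ is prime to $\c$, which I would check uniformly across the splitting types $(3)$, $(21)$, $(111)$, $(1^21)$ of $2$ in $k$.
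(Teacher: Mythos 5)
Your proposal is correct and follows essentially the same route as the paper's proof: both reduce the claim to Lemma \ref{propm1}, construct a square-norm element $\al/\N(\al)$ (the paper's $\N(\ga)/\ga$) congruent to $-1$ modulo $\c^2$, and hinge on the surjectivity of the unramified local norm at $\p$ on units. The only difference is bookkeeping: you glue the local choices via CRT on $(\Z_k/4\Z_k)^*$ together with an explicit trace computation, where the paper works in $k\otimes\Q_2\isom k_{\c}\times k_{\p}$ and invokes the approximation theorem to descend to a global element.
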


\begin{proof} Set $\c=2/\p$. Since $\p$ is unramified we have $\p\nmid\c$, 
so the inclusions $k \hookrightarrow k_{\p}$ and $k \hookrightarrow k_{\c}$
induce an isomorphism $k\otimes\Q_2\isom k_{\c}\times k_{\p}$; 
here
$k_{\p}$ is the completion of $k$ at $\p$ and $k_{\c}$ is isomorphic to the
product of the completions of $k$ at primes other than $\p$. Any element $\ga$
of $k\otimes\Q_2$ can thus be written in the form $(\ga_{\c},\ga_{\p})$ and
we have 
$$\N(\ga)=\N_{k_{\c}/\Q_2}(\ga_c)\N_{k_{\p}/\Q_2}(\ga_{\p})\;.$$
It follows that $\N(\ga)/\ga=(\al_{\c},\al_{\p})$ with
$$\al_{\c}=\dfrac{\N_{k_{\c}/\Q_2}(\ga_c)}{\ga_c}\N_{k_{\p}/\Q_2}(\ga_{\p})\;.$$
We choose $\ga_{\c}=1$. Furthermore, since $\p$ is unramified, we know that
the local norm from $k_{\p}$ to $\Q_2$ is surjective on units, so in particular
there exists $\ga_{\p}\in k_{\p}$ such that $\N_{k_{\p}/\Q_2}(\ga_{\p})=-1$.
It follows that for such a $\ga$, we have 
$\N(\ga)/\ga=(-1,u)$ for some $u\in k_{\p}$, and in particular the
local component at $\c$ of $\N(\ga)/\ga$ is equal to $-1$. By density (or,
equivalently, by the approximation theorem), we can find $\ga$ in the global
field $k$ such that $\N(\ga)/\ga\equiv-1\pmod{\c^m}$ for any $m\ge1$, and
in particular for $m=2$. If we set $\be=\N(\ga)/\ga$, it is clear that
$\be$ is of square norm and $\be\equiv-1\pmod{\c^2}$. We conclude thanks to
Lemma \ref{propm1}.\end{proof}

\begin{lemma}\label{codiff} Let $\al\in k$ be such that $\al/2\in\Z_k$ and
$\al/4\in\Gd^{-1}(k)$, where $\Gd^{-1}(k)$ denotes the codifferent of $k$.
Then the characteristic polynomial of $\al$ is congruent to $X^3$ modulo $4$,
and in particular the norm of $\al-1$ is not a square in $\Q$.\end{lemma}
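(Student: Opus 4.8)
The plan is to show that each non-leading coefficient of the characteristic polynomial $\chi_\al(X)=X^3-s_1X^2+s_2X-s_3$ is divisible by $4$, where $s_1=\Tr(\al)$ and $s_3=\NO(\al)$ are the trace and norm of $\al$, and $s_2$ is the middle elementary symmetric function of the conjugates of $\al$. The two outer coefficients are immediate. Taking $y=1$ in the defining property $\Tr\bigl((\al/4)\Z_k\bigr)\subseteq\Z$ of the codifferent gives $\Tr(\al)\in4\Z$, so $s_1\equiv0\pmod 4$; and writing $\al=2\be$ with $\be\in\Z_k$ gives $\NO(\al)=8\NO(\be)\equiv0\pmod 8$, so in particular $s_3\equiv0\pmod 4$.

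The middle coefficient is the crux, and is where I expect the only non-formal reasoning to occur. First I would invoke Newton's identity $s_2=\tfrac12\bigl(s_1^2-\Tr(\al^2)\bigr)$, which reduces the claim to controlling $\Tr(\al^2)$ modulo $8$, since $s_1\in4\Z$ already forces $s_1^2\in16\Z$. The key observation is the factorization $\al^2/8=(\al/2)(\al/4)$: here $\al/2\in\Z_k$ by hypothesis while $\al/4\in\Gd^{-1}(k)$, so the product lies in $\Z_k\cdot\Gd^{-1}(k)=\Gd^{-1}(k)$ because the codifferent is a $\Z_k$-module. Hence $\Tr(\al^2/8)\in\Z$, i.e.\ $\Tr(\al^2)\equiv0\pmod 8$, and combining this with $s_1^2\in16\Z$ yields $s_2=\tfrac12\bigl(s_1^2-\Tr(\al^2)\bigr)\in4\Z$. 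The point at which the two hypotheses on $\al$ genuinely interact is precisely this step, exploiting that an integral multiple of a codifferent element remains in the codifferent.

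Having shown $s_1\equiv s_2\equiv s_3\equiv0\pmod4$, we conclude $\chi_\al(X)\equiv X^3\pmod 4$. For the final assertion I would write $\NO(\al-1)=-\chi_\al(1)$, which follows from $\chi_\al(1)=\prod_i(1-\al_i)=(-1)^3\NO(\al-1)$. Evaluating the congruence at $X=1$ gives $\chi_\al(1)\equiv1\pmod4$, hence $\NO(\al-1)\equiv-1\equiv3\pmod4$. Since $\al-1\in\Z_k$, its norm is a rational integer, and an integer congruent to $3$ modulo $4$ is never a rational square; this completes the argument.
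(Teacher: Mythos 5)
Your proposal is correct and follows essentially the same route as the paper: both establish $\Tr(\al)\equiv0\pmod 4$ from the codifferent, $\N(\al)\equiv0\pmod 8$ from $\al/2\in\Z_k$, and the middle coefficient via $\Tr(\al^2)\equiv0\pmod 8$, which in both arguments comes from the product $(\al/2)(\al/4)=\al^2/8$ lying in $\Gd^{-1}(k)$, together with the Newton identity. The conclusion via $\N(\al-1)=-\chi_\al(1)\equiv-1\pmod 4$ is likewise the paper's argument, merely phrased by direct evaluation rather than through the characteristic polynomial of $\al-1$.
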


\begin{proof} Let $X^3-tX^2+sX-n$ be the characteristic polynomial of $\al$.
Since $\al/4\in\Gd^{-1}(k)$, we know that $t=\Tr(\al)\equiv0\pmod4$.
Furthermore, since $\al/2\in\Z_k$, we also have $\Tr(\al(\al/2))\equiv0\pmod4$,
hence $\Tr(\al^2)\equiv0\pmod8$, so $s=(\Tr(\al)^2-\Tr(\al^2))/2\equiv0\pmod4$.
Finally we have $n=\N(\al)\equiv0\pmod8$, and in particular $n\equiv0\pmod4$,
proving the first statement (note that in fact we can easily prove that 
$n\equiv0\pmod{16}$, but we do not need this). It follows that
the characteristic polynomial of $\al-1$ is congruent to $(X+1)^3$ modulo $4$,
hence that the norm of $\al-1$ is congruent to $-1$ modulo $4$, so cannot
be a square in $\Q$.\end{proof}

We can now finally compute $z_k(\c)$ in all cases:

\begin{theorem}\label{prop_compute_zkc} The definition of $z_k(\c)$ in
Proposition \ref{defsa4} \eqref{it_def_zkc_real} matches the explicit formula
of Definition \ref{def_zkc}.
\end{theorem}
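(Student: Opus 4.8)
The plan is to prove the theorem by running through the five splitting types of the prime $2$ in the cubic field $k$ — namely $(3)$, $(21)$, $(111)$, $(1^21)$, and $(1^3)$ — and, for each divisor $\c\mid 2\Z_k$, deciding whether $z_k(\c)=1$ or $z_k(\c)=2$. Throughout I use the criterion of Lemma \ref{propm1}: $z_k(\c)\in\{1,2\}$, and $z_k(\c)=1$ precisely when there is an element $\be$ of square norm with $\be\equiv-1\pmod{\c^2}$. Setting $\al=\be+1$, this is the same as asking for $\al\in\c^2$ with $\N(\al-1)$ a square, a reformulation that pairs well with Lemma \ref{codiff}. Two structural facts organize most of the work. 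First, monotonicity: if $\c_1\mid\c_2\mid2\Z_k$ then any square-norm $\be\equiv-1\pmod{\c_2^2}$ also satisfies $\be\equiv-1\pmod{\c_1^2}$, so $z_k(\c_1)\mid z_k(\c_2)$. Second, $z_k((2))=2$ by Lemma \ref{propm1}, and $z_k(2/\p)=1$ for every unramified prime $\p\mid2$ by Lemma \ref{unramc}.

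These two facts already settle most cases. Since $2\Z_k=(2)$, the divisor $\c=2\Z_k$ always gives $z_k=2$, which is exactly the first bullet of Definition \ref{def_zkc} in every splitting type. On the other hand, combining Lemma \ref{unramc} with monotonicity shows $z_k(\c)=1$ for every $\c$ dividing $2/\p$ with $\p$ unramified. This disposes entirely of the types $(3)$, $(21)$, and $(111)$, where every proper divisor of $2\Z_k$ divides some such $2/\p$, and in type $(1^21)$ it handles $\c\in\{\Z_k,\p_1,\p_1^2\}$ via $2/\p_2=\p_1^2$. The only divisors not covered this way are $\c=\p_1$ and $\c=\p_1^2$ in type $(1^3)$, which has no unramified prime above $2$, together with $\c=\p_2$ and $\c=\p_1\p_2$ in type $(1^21)$.

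For the two ``$z_k=2$'' cases $\c=\p_1^2$ (type $(1^3)$) and $\c=\p_1\p_2$ (type $(1^21)$), I would apply Lemma \ref{codiff} through the reformulation above: it suffices to check that every integral $\al\in\c^2$ satisfies the hypotheses $\al\in2\Z_k$ and $\al\in4\Gd^{-1}(k)$, for then $\N(\al-1)$ is never a square and $z_k(\c)=2$. This is a short valuation check at the primes above $2$, using that the different exponent $v_{\p_1}(\Gd)$ equals $2$ for the tame prime in type $(1^3)$ and is at least $2$ for the wild prime in type $(1^21)$; at all other primes $\al$ is integral while $4\Gd^{-1}(k)$ has nonpositive valuation, so the containment is automatic. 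The remaining case $\c=\p_1$ in type $(1^3)$ goes the other way and is elementary: any integral $\al\equiv-1\pmod{\p_1^2}$ is coprime to $2$, so $\N(\al)$ is odd and hence $\equiv1\pmod{\p_1^2}$, whence $\be=\al/\N(\al)$ has square norm by Lemma \ref{lem_sn} and satisfies $\be\equiv-1\pmod{\p_1^2}$, giving $z_k(\p_1)=1$.

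The one genuinely delicate case — and the main obstacle — is $\c=\p_2$ in type $(1^21)$, which is the only place where the answer depends on $\Disc(k)\bmod 8$. Here $\c^2=\p_2^2$ is not contained in $2\Z_k$, so Lemma \ref{codiff} does not apply, and I would instead argue locally. Writing $\Z_k\otimes\Q_2\isom k_{\p_1}\times k_{\p_2}$ with $k_{\p_2}\isom\Q_2$, a short computation with $\be=\al/\N(\al)$ shows that its $\p_2$-component equals $\N_{k_{\p_1}/\Q_2}(\al_{\p_1})^{-1}$, so that $z_k(\p_2)=1$ if and only if some element of $k_{\p_1}^*$ has norm $\equiv-1\pmod4$. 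The field $k_{\p_1}$ is a ramified quadratic extension of $\Q_2$ whose different exponent $v_{\p_1}(\Gd)$ equals $v_2(\Disc(k))\in\{2,3\}$, and local class field theory (e.g.\ a Hilbert-symbol computation) shows that the norm group contains a unit $\equiv-1\pmod4$ precisely when $v_{\p_1}(\Gd)=3$, i.e.\ when $\Disc(k)\equiv0\pmod8$. Thus $z_k(\p_2)=2$ exactly when $\Disc(k)\equiv4\pmod8$, matching the third bullet of Definition \ref{def_zkc} and completing the verification in all cases.
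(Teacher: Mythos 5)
Your proposal is correct and follows essentially the same route as the paper's proof: monotonicity of $z_k$ together with Lemmas \ref{propm1} and \ref{unramc} disposes of $\c=(2)$ and of all divisors in the unramified types (and of $\c\in\{\Z_k,\p_1,\p_1^2\}$ in type $(1^21)$), Lemma \ref{codiff} with the same valuation checks gives $z_k(\c)=2$ for $\c=\p^2$ in type $(1^3)$ and $\c=\p_1\p_2$ in type $(1^21)$, and the delicate case $\c=\p_2$ in type $(1^21)$ is treated by the same localization $k\otimes\Q_2\isom k_{\p_1}\times\Q_2$ and Lemma \ref{lem_sn}, reducing to whether some norm from $k_{\p_1}$ is $\equiv-1\pmod 4$. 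The only real deviation is the final verification of that norm condition, where you invoke a Hilbert-symbol/local class field theory criterion keyed to the different exponent rather than the paper's explicit enumeration of the six ramified quadratic extensions $\Q_2(\sqrt{D})$; your criterion is correct (for $d=2u'$ one of $(-1,d)_2$, $(3,d)_2$ equals $1$, while for a unit $d\equiv 3\pmod 4$ both equal $-1$), so this amounts to a slightly more conceptual packaging of the same computation.
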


\begin{proof} We have trivially $z_k((1))=1$, and by Lemma \ref{propm1} we
have $z_k((2))=2$, so we assume that $\c\ne(1)$ and $\c\ne(2)$. Lemma \ref{propm1}
also implies that $z_k(\c') \leq z_k(\c)$ for $\c' | \c$.

We consider
the possible splitting types of $2$ in $k$.\begin{itemize}
\item Assume that $2$ is unramified, and let $\p$ be a prime
ideal dividing $2/\c$. By Lemma \ref{unramc} we have $z_k(2/\p)=1$,
and since $\c\mid 2/\p$ we have $z_k(\c)\mid z_k(2/\p)$ so
$z_k(\c)=1$.
\item Assume that $2$ is totally ramified as $2\Z_k=\p^3$. If $\c=\p$
then $\be=1$ is of square norm and is such that $\be\equiv-1\pmod{\c^2}$, hence
$z_k(\p)=1$. If $\c=\p^2$, let $\be$ be any element such that 
$\be\equiv-1\pmod{\c^2}$, and set $\al=\be+1\in2\p$. Since $2$ is tamely 
ramified, we have $\Gd(k)=\p^2\a$ for some ideal $\a$ coprime to $2$. It 
follows that $\al/4\in\p^{-2}\subset\Gd^{-1}(k)$, and of course 
$\al/2\in\p\subset\Z_k$. Applying Lemma \ref{codiff}, we deduce that $\N(\be)$
cannot be a square, showing that $z_k(\p^2)=2$.
\item Assume that $2$ is partially ramified as $2\Z_k=\p_1^2\p_2$
and that $\p_1\mid\c$. Lemma \ref{unramc} implies that $z_k(\p_1^2)=1$, hence 
also $z_k(\p_1)=1$. If $\c=\p_1\p_2$, let $\be\equiv-1\pmod{\c^2}$, and set 
$\al=\be+1\in2\p_2$. Here $2$ is wildly ramified, so we also deduce
that $\Gd(k)=\p_1^2\a$ for some $\a$ not necessarily coprime to $2$.
As above, we deduce that 
$\al/4\in\p_1^{-2}\subset\Gd^{-1}(k)$ and $\al/2\in\p_2\subset\Z_k$, so that 
by Lemma \ref{codiff}, $\N(\be)$ cannot be a square, hence $z_k(\p_1\p_2)=2$.
\item Assume finally that $2$ is partially ramified as $2\Z_k=\p_1^2\p_2$
and that $\p_1\nmid\c$, in other words $\c=\p_2$. 
We use Lemma \ref{lem_sn} and the same local reasoning as for the
proof of Lemma \ref{unramc}. Setting $k_i=k_{\p_i}$, we can write 
$k\otimes\Q_2\isom k_1\times k_2$. If $\ga=(\ga_1,\ga_2)$
then as before we have $\N(\ga)/\ga=(\al_1,\al_2)$ with
$\al_2=N_{k_1/\Q_2}(\ga_1)$, 
since we can identify $k_2$ with $\Q_2$.
Since $2$ is a local uniformizer for the unramified prime ideal $\p_2$,
it follows that we have $z_k(\p_2)=1$ if and only if we can solve
$\al_2\equiv-1\pmod4$, hence if and only if there exists a norm congruent
to $-1$ modulo $4$ in the local ramified extension $k_1/\Q_2$. Up to
isomorphism, there are $6$ possible such extensions $k_1=\Q_2(\sqrt{D})$
with $D=-4$, $-8$, $-24$, $8$, $12$, and $24$ (the extension corresponding
to $D=-3$ is of course unramified). By inspection\footnote{For example,
$1^2 + 6 \cdot 1^2 = 7 \equiv -1 \pmod 8$, and as the conductor of $\Q_2(\sqrt{-6}) = \Q_2(\sqrt{-24})$
is $2^3$, this congruence can be lifted to a solution in $\Q_2$.}, we see that $-1$ is
a norm in the extensions corresponding to $D=-24$ and $D=8$, and that $3$
is a norm in the extensions corresponding to $D=-8$ and $D=24$. As $D\equiv 0 \ (\textmod \ 8)$ if and only if 
$\Disc(k) \equiv 0 \ (\textmod \ 8)$, this finishes the proof of the theorem 
in the case $\Disc(k) \equiv 0 \ (\textmod \ 8)$.

In case $\Disc(k) \equiv 4 \ (\textmod \ 8)$, we have $D = -4$ or $D = 12$,
and since the equations $x^2+y^2=-1$ and $x^2-3y^2=-1$ are not soluble in
$\Q_2$ (they do not have solutions modulo $4$) it follows that $-1$ is not a 
norm, so that $z_k(\p_2)=2$.
\end{itemize}
\end{proof}

Replacing the value of $z_k(\c)$ given by Definition \ref{def_zkc} in
the formula for $|S_{\c^2}[N]|/|C_{\c^2}|$ given in Corollary \ref{corsc2}
and then in the formula for $\Phi_k(s)$ given by Corollary \ref{corphik}
finishes the proof of Theorem \ref{thma4s4}.

\section{Study of the Groups $C_{\c^2}$}\label{sec_c}

We denote by $\rk_2(k)$ the $2$-rank of $\Cl(k)$, and by $\rk^+_2(k)$ the
$2$-rank of the narrow class group $\Cl^+(k)$.

\begin{proposition}\label{prop_c4}
Let $C_{(4)}$ be defined as in Definition \ref{def_zkc}.

\begin{enumerate}[1.]
\item\label{it_iso}
The map $\phi: \mfa \rightarrow \mfa / \N(\mfa)$ induces isomorphisms
\begin{equation}\label{eqn_1_iso}
\Cl(k) / \Cl(k)^2
\xrightarrow{\phi}
C_{(1)}\;,
\end{equation}
\begin{equation}\label{eqn_4_iso}
\Cl_{(4)}(k) / \Cl_{(4)}(k)^2
\xrightarrow{\phi}
C_{(4)}\;,
\end{equation}
with the inverse map induced by $\phi^{-1}: \mfb \rightarrow \mfb / \sqrt{N(\mfb)}$.

\item\label{it_size}
We have either $|C_{(4)}| = 2 |C_{(1)}|$ or $|C_{(4)}| = |C_{(1)}|$. Moreover, $|C_{(4)}| = |C_{(1)}|$ if and only if
$k$ is totally real and $\rk_2^+(k) = \rk_2(k)$, i.e., if and only if $k$ is
totally real and there does not exist a nonsquare totally positive unit.
\item\label{it_all_iso}
If $\c' | \c$ and $|C_{\c^2}| = |C_{(1)}|$ then $C_{\c'^2} \isom C_{(1)}$ also; and if
$\c' | \c$ and $|C_{\c'^2}| = |C_{(4)}|$ then $C_{\c'^2} \isom C_{(4)}$ also.

In particular, if $|C_{(1)}| = |C_{(4)}|$ then $C_{\mfc^2} \isom \Cl(k) / \Cl(k)^2 \isom 
\Cl_{(4)}(k) / \Cl_{(4)}(k)^2$ for every $\mfc | (2)$.
\end{enumerate}
\end{proposition}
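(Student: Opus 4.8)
The plan is to take the three assertions in order, treating (1) and (3) as essentially formal and concentrating the real arithmetic in (2).

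For (1) I would first record that $\phi$ is a homomorphism on fractional ideals: since $\N$ is multiplicative, $\phi(\a\b)=\phi(\a)\phi(\b)$, and $\N(\a/\N(\a))=\N(\a)^{-2}$ is always a rational square, so $\phi$ lands in the group of ideals of square norm; it restricts to ideals coprime to $2$ since these have odd norm. A one-line norm computation shows $\phi\circ\phi^{-1}$ and $\phi^{-1}\circ\phi$ are the identity, so $\phi$ is a bijection between all fractional ideals (resp.\ those coprime to $2$) and those of square norm. The only real point is that $\phi$ respects the subgroups cutting out the two quotients. A principal ideal $(\g)$ maps to $(\g/\N(\g))$, and because $k$ has odd degree we have $\N(-1)=-1$, so we may take the generator $\g/\N(\g)$, whose norm $\N(\g)^{-2}$ is a genuine square; this is precisely the $S[N]\isom S(k)/\{\pm1\}$ sign-absorption recorded in Proposition \ref{prop:exseq}. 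For the $(4)$-version I note in addition that $\g\equiv1\pmod{^*(4)}$ forces $\N(\g)\equiv1\pmod4$ (apply the norm homomorphism to the reduction in $(\Z_k/4\Z_k)^*$), whence $\g/\N(\g)\equiv1\pmod{^*(4)}$, again up to the harmless unit $-1\equiv-1\pmod4$; and squares of ideals map to squares. Thus $\phi$ carries the denominator of $\Cl(k)/\Cl(k)^2$ (resp.\ $\Cl_{(4)}(k)/\Cl_{(4)}(k)^2$) into that of $C_{(1)}$ (resp.\ $C_{(4)}$), and $\phi^{-1}$ does the reverse, giving both isomorphisms.

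For (2) I would specialize the exact sequence \eqref{eqn:exseq} to $\c=(2)$,
\[
1\LR S_{(4)}[N]\LR S[N]\LR Z_{(2)}[N]/Z_{(2)}^2\LR C_{(4)}\LR C_{(1)}\LR1\;,
\]
which already displays $C_{(1)}$ as a quotient of $C_{(4)}$. By Lemma \ref{easyzk1} together with $z_k((2))=2$ (Lemma \ref{propm1}), the middle term has order $\N((2))/z_k((2))=8/2=4$, so $|C_{(4)}|/|C_{(1)}|=4/[S[N]:S_{(4)}[N]]$ and the kernel of $C_{(4)}\twoheadrightarrow C_{(1)}$ is a subquotient of $(\Z/2)^2$. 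Everything now reduces to identifying the image of the Selmer group $S[N]$ in $(\Z_k/4\Z_k)^{*}[N]$ modulo squares, i.e.\ to which square-norm virtual units are squares mod $4$. I expect this to be the main obstacle: one must show the index $[S[N]:S_{(4)}[N]]$ is always $\ge2$ (yielding the upper bound $|C_{(4)}|\le2|C_{(1)}|$) and equals $4$ exactly when $k$ is totally real with no nonsquare totally positive unit. That analysis is an examination of the signs of units at the real places — the reason $\rk_2^+(k)$ enters — and it collapses immediately in the complex-cubic case, where $V^+(k)=V[N]$ forces the ratio to be $2$.

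Finally, (3) is a squeezing argument once (2) is in hand. For $\c'\mid\c\mid(2)$ the inclusion of ideal groups together with the implication $\be\equiv1\pmod{^*\c^2}\Rightarrow\be\equiv1\pmod{^*\c'^2}$ gives natural maps $C_{\c^2}\to C_{\c'^2}$, surjective by the same approximation-theorem argument that proves surjectivity of the last map in \eqref{eqn:exseq}; these assemble into a chain $C_{(4)}\twoheadrightarrow C_{\c^2}\twoheadrightarrow C_{\c'^2}\twoheadrightarrow C_{(1)}$. By (2) the two ends differ by a factor at most $2$, so any hypothesis equating an interior order with an end forces all intervening surjections to be isomorphisms of equal finite groups: if $|C_{\c^2}|=|C_{(1)}|$ then $C_{\c'^2}\twoheadrightarrow C_{(1)}$ is squeezed to an isomorphism, and if $|C_{\c'^2}|=|C_{(4)}|$ then $C_{(4)}\twoheadrightarrow C_{\c'^2}$ is. The ``in particular'' is the case $|C_{(1)}|=|C_{(4)}|$, where the whole chain collapses for every $\c\mid(2)$; combining with (1) then identifies each $C_{\c^2}$ with both $\Cl(k)/\Cl(k)^2$ and $\Cl_{(4)}(k)/\Cl_{(4)}(k)^2$.
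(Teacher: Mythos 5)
Your parts (1) and (3) follow the paper's route, but part (2) contains a genuine gap: after correctly specializing the exact sequence \eqref{eqn:exseq} at $\c=(2)$ and computing $|Z_{(2)}[N]/Z_{(2)}^2|=4$, you reduce everything to the index $[S[N]:S_{(4)}[N]]$ and then merely announce what must be proved --- that this index is always at least $2$, and equals $4$ exactly when $k$ is totally real with no nonsquare totally positive unit. That announcement is the entire content of (2); neither your gesture at ``an examination of the signs of units at the real places'' nor the claim that the complex case ``collapses immediately'' from $V^+(k)=V[N]$ is a proof. The paper's argument here is not a direct Selmer computation at all: the lower bound on the index is proved by contradiction via Kummer--Hecke theory (if $S_{(4)}[N]=S[N]$, then by Proposition \ref{prop:hec} every square-norm virtual unit generates a quadratic extension of relative discriminant $(1)$, whereas $|C_{(4)}|>|C_{(1)}|$ would produce, by Lemma \ref{lem_square}, extensions of nontrivial square discriminant dividing $(4)$ that are generated by square-norm virtual units --- a contradiction); and the characterization is proved by counting: the $1^s$ coefficient of $a(k)2^{r_2(k)}\Phi_k(s)$ equals $|C_{(4)}|$ and counts trivial-norm quadratic extensions unramified at all finite places, while $|C_{(1)}|$ counts those also unramified at infinity, so in the complex case the factor $2^{r_2(k)}=2$ alone forces $|C_{(4)}|>|C_{(1)}|$, and in the totally real case equality holds iff $\rk_2^+(k)=\rk_2(k)$, which the ray class group sequence converts into the statement about totally positive units. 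Tellingly, the authors remark right after this proposition that they could not find a simple direct proof of the key corollary (existence of a square-norm virtual unit coprime to $2$ that is not $\equiv 1 \pmod{4\Z_k}$), so the direct analysis you defer is, at best, not routine.

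A smaller issue in part (1): you check that $\phi$ maps the relevant denominators forward and then assert that ``$\phi^{-1}$ does the reverse.'' Since $\phi$ bijects the ambient ideal groups, the forward inclusion only yields a surjection of quotients; injectivity is exactly the reverse inclusion, and for $C_{(4)}$ it is not symmetric to what you checked. One must take a denominator element $\q^2\be$ with $\be\equiv1\pmod{^*4\Z_k}$ of square norm and show that $\phi^{-1}(\q^2\be)=\q^2\be/(\N(\q)\sqrt{\N(\be)})$ lies in the product of $\Cl_{(4)}(k)^2$ with the principal ideals having generator $\equiv1\pmod{^*4\Z_k}$; the point, written out in the paper, is that the rational number $a=\N(\q)\sqrt{\N(\be)}$ is only guaranteed to be $\equiv\pm1\pmod{^*4}$, and one uses $a\Z_k=(-a)\Z_k$ to correct the sign. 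This step is fillable, but it is the only nonformal content of (1). Part (3) (natural surjections plus squeezing on orders) agrees with the paper's one-line proof.
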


\begin{proof}\vfill

\eqref{it_iso}. It is easily checked that $\phi^{-1}$ and $\phi$ yield
inverse bijections between the group of fractional ideals of $k$ of square 
norm and all fractional ideals of $k$. Both $\phi$ and $\phi^{-1}$ map 
principal ideals to principal ideals, and $\phi$ maps any square ideal to a
square ideal. Also it is clear that $\al \equiv 1 \ (\textmod \ 4\Z_k)$ 
implies $\N(\al)\equiv1\pmod4$,
so the maps \eqref{eqn_1_iso} and \eqref{eqn_4_iso} are well-defined homomorphisms. Conversely, it is
clear that the map $\phi^{-1}$ from $C_{(1)}$ to $\Cl(k)/\Cl(k)^2$ is also
well-defined. Consider $\phi^{-1}$ on $C_{(4)}$:  we have
$\phi^{-1}(\q^2\be)=\q^2\be/\N(\q)\sqrt{\N(\be)}$, and since 
$\be\equiv1\pmod{^*4\Z_k}$, the class of $\phi^{-1}(\q^2\be)$ in
$\Cl_{(4)}(k)/\Cl_{(4)}(k)^2$ is the same as that of $a\Z_k$ with
$a=\N(\q)\sqrt{\N(\be)}$. However $a\in\Q$, so $a\equiv\pm1\pmod{^*4}$,
and $a\Z_k=-a\Z_k$, so the class of $\phi^{-1}(\q^2\be)$ is trivial,
proving that $\phi^{-1}$ is a well-defined map from $C_{(4)}$ to
$\Cl_{(4)}(k)/\Cl_{(4)}(k)^2$, proving \eqref{it_iso}.
\\
\\
\eqref{it_size}. By Lemmas \ref{easyzk1} and \ref{propm1} we have
$|Z_{(2)}[N]/Z_{(2)}^2|=8/2=4$, so by the exact sequence of Proposition 
\ref{prop:exseq} $|C_{(4)}|$ is equal to $|C_{(1)}|$, $2 |C_{(1)}|$, 
or $4|C_{(1)}|$. However, it cannot be $4|C_{(1)}|$: if it were, by the same
exact sequence this would imply that $S_{(4)}[N] = S[N]$ , so
that if any $u \in k$, coprime to $2$, represents an element of $S[N]$, the
equation $x^2 \equiv u \ (\textmod \ {}^* 4)$
is soluble. By Proposition \ref{prop:hec} this implies that each element of 
$S[N]$ yields a quadratic extension of $k$ of discriminant $(1)$;
however, if $|C_{(4)}| > |C_{(1)}|$, then there exist quadratic extensions 
of $k$ of nontrivial square discriminant dividing $(4)$ and unramified at 
infinity, hence by Lemma \ref{lem_square} generated by virtual units of square
norm, a contradiction.

For the second statement, note that the $1^s$ coefficient of 
$a(k)2^{r_2(k)} \Phi_k(s)$ is equal to $|C_{(4)}|$. 
This may be seen from Theorem \ref{thma4s4}, or equivalently, it is a 
consequence of various computations in this section.
This $1^s$ coefficient counts the number of quadratic or trivial extensions 
$K_6 =k(\sqrt{\al})$, where $\al$ is of square norm, and $K_6/k$
is unramified at all finite places. 

By \eqref{it_iso}, $|C_{(1)}|$ counts the number of such extensions $K_6/k$ 
which are also unramified at infinity. Thus, in the totally real case, 
$|C_{(4)}| > |C_{(1)}|$ if and only if $\rk_2^+(k) \neq \rk_2(k)$. In the
complex case, due to the factor of $2^{r_2(k)} = 2$, we necessarily
have $|C_{(4)}| > |C_{(1)}|$. 

By the ray class group exact sequence, we have $\rk^+_2(k)=\rk_2(k)$ if and 
only if $\Cl^+(k)=\Cl(k)$ if and only if $[U(k):U^+(k)]=2^{r_1}=[U(k):U^2(k)]$,
and since $U^2(k)\subset U^+(k)$, if and only if $U^+(k)=U^2(k)$, proving our
claim.
\\
\\
\eqref{it_all_iso}. This follows from the existence of natural surjections
$C_{\mfc} \LR C_{\mfc'}$ for $\mfc' | \mfc$.
\end{proof}

The following is independent of the rest of this paper but is an immediate
consequence of the above proof. It seems that there should exist a simple direct proof of this corollary, which would 
provide a simpler proof of \eqref{it_size} above, but we did not find such a proof.

\begin{corollary} Let $k$ be a cubic field. There exists a virtual unit $u$
coprime to $2$ and of square norm such that $u\not\equiv1\pmod{4\Z_k}$.
\end{corollary}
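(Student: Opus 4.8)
The plan is to read this corollary off from the proof of Proposition~\ref{prop_c4}\eqref{it_size}, which already contains the essential content. There we showed that $|C_{(4)}|\ne 4|C_{(1)}|$; on the other hand, in the exact sequence of Proposition~\ref{prop:exseq} with $\c=(2)$ the middle term $Z_{(2)}[N]/Z_{(2)}^2$ has order $4$, so the alternating product of orders gives $|C_{(4)}|=\frac{4\,|S_{(4)}[N]|}{|S[N]|}\,|C_{(1)}|$. Hence $S_{(4)}[N]=S[N]$ would force $|C_{(4)}|=4|C_{(1)}|$, and since the latter is excluded, the inclusion $S_{(4)}[N]\subseteq S[N]$ must be \emph{strict}. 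Concretely, this yields a class $\ov{u}\in S[N]\setminus S_{(4)}[N]$, i.e. a virtual unit $u\in V[N]$ for which the congruence $x^2\equiv u\pmod{^*(4)}$ has no solution in $k$.

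Next I would arrange that the chosen representative $u$ is coprime to $2$. This is harmless: writing $u\Z_k=\q^2$, I replace $u$ by $u\ga^2$, where $\ga\in k^*$ is produced by weak approximation so that $v_{\p}(\ga)=-v_{\p}(\q)$ at the finitely many primes $\p\mid2$. This changes neither the class $\ov{u}\in S[N]=V[N]/{k^*}^2$ nor the square-norm property (since $\NO(u\ga^2)=\NO(u)\NO(\ga)^2$), keeps $u$ a virtual unit, and makes it coprime to $2$. Moreover the solubility of $x^2\equiv u\pmod{^*(4)}$ is unaffected, because, as recorded in Definition~\ref{defsa4}, this Selmer condition depends only on $\ov{u}$ and not on the lift.

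It then remains to observe that $u\not\equiv1\pmod{4\Z_k}$: if we had $u\equiv1\pmod{4\Z_k}$, then $x=1$ would solve $x^2\equiv u\pmod{^*(4)}$, contradicting $\ov{u}\notin S_{(4)}[N]$. This exhibits a virtual unit $u$, coprime to $2$ and of square norm, with $u\not\equiv1\pmod{4\Z_k}$, as required. I expect no genuine obstacle here beyond unwinding the definitions; the only point needing a little care is the coprimality adjustment, where one must confirm that multiplying by a square preserves membership in $V[N]$ and the (non)solubility of the congruence modulo $^*(4)$ — both immediate from the square-norm identity and the lift-independence of the Selmer conditions.
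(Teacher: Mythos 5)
Your proposal is correct and follows essentially the same route as the paper: both deduce from $|C_{(4)}|\ne4|C_{(1)}|$ (Proposition \ref{prop_c4}) and the exact sequence of Proposition \ref{prop:exseq} that $S_{(4)}[N]\to S[N]$ is not surjective, represent a missing class by a virtual unit coprime to $2$, and conclude since $u\equiv1\pmod{4\Z_k}$ would make $x=1$ a solution of $x^2\equiv u\pmod{^*(4)}$. Your write-up merely fills in details (the order count giving $|C_{(4)}|=\tfrac{4|S_{(4)}[N]|}{|S[N]|}|C_{(1)}|$ and the weak-approximation adjustment) that the paper leaves implicit.
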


\begin{proof} Indeed, since the class of a virtual unit in $S(k)$ can always
be represented by a virtual unit coprime to $2$, the statement is equivalent
to the statement that the natural injection from $S_{(4)}[N]$ to $S[N]$
is not surjective, and by the exact sequence of Proposition
\ref{prop:exseq}, that $|C_{(4)}|\ne4|C_{(1)}|$, proved above.
\end{proof}

In most cases, Proposition \ref{prop_c4} will suffice to handle the groups $C_{\c^2}$, but in two special cases
where $C_{\c^2} \simeq C_{(4)}$, we will need to evaluate $\chi(\mfa)$ for characters $\chi \in X_{\c^2}$
on ideals $\mfa$ which are coprime to $\c^2$ but not $4$. For this 
we require the following refinement. 

\begin{proposition}\label{prop_c4_ext}
Suppose that $(2) = \c \c'$ with $\c$ and $\c'$ coprime and squarefree and $\N(\c') = 4$; namely, either
\begin{itemize}
\item $(2) = \p_1 \p_2$ with each $\p_i$ of degree $i$, $\c = \p_1$, and $\c' = \p_2$, or
\item $(2) = \p_1 \p_2 \p_3$ with each $\p_i$ of degree $1$, $\c = \p_1$, and $\c' = \p_2 \p_3$.
\end{itemize}
Then, the map $\mfa \rightarrow \mfa / \N(\mfa)$ induces an isomorphism 
\begin{equation}\label{eqn_4c_iso}
\Cl_{\c'^2}(k) / \Cl_{\c'^2}(k)^2
\xrightarrow{\phi}
C_{\c^2}\;,
\end{equation}
which agrees with \eqref{eqn_4_iso} on ideals coprime to $(2)$, and for which $\phi(\p_1^2) = 4/\p_1^2$.
\end{proposition}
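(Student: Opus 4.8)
The plan is to imitate the proof of the isomorphism \eqref{eqn_4_iso} in Proposition \ref{prop_c4}, the essential new feature being that the source and target now carry \emph{different} moduli, $\c'^2$ and $\c^2$, linked by the coprime factorization $(2)=\c\c'$. Recall from Proposition \ref{prop_c4} that $\phi\colon\mfa\mapsto\mfa/\N(\mfa)$ is a bijection from the group of fractional ideals coprime to $2$ onto the group of square-norm fractional ideals coprime to $2$, with inverse $\mfb\mapsto\mfb/\sqrt{\N(\mfb)}$, carrying principal ideals to principal ideals and squares to squares. I represent every class of $\Cl_{\c'^2}(k)$ by an ideal coprime to $2$ (possible by weak approximation, since $\c=\p_1$ is coprime to $\c'$) and set $\bar\phi([\mfa])=[\mfa/\N(\mfa)]$; as $\mfa/\N(\mfa)$ is then coprime to $2$, hence to $\c$, and of square norm, it represents a class in $C_{\c^2}=\Cl_{\c^2}[N]/D_{\c^2}[N]$.

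The first and technically cleanest step is to check that $\bar\phi$ is a well-defined homomorphism, i.e.\ that $\phi$ sends the defining relations of $\Cl_{\c'^2}(k)/\Cl_{\c'^2}(k)^2$ into those of $C_{\c^2}$. Squares of ideals map to squares, which lie in $D_{\c^2}[N]$, so the crux is the principal relations: if $\gamma\equiv1\pmod{^*\c'^2}$ with $(\gamma)$ coprime to $2$, I claim $\gamma/\N(\gamma)\equiv1\pmod{^*\c^2}$ (it automatically has square norm). This I verify $2$-adically. Since $\c,\c'$ are coprime with $\c\c'=(2)$, we have $k\otimes\Q_2\isom k_{\p_1}\times\prod_{\p\mid\c'}k_\p$ with $k_{\p_1}\isom\Q_2$, and $\N(\gamma)=\gamma_{\p_1}\prod_{\p\mid\c'}\N_{k_\p/\Q_2}(\gamma_\p)$. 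All primes dividing $\c'$ are unramified over $2$, and $\gamma\equiv1\pmod{\c'^2}$ forces $\gamma_\p\equiv1\pmod4$ in each such $k_\p$, whence $\N_{k_\p/\Q_2}(\gamma_\p)\equiv1\pmod4$; thus the $\c'$-part of the norm is $\equiv1\pmod4$. Reading $\gamma/\N(\gamma)$ in the $\p_1$-component, where $\N_{k_{\p_1}/\Q_2}$ is the identity, gives $(\gamma/\N(\gamma))_{\p_1}=\bigl(\prod_{\p\mid\c'}\N_{k_\p/\Q_2}(\gamma_\p)\bigr)^{-1}\equiv1\pmod4$, which is exactly $\gamma/\N(\gamma)\equiv1\pmod{\c^2}$. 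Surjectivity of $\bar\phi$ is then immediate: as in the proof of Proposition \ref{prop:exseq}, any class of $C_{\c^2}$ has a representative $\mfb$ coprime to $2$, and $\phi^{-1}(\mfb)=\mfb/\sqrt{\N(\mfb)}$ is a coprime-to-$2$ preimage.

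The main obstacle is injectivity, precisely because the target modulus $\c^2$ (of norm $4$) is smaller than the source modulus $\c'^2$ (of norm $16$), so a priori $\phi$ could lose information at the primes dividing $\c'$. I plan to handle this in one of two equivalent ways. The direct way is to prove the reverse statement: if $\be\equiv1\pmod{^*\c^2}$ has square norm, then $\phi^{-1}((\be))$ is trivial in $\Cl_{\c'^2}(k)/\Cl_{\c'^2}(k)^2$. Here the square-norm hypothesis is indispensable: by Lemma \ref{lem_sn} we may write $\be=\alpha/\N(\alpha)$, so that $\phi^{-1}((\be))=(\alpha)$, while $\be\equiv1\pmod{\c^2}$ translates (again using $k_{\p_1}\isom\Q_2$) into $\N_{k_{\p_2}/\Q_2}(\alpha_{\p_2})\equiv1\pmod4$; one then shows that such $\alpha$ yields a trivial class modulo $\c'^2$ and squares, which is where the unit group and the oddness of $[k:\Q]$ enter. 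The alternative, which I expect to be more robust, is a counting argument: combine the surjectivity just proved with the equality $|\Cl_{\c'^2}(k)/\Cl_{\c'^2}(k)^2|=|C_{\c^2}|$, obtained by comparing the exact sequence of Proposition \ref{prop:exseq} for $C_{\c^2}$ (with $\N(\c)=2$ and $z_k(\c)=1$) against the analogous ray-class sequence for $\Cl_{\c'^2}(k)/\Cl_{\c'^2}(k)^2$; a surjective homomorphism of finite groups of equal order is an isomorphism.

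Finally, the two asserted compatibilities are routine. For $\mfa$ coprime to $2$ the formula $\phi(\mfa)=\mfa/\N(\mfa)$ is literally the one defining \eqref{eqn_4_iso}, and such $\mfa$ has a well-defined class in both $\Cl_{(4)}(k)$ and $\Cl_{\c'^2}(k)$, so $\bar\phi$ agrees with \eqref{eqn_4_iso} there. For $\p_1^2$, which is coprime to $\c'$, we get $\phi(\p_1^2)=\p_1^2/\N(\p_1^2)=\p_1^2/4=(\c')^{-2}$; since $C_{\c^2}$ has exponent $2$ this class equals its inverse $(\c')^2=4/\p_1^2$, an ideal coprime to $\c$ of square norm, giving the stated value.
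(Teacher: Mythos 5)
Your well-definedness step is essentially the paper's own argument (the authors likewise check that $\al\equiv1\pmod{^*\c'^2}$ forces $\be:=\al/\N(\al)$ to be coprime to $\c$ with $\pm\be\equiv1\pmod{^*\p_1^2}$, hence $(\be)$ trivial in $C_{\c^2}$; your local computation just makes this explicit), and the two compatibility statements at the end are fine. The problems are in your two remaining pillars, which are exactly the points where the paper is forced to do something non-obvious. For surjectivity you assert that every class of $C_{\c^2}$ has a representative coprime to $2$, ``as in the proof of Proposition \ref{prop:exseq}.'' That proof does not transfer: there one replaces $\a$ by $(\be/\N(\be))\,\a$ for an approximation-chosen $\be$, which is harmless in $\Cl[N]$ because the denominator consists of \emph{all} principal ideals with square-norm generator. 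In $C_{\c^2}$ the denominator $D_{\c^2}[N]$ also imposes the congruence $\equiv1\pmod{^*\c^2}$, so this multiplication generally changes the class; to preserve it one must in addition arrange $\be/\N(\be)\equiv1\pmod{^*\p_1^2}$, i.e.\ control $\prod_{\p\mid\c'}\N_{k_\p/\Q_2}(\be_\p)$ modulo $4$, which requires a further local-norm argument at the unramified primes above $2$ (in the spirit of the proof of Lemma \ref{unramc}). The claim is in fact true in the two cases at hand, but only after that extra argument; relocating the difficulty into this representative-moving step is precisely what the authors mean when they say it ``seems difficult to work with $\phi^{-1}$ directly,'' and it is why they instead prove surjectivity by exhibiting an explicit class: they take $\al\equiv3\pmod{^*\p_1^2}$ coprime to $2$, show $(\al/\N(\al))$ lies in the image, is trivial in $C_{(1)}$ but nontrivial in $C_{\c^2}$, and combine this with surjectivity onto $C_{(1)}$.

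The second gap is the cardinality input. The equality $|\Cl_{\c'^2}(k)/\Cl_{\c'^2}(k)^2|=|C_{\c^2}|$ does not fall out of ``comparing the exact sequence of Proposition \ref{prop:exseq} against the analogous ray-class sequence.'' Unwinding both sides (using $|Z_{\c}[N]/Z_{\c}^2|=\N(\c)/z_k(\c)=2$ on one side and $|(\Z_k/\c'^2)^*/((\Z_k/\c'^2)^*)^2|=\N(\c')=4$ on the other), the asserted equality is equivalent to a nontrivial arithmetic relation between the index $[S[N]:S_{\c^2}[N]]$ and the image of the unit group in $(\Z_k/\c'^2)^*$ modulo squares; nothing formal forces these to compensate. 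The paper takes this equality from Proposition \ref{prop_list_c}, whose proof requires the machinery of Sections \ref{sec_xnew}--\ref{sec_arith_s4} and is carefully noted (in the footnote) not to depend on the present proposition. Your alternative ``direct'' injectivity route is likewise only a sketch (``one then shows\dots'') and secretly needs the same input. So your architecture can be salvaged, but both the coprime-representative claim and the equal-cardinality claim must be replaced by genuine arguments --- either the paper's, or fully worked-out versions of yours.
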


\begin{proof}
Because $2$ is unramified and $\N(\c) = 2$, the map $\mfa \rightarrow \mfa/\N(\mfa)$ sends ideals coprime
to $\c'$ to ideals coprime to $\c$.
The map $\mfb \rightarrow \mfb/\sqrt{\N(\mfb)}$ does {\itshape not} necessarily
send ideals coprime to $\c$ to ideals coprime to $\c'$, for example if $(2) = \p_1 \p_2 \p_3$, $\c = \p_1$, and
$\b = \p_2^2$. 

As before we argue that $\phi$ is well defined:
If $\alpha \equiv 1 \pmod{^* \c'^2}$, then $\beta := \alpha / \N(\alpha)$ must be coprime to $2$
(the $\mfp_1$-adic valuations of $\alpha$ and $\N(\alpha)$ must be equal), and we must have $\pm \beta \equiv 1
\pmod{^* \p_1^2}$, so that $(\beta)$ represents the trivial class of $C_{\c^2}$.

It seems difficult to work with $\phi^{-1}$ directly, so 
we apply the fact that {\itshape both groups in \eqref{eqn_4c_iso} are of the same size}\footnote{This fact is
proved in
Proposition \ref{prop_list_c}, which does not in turn rely on the proposition being proved. We might have
waited until Section \ref{sec_quartic_proofs} to give the current proof, but we placed it here for readability's sake.}, 
and prove that $\phi$ is surjective. By \eqref{eqn_4c_iso}, we know that these
groups are either the same size as $\Cl(k)/\Cl(k)^2$ and $C_{(1)}$ respectively, in which case they
are canonically isomorphic to them and the result is immediate, or else these groups are twice as large
as $\Cl(k)/\Cl(k)^2$ and $C_{(1)}$.

In the latter case choose $\alpha \equiv 3 \pmod{^* \p_1^2}$ coprime to $2$, and if needed
(as in Section \ref{sec_sig})
take $\alpha$ to be totally positive by adding a large multiple of $4$.
Then
$\beta := \alpha / \N(\alpha)$ is a totally positive element of square norm with $\beta \equiv 3 \pmod{^* \p_1^2}$,
with $(\beta)$ in the image of \eqref{eqn_4c_iso}, and
which represents a nontrivial class of $C_{\p_1^2}$ but the trivial class of $C_{(1)}$. 
Since $C_{\p_1^2}$ is twice as large as $C_{(1)}$, we conclude that \eqref{eqn_4c_iso} is surjective.
\end{proof}

\section{Splitting Types in $k$, $K_6$, and $L$}\label{sec_xnew}

By the results of the previous section, together with class field theory, we can translate characters
of $C_{\c^2}$ into characters of Galois groups of quadratic extensions. It is therefore important to understand
the possible ways in which primes can split in $k$, $K_6$, and $L$.
The following gives a complete answer to this question:

\begin{theorem}\label{thm_splitting_types} Let $L$ be an $A_4$ or 
$S_4$-quartic field, and let $k$ be its cubic resolvent.

Suppose first that $p \geq 3$ is a prime number.
\begin{enumerate}[1.]
\item If $p$ is $(3)$ in $k$, then $p$ is $(3 1)$ in $L$.
\item If $p$ is $(21)$ in $k$, then $p$ is $(4)$, $(2 11)$, $(2^2)$, or 
$(1^2 1^2)$ in $L$.
\item If $p$ is $(111)$ in $k$, then $p$ is $(1111), (22), (2^2),$ or 
$(1^2 1^2)$ in $L$.
\item If $p$ is $(1^2 1)$ in $k$, then $p$ is $(2 1^2)$, $(1^2 11)$, or 
$(1^4)$ in $L$.
\item If $p$ is $(1^3)$ in $k$, then $p$ is $(1^3 1)$ in $L$.
\end{enumerate}
If $p = 2$, then in addition to the above decomposition types, in all cases 
$2$ can be $(1^4)$ in $L$, if $2$ is $(1^2 1)$ in $k$ then $2$ can also be
$(1^2 1^2)$ in $L$, and if $2$ is $(1^2 1)_4$ in $k$ then $2$ can also be
$(2^2)$ in $L$.

Moreover, we have the Artin relation 
\begin{equation}\label{eqn_artin}
\zeta_L(s) = \frac{ \zeta(s) \zeta_{K_6}(s)}{ \zeta_k(s) }
\end{equation}
among the Dedekind zeta functions associated to $L$, $K_6$, $k$, and $\Q$, allowing us to determine
the splitting types of a prime $p$ in any of $k$, $L$, and $K_6$ from the splitting 
in the remaining two fields.

\end{theorem}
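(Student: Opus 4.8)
The plan is to prove Theorem \ref{thm_splitting_types} by first establishing the Artin relation \eqref{eqn_artin}, which is the conceptual heart of the result, and then using it together with the field-theoretic structure from Theorem \ref{thm_quartic_corr} to deduce the splitting tables by a systematic case analysis. The Artin relation follows from the representation theory of the Galois group $G = \Gal(\widetilde{K}/\Q)$, which is $A_4$ or $S_4$. First I would recall that for any number field $M = \widetilde{K}^H$ fixed by a subgroup $H \leq G$, one has $\zeta_M(s) = \prod_\rho L(s,\rho)^{\langle \mathrm{Ind}_H^G \mathbf{1}, \rho\rangle}$, where the product is over irreducible characters $\rho$ of $G$ and the exponent is the multiplicity of $\rho$ in the permutation character $\mathrm{Ind}_H^G \mathbf{1}$. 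Thus \eqref{eqn_artin} reduces to the purely character-theoretic identity
\begin{equation*}
\mathrm{Ind}_{H_L}^G \mathbf{1} + \mathbf{1} = \mathrm{Ind}_{\{1\}}^G \mathbf{1}\big|_{?} + \mathrm{Ind}_{H_{K_6}}^G \mathbf{1} - \mathrm{Ind}_{H_k}^G \mathbf{1},
\end{equation*}
where $H_L$, $H_{K_6}$, $H_k$ are the stabilizers of $L$, $K_6$, $k$ respectively; more precisely I would verify the permutation-character identity $\mathrm{Ind}_{H_L}^G \mathbf 1 + \mathrm{Ind}_{H_k}^G \mathbf 1 = \mathbf 1 + \mathrm{Ind}_{H_{K_6}}^G \mathbf 1$ on the level of characters, which is an equality of virtual characters that can be checked on each conjugacy class of $G$.

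To carry this out concretely I would use the explicit subgroup structure recorded in the proof of Theorem \ref{thm_quartic_corr}: in the $S_4$ case $G = S_4$ with $H_L \cong S_3$ (index $4$, the stabilizer of a root of the quartic), $H_k \cong D_4$ (index $3$), and $H_{K_6}$ the index-$6$ subgroup with $\Gal(\widetilde K/K_6) \cong C_4$; in the $A_4$ case $G = A_4$ with $H_L \cong C_3$ (index $4$), $H_k$ trivial on the cubic so $k$ corresponds to the $V_4$ subgroup giving a cyclic cubic, and $K_6$ of degree $6$. The permutation characters $\mathrm{Ind}_{H}^G\mathbf 1$ have a well-known interpretation: their value on $g \in G$ is the number of cosets fixed by $g$, equivalently the number of fixed points of $g$ in the corresponding $G$-action. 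So the identity becomes a statement comparing fixed-point counts of each group element acting on the four roots (for $L$), on the relevant $6$-element set (for $K_6$), and on the three conjugate cubic resolvents (for $k$); this I would verify class-by-class using the standard cycle-type data for $S_4$ and $A_4$.

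With the Artin relation in hand, the splitting types are determined as follows. For a prime $p$, the splitting type in each of $k$, $K_6$, $L$ is governed by the cycle type of the Frobenius conjugacy class $\Frob_p \in G$ (for unramified $p$) acting on the appropriate coset space, and ramified primes are handled by passing to the decomposition and inertia groups. The factorization of $\zeta_L(s)\zeta_k(s) = \zeta(s)\zeta_{K_6}(s)$ into local Euler factors at $p$ then forces a compatibility among the three splitting types that determines any one from the other two, giving the final clause of the theorem; and running through the possible cycle types of $\Frob_p$ produces precisely the enumerated lists in items (1)--(5). The main obstacle I anticipate is not the unramified case, which is a clean Frobenius-cycle-type bookkeeping, but rather the ramified primes and especially $p=2$: here one must analyze inertia and the finer invariants (such as whether $\Disc(k)\equiv 0$ or $4 \pmod 8$, i.e. the $(1^21)_0$ versus $(1^21)_4$ distinction) to pin down which of the a priori possible types actually occur, and to justify the extra types listed for $p=2$. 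As the excerpt itself notes, this requires checking many special cases and overlaps with Martinet's work \cite{Mar}, so I would only sketch this ramified analysis, citing the local class field theory and the discriminant computations ($\Disc(K)=\Disc(k)\N(\mfd(K_6/k))$ from Theorem \ref{thm_quartic_corr}) to constrain the ramification, and defer the exhaustive verification to the supplementary note mentioned in the paper.
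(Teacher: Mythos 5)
Your character-theoretic proof of \eqref{eqn_artin} is fine, and is in fact more detailed than the paper, which simply cites the relation as a well-known consequence of the character theory of $A_4$ and $S_4$; likewise your Frobenius cycle-type bookkeeping correctly recovers the unramified possibilities (it replaces the paper's use of Stickelberger's parity theorem, and either works). The genuine gap is in the ramified analysis for $p \geq 3$, which you fold into a deferred sketch as if it were only an issue at $p = 2$. The lists in items (2)--(5) contain ramified types --- $(2^2)$ and $(1^21^2)$ when $p$ is $(21)$ or $(111)$ in $k$, and everything in items (4)--(5) --- and no Frobenius cycle type produces these, so your claim that running through cycle types ``produces precisely the enumerated lists'' fails. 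The tool you are missing is the square norm constraint coming from the parametrization: since $K_6/k$ has trivial norm, $\gd(K_6/k) = 4\mfa/\c^2$ with $\mfa$ integral, squarefree and of square norm, so for $p \neq 2$ the primes $\p \mid p$ of $k$ that ramify in $K_6/k$ must satisfy $\sum_{\p} f(\p/p) \equiv 0 \pmod 2$. This parity condition is what eliminates, for instance, $(1^4)$ in $L$ for $p \geq 3$ unramified in $k$ (by Proposition \ref{revised_cortot} all primes of $k$ above $p$ would then ramify in $K_6/k$, giving odd total residue degree $3$), and, combined with the divisibility-by-$3$ argument for $(1^31)$, it pins down the $p \geq 3$ lists; its failure at $p = 2$ (the factor $4/\c^2$ absorbs ramification above $2$) is precisely why the extra types $(1^4)$, $(1^21^2)$, $(2^2)$ can appear there. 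The invariant $\Disc(k) \bmod 8$ that you point to only discriminates among the $2$-adic subcases; it is not the mechanism that makes $p=2$ special.

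Two further points. First, even after applying these constraints the paper is left with nine stubborn combinations that require individual group-theoretic arguments (it works one example, using decomposition fields inside $\widetilde{L}$, and defers the rest to the supplementary note); your sketch has no counterpart to this step, and the note does not cover the square-norm and Stickelberger arguments, which are part of the proof proper. Second, the positive direction --- that every listed combination actually occurs, in particular the ``can be'' assertions at $p = 2$ --- is established in the paper by exhibiting explicit triples $(k, K_6, L)$ found by computer search; Chebotarev gives existence only for the unramified types, so this too needs to be supplied.
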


We remark that Theorem \ref{thm_splitting_types} overlaps substantially with unpublished work of Martinet \cite{Mar}.

\begin{proof} 
The equation \eqref{eqn_artin} is a well-known consequence of the character theory of $A_4$ and $S_4$,
and we omit the details here.

For the first part of the theorem, the basic idea of the proof is as follows: A prime $p$ may have splitting type $(3)$, $(21)$, $(111)$, 
$(1^2 1)$, or $(1^3)$ in $k$, and each of the primes above $p$ may be ramified, split, or inert in $K_6$;
by \eqref{eqn_artin}, this determines the splitting type of $p$ in $L$. 

By computer search, we found triples $(k, K_6, L)$ for each combination of splitting types listed in the theorem.
It therefore remains to prove that no other combinations are possible. Several tools useful for this include:

\begin{itemize} 

\item
We can use \eqref{eqn_artin} to rule out some
combinations, for example, $(21)$ in $k$ and $(411)$ in $K_6$.

\item A theorem of Stickelberger says that if $F$ is a number field 
of degree $n$ and $p$ is a prime unramified in $F$ which splits into $g$ prime 
ideals, then $\leg{\Disc(F)}{p}=(-1)^{n-g}$. Thus, since 
$\Disc(L)=\Disc(k) f(L)^2$, it follows that
when $p$ is unramified both in $k$ and $L$ the number of primes above $p$ in 
$k$ and $L$ must have opposite parity. This rules out, for example, the possibility that $p$
is
$(21)$ in $k$ and $(222)$ in $K_6$.
\item Using the square norm condition: recall that $\gd(K_6/k)=4\a/\c^2$
with $\a$ integral, squarefree, and of square norm. Thus if $\p$ is a prime 
ideal of $k$ not dividing $2$ which is ramified in $K_6/k$, we must have
$v_{\p}(\a)=1$, and a short computation shows that
$\sum_{\p\mid p\Z_k, \p\mid\gd(K_6/k)}f(\p/p)$ is even. This rules out, 
for example, the possibility that a prime $p \neq 2$ splits as $(21)$ in $k$ and $(221^2)$
in $K_6$. 

This condition does not apply to $p = 2$, and indeed more splitting types are possible for $p = 2$.

\item Using divisibility by $3$ of ramification degrees: suppose that a prime
$p$ splits as $(1^31)$ in $L/\Q$. If $\mfP$ is a prime of $\widetilde{L}$
above $p$, the ramification index $e(\mfP | p)$ must be divisible by $3$, implying that
$p$ is $(1^3)$ in $k$.
\end{itemize}
This leaves nine additional cases to rule out, which split further into subcases depending on whether $L$ is an $S_4$ or $A_4$-quartic field.
We accomplished this using a variety of
group-theoretic arguments. Although the proofs are entertaining, they would require a somewhat lengthy digression.
Accordingly we omit them here, but an unpublished note with complete proofs is available from the second author's website\footnote{
\url{http://www.math.sc.edu/~thornef}}. Here, we give
a single example illustrating the flavor of these proofs.
\begin{itemize}
\item
Suppose that $p$ is $(21)$ in $k$
$(221^2)$ in $K_6$, and $(21^2)$ in $L$, where $L$ is an $S_4$-quartic field.
Writing $[\widetilde{L} : \Q] = efg$ with the usual meaning, 
we have $2\mid e$ and $2\mid f$. 

If
$\GP$ is an ideal of $\WL$ above the ideal $\p$ of $L$ with 
$f(\p|p)=2$, we have $e(\GP|\p)\mid[\WL:L]=6$, and since $e(\p|p)=1$
it follows that $e=e(\GP|p)\mid 6$, so that $e=2$. Similarly, by considering 
the ideal $\p$ of $L$ with $e(\p|p)=2$ we see that $f=2$. Thus the 
decomposition fields are quartic fields, and since the only quartic subfields
of $\WL$ are the conjugates of $L$, it follows that $L$ is a 
decomposition field, a contradiction since none of the prime ideals $\p$
of $L$ above $p$ satisfy $e(\p|p)=f(\p|p)=1$.
\end{itemize}
\end{proof}

We will also need the following simple consequence of \eqref{eqn_artin} in the sequel.
\begin{proposition}\label{revised_cortot}
A prime $p$ is $(1^4)$ in $L$ if and only if all the prime 
ideals above $p$ in $k$ are ramified in the quadratic extension $K_6/k$.
\end{proposition}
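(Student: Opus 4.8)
The plan is to use the Artin relation \eqref{eqn_artin}, which reduces any statement about splitting types in $L$ to a computation with the zeta functions of $\Q$, $k$, and $K_6$. Recall that a prime $p$ is totally ramified, i.e.\ $(1^4)$, in $L$ precisely when $\zeta_L(s)$ contributes a single Euler factor $(1-p^{-s})^{-1}$ at $p$, and more generally the local factor of $\zeta_L$ encodes the multiset of pairs $(e_i, f_i)$ of the primes above $p$. Taking $p$-Euler factors in \eqref{eqn_artin}, we have
\begin{equation*}
\prod_{\mfp_L \mid p} \frac{1}{1 - p^{-f(\mfp_L/p) s}}
= \frac{1}{1-p^{-s}} \cdot \frac{\prod_{\mfp_{K_6} \mid p} (1 - p^{-f(\mfp_{K_6}/p)s})^{-1}}{\prod_{\mfp_k \mid p}(1-p^{-f(\mfp_k/p)s})^{-1}}\;.
\end{equation*}
The left-hand side has a zero of the appropriate order in $1-p^{-s}$ exactly when $p$ is $(1^4)$, so the whole statement becomes a matching of factors on the right.

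First I would argue the forward direction. Suppose every prime ideal of $k$ above $p$ ramifies in $K_6/k$. Since $K_6/k$ is quadratic, ramification at $\mfp_k$ means there is a single prime $\mfp_{K_6}$ above it with the same residue degree, so $\zeta_{K_6}$ and $\zeta_k$ have \emph{identical} $p$-Euler factors. Hence the ratio on the right collapses to $1$, and \eqref{eqn_artin} forces the $p$-factor of $\zeta_L$ to equal that of $\zeta(s)$, namely $(1-p^{-s})^{-1}$. This is only possible if $p$ has a single prime above it in $L$ of residue degree $1$, i.e.\ $p = \mfp_L$ with $f = 1$; since $[L:\Q]=4$ this means $e=4$, so $p$ is $(1^4)$ in $L$.

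For the converse, I would run the implication backwards: if $p$ is $(1^4)$ in $L$, the left-hand side is $(1-p^{-s})^{-1}$, so \eqref{eqn_artin} gives that the $p$-factors of $\zeta_{K_6}$ and $\zeta_k$ coincide. Matching residue-degree multisets, this equality holds if and only if each prime $\mfp_k \mid p$ has exactly one prime of $K_6$ above it with the same residue degree, which (for the quadratic extension $K_6/k$) means each $\mfp_k$ is ramified in $K_6/k$. The main point to be careful about is that a priori the $p$-factor of $\zeta_{K_6}$ could match that of $\zeta_k$ with some primes splitting and others being inert in a compensating pattern; but since $K_6/k$ is quadratic, above each $\mfp_k$ there is either a ramified prime (degree unchanged), two split primes (factor $(1-p^{-f s})^{-2}$), or one inert prime (factor $(1-p^{-2f s})^{-1}$), and only the ramified case leaves the local factor unchanged. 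Checking that no cancellation across distinct primes $\mfp_k$ can spuriously produce equality is the one place requiring a small argument, and it follows from the fact that the multiset of residue degrees of $\{\mfp_{K_6}\}$ must equal that of $\{\mfp_k\}$ together with $\sum_{\mfp_{K_6}} e f = 2 \sum_{\mfp_k} e f$ forcing total ramification primewise. This completes the equivalence.
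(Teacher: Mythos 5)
Your proof is correct and follows exactly the route the paper intends: the paper states this proposition as a ``simple consequence of \eqref{eqn_artin}'' without further detail, and your Euler-factor comparison at $p$ (ramification leaves the local factor of $\zeta_{K_6}$ equal to that of $\zeta_k$, and conversely the degree/multiset count on residue degrees rules out any compensating split/inert pattern) is precisely that argument made explicit.
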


\section{The arithmetic of quartic fields in $\LL_2(k)$}\label{sec_arith_quartic}
Recall that in Definition \ref{defll} we wrote
$$\LL_2(k)=\LL(k,1)\cup\LL(k,4)\cup\LL(k,16)\cup\LL_{tr}(k,64)\;.$$

The point of this definition is the following crucial result:

\begin{theorem}\label{thmcrux} We have $L\in\LL_2(k)$ if and only if the 
corresponding extension of trivial norm as explained above is of the form 
$K_6=k(\sqrt{\be})$, where $\be\in V^+(k)$ is coprime to $2$.
\end{theorem}

\begin{proof}
Assume first that $\be\in V^+(k)$. By Proposition
\ref{prop:hec} and Lemma \ref{lem_square} we have $\d(K_6/k)=4/\c^2$, so
$\N(\d(K_6/k))=\Disc(L)/\Disc(k)=64/\N(\c)^2$. Since $\be$ is totally positive,
when $k$ is totally real so are $K_6$ and the Galois closure of $L$, so
$L$ is totally real. If $\c\ne\Z_k$ we have $64/\N(\c)^2=1$, $4$, or $16$,
so $L\in\LL_2(k)$. Thus assume that $\c=\Z_k$, so that $\Disc(L)=64\Disc(k)$
and $\d(K_6/k)=4$. This implies that all the primes above $2$ in $k$ are
ramified in $K_6/k$, so by Proposition \ref{revised_cortot} the prime $2$ is totally 
ramified in $L$.

\smallskip

Conversely, let $L\in\LL_2(k)$ and let $K_6=k(\sqrt{\al})$ be the 
corresponding extension, where $\al$ is of square norm. Write 
$\al\Z_k=\a\q^2$, where $\a$ is unique if we choose it integral and squarefree,
and $\q$ can be chosen coprime to $2$. Note that if $k$
is totally real then so is $L$ and hence $K_6$, so $\al$ will automatically
be totally positive. Since $\al$ has square
norm, so does $\a$. Since $\d(K_6/k)=4\a/\c^2$ with $\c\mid2\Z_k$ coprime to
$\a$ and $\N(\d(K_6/k))=\Disc(L)/\Disc(k)=2^{2j}$ for $0\le j\le 3$, it 
follows that $\a$ is a product of distinct prime ideals above $2$, whose 
product of norms is a square. If $\a=\Z_k$ then $\be = \al$ is a virtual unit
coprime to $2$.
Thus, assume by contradiction that $\a\ne\Z_k$. 
Considering the five possible splitting types of $2$ in $k$ and using the
fact that $\N(\a)\le\N(\c^2)$, it is immediate to see that the only remaining
possibilities are as follows:

\begin{itemize}\item $2\Z_k=\p_1\p_2$ with $\p_2$ of degree $2$, 
$\a=\p_2$, $\c=\p_1$, $\d(K_6/k)=\p_2^3$, hence $L\in\LL(k,64)$.
Then $\p_1$ is not ramified in $K_6/k$ so by Proposition \ref{revised_cortot}
$2$ is not totally ramified in $L$, a contradiction.
\item $2\Z_k=\p_1\p_2\p_3$, $\a=\p_1\p_2$, $\c=\p_3$, $\d(K_6/k)=(\p_1\p_2)^3$,
hence $L\in\LL(k,64)$. Similarly, here $\p_3$ does not ramify in $K_6/k$, so 
$2$ is not totally ramified in $L$, again a contradiction and proving the
theorem.\end{itemize}
\end{proof}

In light of Lemma \ref{lem_square}, we immediately obtain the following:

\begin{corollary}\label{corcrux}
Suppose that $L$ is an $A_4$ or $S_4$-quartic field, with $K_6$ the 
corresponding quadratic extension of $k$. Then $L \in \calL_2(k)$
if and only if $K_6/k$ is unramified at infinity and $\mfd(K_6/k) | (4)$.
\end{corollary}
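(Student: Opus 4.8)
The plan is to derive the corollary by chaining Theorem \ref{thmcrux} with Lemma \ref{lem_square}, which together translate both sides of the asserted equivalence into a single statement about the existence of a totally positive virtual unit generator of $K_6/k$. First I would invoke the dictionary of Lemma \ref{lem_square}: the condition $\mfd(K_6/k)\mid(4)$ holds precisely when $K_6=k(\sqrt{\al})$ for some $\al\in V(k)$, and in that case $K_6/k$ is unramified at infinity exactly when the generator can be taken in $V^+(k)$. Hence the right-hand side of the corollary — that $K_6/k$ is unramified at infinity and $\mfd(K_6/k)\mid(4)$ — is equivalent to the assertion that $K_6=k(\sqrt{\al})$ for some $\al\in V^+(k)$.

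Next I would bring in Theorem \ref{thmcrux}, which states that $L\in\LL_2(k)$ if and only if $K_6=k(\sqrt{\be})$ for some $\be\in V^+(k)$ that is \emph{coprime to $2$}. The forward implication of the corollary is then immediate: if $L\in\LL_2(k)$, then $\be\in V^+(k)\subset V(k)$, so by Lemma \ref{lem_square} we get $\mfd(K_6/k)\mid(4)$ together with ramification-freeness at infinity. The reverse implication requires starting from a generator $\al\in V^+(k)$ produced by Lemma \ref{lem_square} and upgrading it to one that is additionally coprime to $2$.

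This coprimality adjustment is the one genuinely nontrivial step, and the point where the word ``immediately'' deserves scrutiny. The plan is to write $\al\Z_k=\q^2$ (possible since $\al\in V(k)$) and to choose, by weak approximation, an element $\ga\in k^*$ with $v_\p(\ga)=v_\p(\q)$ for every prime $\p\mid2$. Then $\be:=\al\ga^{-2}$ generates the same field, $k(\sqrt{\al})=k(\sqrt{\be})=K_6$, remains a virtual unit since its valuations stay even at every prime, is coprime to $2$ by construction, and is still totally positive because $\ga^{-2}$ is a square and hence positive in every real embedding. This is exactly the normalization already employed in Proposition \ref{prop:hec}.

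With $\be\in V^+(k)$ coprime to $2$ in hand, Theorem \ref{thmcrux} returns $L\in\LL_2(k)$, closing the equivalence. I expect the entire argument to be short; the only substantive content is the verification that clearing the $2$-part of $\al\Z_k$ by a square preserves both the generated field and total positivity, which is routine once one observes that multiplication by $\ga^{-2}$ changes neither.
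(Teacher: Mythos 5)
Your proof is correct and takes essentially the same route as the paper: the corollary is stated there as an immediate consequence of combining Theorem \ref{thmcrux} with Lemma \ref{lem_square}, which is exactly the chaining you perform. The coprimality adjustment you spell out via weak approximation is the same normalization the paper leaves implicit (it is already built into Proposition \ref{prop:hec} and the proof of Theorem \ref{thmcrux}), so it adds explicitness but not a different argument.
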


The analysis for $A_4$-quartic fields is greatly simplified by the following 
result.
\begin{proposition}\label{prop_53}
If $k$ is a cyclic cubic field then $\rk_2(k)$ is even and 
$\rk_2^+(k) = \rk_2(k)$.

In addition, if $a \in \Z$, the $2$-ranks of $\Cl_{a \Z_k}(k)$ and of
$\Cl^+_{a \Z_k}(k)$ are also even.
\end{proposition}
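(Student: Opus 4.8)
The plan is to exploit the action of $G=\Gal(k/\Q)=\langle\sigma\rangle\cong C_3$ on the relevant class groups together with the fact that $2$ is inert in the Eisenstein integers $\Z[\zeta_3]$. The central mechanism is this: suppose $M$ is a finite abelian group carrying a $G$-action and annihilated by the norm element $\nu=1+\sigma+\sigma^2$ modulo squares (i.e. $\nu M\subseteq 2M$). Over $\F_2$ we have $\sigma^3-1=(\sigma-1)(\sigma^2+\sigma+1)$, and $\sigma^2+\sigma+1=\nu$ acts as $0$ on $M/2M$; hence $M/2M$ is a module over $\F_2[\sigma]/(\sigma^2+\sigma+1)\cong\Z[\zeta_3]/2\Z[\zeta_3]$. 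Since $2\equiv2\pmod3$ is inert in $\Z[\zeta_3]$, this quotient is the field $\F_4$, so $M/2M$ is an $\F_4$-vector space and $\rk_2(M)=\dim_{\F_2}(M/2M)$ is even.

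To apply this I would check the hypothesis on $C_{\c^2}$ for $\c=(1)$ and $\c=(4)$ by a direct computation of $\nu$. If $\a$ is coprime to $\c$ with $\N(\a)=\ell^2$ a square, then $\nu[\a]=[\a\,\sigma\a\,\sigma^2\a]=[N_{k/\Q}(\a)\Z_k]=[\ell^2\Z_k]$, which is of the form $\q^2\be$ with $\q=\ell\Z_k$ (coprime to $\c$ since $\ell$ is odd) and $\be=1\equiv1\pmod{^*\c^2}$ of square norm; this is exactly a denominator element in Definition \ref{def_zkc}, so $\nu=0$ on $C_{\c^2}$. As the paper already notes that $C_{\c^2}$ has exponent dividing $2$, the principle above gives that $|C_{\c^2}|$ is a power of $4$. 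In particular, by the isomorphism $C_{(1)}\cong\Cl(k)/\Cl(k)^2$ of Proposition \ref{prop_c4}\eqref{it_iso}, $\rk_2(k)=\dim_{\F_2}C_{(1)}$ is even.

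For $\rk_2^+(k)=\rk_2(k)$ I would argue purely from cardinalities, which is the slickest route. By Proposition \ref{prop_c4}\eqref{it_iso} we also have $C_{(4)}\cong\Cl_{(4)}(k)/\Cl_{(4)}(k)^2$, and the same computation shows $|C_{(4)}|$ is a power of $4$. On the other hand Proposition \ref{prop_c4}\eqref{it_size} asserts $|C_{(4)}|\in\{|C_{(1)}|,\,2|C_{(1)}|\}$. Since a ratio of two powers of $4$ can never equal $2$, we must have $|C_{(4)}|=|C_{(1)}|$, and by the criterion of Proposition \ref{prop_c4}\eqref{it_size} this is equivalent to $\rk_2^+(k)=\rk_2(k)$ (a cyclic cubic field is automatically totally real, since complex places would come in pairs). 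This trick lets me bypass the substantive arithmetic fact lurking inside \eqref{it_size}, namely that a cyclic cubic field admits no nonsquare totally positive unit, which I would otherwise have to prove directly.

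Finally, for the ray groups $\Cl_{a\Z_k}(k)$ and $\Cl^+_{a\Z_k}(k)$ with $a\in\Z$, I would localize the $2$-part $A$ at $2$; since $3$ is invertible in $\Z_2$, the idempotents $e_0=\nu/3$ and $e_1=1-e_0$ split $A=e_0A\oplus e_1A$. On $e_1A$ the norm $\nu$ acts as $0$, so $e_1A$ is a $\Z_2[\zeta_3]$-module and has even $2$-rank exactly as above. The summand $e_0A$, on which $\sigma$ acts trivially, is identified through the norm and conorm maps (whose composite is multiplication by $[k:\Q]=3$, invertible on $2$-parts) with the $2$-part of the corresponding square-norm group over $\Q$; here the square-norm condition forces that group to be built from perfect squares and hence to contribute nothing to the $2$-rank. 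I expect this last step to be the main obstacle: making the norm/conorm identification precise in the ray-class setting and genuinely verifying that the $G$-ambiguous ``rational'' part $e_0A$ vanishes requires carefully unwinding the square-norm conditions in the definitions. By contrast the first three paragraphs are essentially formal once the single observation ``$\nu=0$ modulo squares and $2$ is inert in $\Z[\zeta_3]$'' is in hand.
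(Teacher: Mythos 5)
Your first three paragraphs are correct and they do prove the first assertion. The engine (the norm element $\nu=1+\sigma+\sigma^2$ annihilates the module, $2$ is inert in $\Z[\zeta_3]$, so the mod-$2$ quotient is an $\F_4$-vector space of even dimension) is exactly the paper's, which applies it directly to $\Cl(k)$ and $\Cl^+(k)$, where $\nu$ genuinely acts trivially because $\N(\a)\Z_k$ is principal with a positive rational (hence totally positive) generator. Where you genuinely diverge is the equality $\rk_2^+(k)=\rk_2(k)$: the paper gets it from the Armitage--Fr\"ohlich bound $\rk_2^+(k)\le\rk_2(k)+\lfloor r_1/2\rfloor=\rk_2(k)+1$ together with the evenness of both ranks, whereas you observe that $|C_{(1)}|$ and $|C_{(4)}|$ are powers of $4$, that part \eqref{it_size} of Proposition \ref{prop_c4} forbids the ratio $2$, and then read the equality off the criterion stated there. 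This is legitimate (Proposition \ref{prop_c4} is proved before, and independently of, Proposition \ref{prop_53}), and it buys exactly what you say: no Armitage--Fr\"ohlich, and no direct argument about totally positive units. One small caveat: your parenthetical ``$\q=\ell\Z_k$ is coprime to $\c$ since $\ell$ is odd'' is only valid because you use $\c=(1)$ (coprimality vacuous) and $\c=(2)$ ($\a$ coprime to $2$ forces $\ell$ odd); for intermediate $\c$ it can fail, e.g.\ $2$ totally split, $\c=\p_1$, $\a=\p_2\p_3$.

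The final paragraph has a fatal gap, and it cannot be repaired, because the second assertion of the proposition is false as stated. Your splitting $A=e_0A\oplus e_1A$ and the evenness of $\rk_2(e_1A)$ are fine, but the identification of $e_0A$ is wrong: the groups $\Cl_{a\Z_k}(k)$ and $\Cl^+_{a\Z_k}(k)$ carry no square-norm condition, and what the norm and conorm maps actually give is $e_0A=j(B)\cong B$, where $B$ is the $2$-Sylow subgroup of the ray class group of $\Q$ modulo $a$, i.e.\ of $(\Z/a\Z)^*$ in the narrow case and of $(\Z/a\Z)^*/\{\pm1\}$ in the wide case. Indeed $j\circ N=\nu$ on $\Cl_{a\Z_k}(k)$ while $N\circ j$ is multiplication by $3$ on $\Cl_a(\Q)$, so on $2$-parts $j$ is injective, $N$ is surjective onto $B$, and $e_0A=\tfrac13\nu A=j(\N(A))=j(B)$. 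Therefore
$$\rk_2\bigl(\Cl^+_{a\Z_k}(k)\bigr)\equiv\rk_2\bigl((\Z/a\Z)^*\bigr)\pmod 2\;,\qquad \rk_2\bigl(\Cl_{a\Z_k}(k)\bigr)\equiv\rk_2\bigl((\Z/a\Z)^*/\{\pm1\}\bigr)\pmod 2\;,$$
and these parities can be odd. Concretely, take $k=\Q(\zeta_7)^+$ (class number $1$, and the units realize all eight sign patterns) and $a=4$: by Lemma \ref{lem_square}, the quadratic extensions of $k$ whose conductor has finite part dividing $(4)$ are exactly the seven fields $k(\sqrt{u})$ with $u$ a nonsquare unit, so $\rk_2\bigl(\Cl^+_{(4)}(k)\bigr)=3$. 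Similarly $\rk_2\bigl(\Cl_{(8)}(k)\bigr)=1$, the only relevant extension being $k(\sqrt{2})$.

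You should know that the paper's own one-line proof of this part (``the $G$-invariance of $a\Z_k$ guarantees that $\Cl_{a\Z_k}(k)$ and $\Cl^+_{a\Z_k}(k)$ are also $\Z[\zeta_3]$-modules'') breaks at the very same point: $\nu[\a]=[\N(\a)\Z_k]$ is trivial in $\Cl(k)$ and $\Cl^+(k)$, but in a ray class group the class of a principal ideal generated by a rational integer is in general nontrivial; these classes sweep out precisely the subgroup $j(B)$ above, so $\nu$ does not act trivially and there is no $\Z[\zeta_3]$-structure. So your instinct that $e_0A$ is ``the main obstacle'' is exactly right; the obstacle is simply not removable. What survives, and is all the paper ever uses (namely $\Cl^+_{(8)}(k)$ in the later proposition on the five $A_4$-quartic fields), is the displayed congruence: since $\rk_2\bigl((\Z/8\Z)^*\bigr)=2$, the group $\Cl^+_{(8)}(k)$ does have even $2$-rank. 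A correct version of the second assertion should either state only this congruence, or restrict to those moduli $a$ for which the corresponding ray class group of $\Q$ has even $2$-rank.
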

\begin{proof}
There is a natural action of the group ring $\Z[G]$ on $\Cl(k)$ and on 
$\Cl^+(k)$, and in the cyclic cubic case $G=\langle\sigma\rangle$, where
$1+\sigma+\sigma^2$ acts trivially, so we have an action of
$\Z[\sigma]/(1+\sigma+\sigma^2)\isom\Z[\z_3]$. Since $2$ is inert in 
$\Z[\z_3]$ it follows that any $2$-torsion $\Z[\z_3]$-module has even rank,
so in particular $\rk_2(k)$ and $\rk^+_2(k)$ are even. Furthermore, by
a theorem of Armitage and Fr\"ohlich \cite{AF}, for any number field $K$ with
$r_1$ real embeddings we have 
\begin{equation}\label{eqn_af}
\rk^+_2(K)\le \rk_2(K)+\lfloor r_1/2\rfloor\;,
\end{equation}
so in our case $\rk^+_2(k)\le \rk_2(k)+1$, so that we have equality since
both ranks are even.

The final statement is true for the same reason: the $G$-invariance of
$a\Z_k$ guarantees that $\Cl_{a\Z_k}(k)$ and $\Cl^+_{a\Z_k}(k)$ are also
$\Z[\z_3]$-modules.
\end{proof}

These results, combined with our previous work (especially Proposition 
\ref{prop_c4}), imply the following counting formulas:

\begin{proposition}\label{prop_l2k}
The following statements are true:
\begin{enumerate}[(1.)]
\item\label{it_l2k_count} We have 
$|\calL(k, 1)| = (2^{\rk_2(k)} - 1)/a(k) = (|C_{(1)}| - 1)/a(k)$,
where $a(k)$ is as in Definition \ref{defphik}.
\item\label{it_l2k_count2}
We have $|\calL_2(k)| = |C_{(4)}| - 1$, so that for noncyclic $k$, $|\calL_2(k)|$ is equal to either $|\calL(k, 1)|$ or $2 |\calL(k, 1)| + 1$.
\item\label{it_l2k_empty}
We have $\calL(k, 4) = \calL(k, 16) = \calL_{tr}(k, 64) = \emptyset$ {\upshape (}equivalently, $|\calL_2(k)| = |\calL(k, 1)|$
{\upshape )}
if and only if $k$ is totally real and $\rk_2^+(k) = \rk_2(k)$, i.e., if and 
only $k$ is totally real and there does not exist a nonsquare totally positive
unit. In particular, this is true for cyclic fields.
\item\label{it_l2k_only_one}
If one of $\calL(k, 4)$, $\calL(k, 16)$, or $\calL_{tr}(k, 64)$ is nonempty 
then the other two are empty.
\end{enumerate}
\end{proposition}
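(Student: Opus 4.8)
For the final statement \eqref{it_l2k_only_one}, the plan is to reduce everything to a single fact about conductors: that every field in $\calL_2(k) \setminus \calL(k,1)$ yields the same value of $\N(\d(K_6/k))$, which by Theorem~\ref{thm_quartic_corr} forces all of them into just one of the three sets. To set this up, recall from Theorem~\ref{thmcrux} and Corollary~\ref{corcrux} that $L \in \calL_2(k)$ corresponds to $K_6 = k(\sqrt{\be})$ with $\be$ a totally positive virtual unit coprime to $2$, and that $L \in \calL(k,1)$ exactly when $\be$ is in addition a square modulo ${}^*4\Z_k$. For such $\be$ let $\c(\be) \mid 2\Z_k$ be the largest ideal for which $x^2 \equiv \be \pmod{{}^*\c(\be)^2}$ is soluble; then Proposition~\ref{prop:hec} gives $\d(K_6/k) = 4/\c(\be)^2$, so $\N(\d(K_6/k)) = 64/\N(\c(\be))^2$. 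Thus $\calL(k,4)$, $\calL(k,16)$, and $\calL(k,64)$ collect those $L$ with $\N(\c(\be)) = 4,\,2,\,1$ respectively, and when $\c(\be) = \Z_k$ all primes above $2$ ramify in $K_6/k$, so by Proposition~\ref{revised_cortot} the prime $2$ is totally ramified and $L \in \calL_{tr}(k,64)$. It therefore suffices to show that $\N(\c(\be))$ is constant over all $\be$ with $\c(\be) \neq 2\Z_k$.

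First I would record the key observation that $\c(\be)$ is unchanged if $\be$ is multiplied by a virtual unit $\ga$ that is a square modulo ${}^*4\Z_k$: since $\c^2 \mid 4\Z_k$ for every $\c \mid 2\Z_k$, such a $\ga$ is a square modulo ${}^*\c^2$ for all $\c \mid 2\Z_k$, whence $\be\ga$ is a square modulo ${}^*\c^2$ if and only if $\be$ is, and the largest admissible ideal does not move. Writing $S^+(k)$ for the group of square classes of totally positive virtual units and $S^+_{(4)}(k) = \{\ov{\be} \in S^+(k) : x^2 \equiv \be \pmod{{}^*4\Z_k}\text{ soluble}\}$ for its subgroup, this says that $\N(\c(\be))$ depends only on the image of $\ov{\be}$ in $S^+(k)/S^+_{(4)}(k)$. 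Since $\calL_2(k) \setminus \calL(k,1)$ corresponds, up to the $a(k)$-to-one identification of Theorem~\ref{thm_quartic_corr}, to $S^+(k) \setminus S^+_{(4)}(k)$, the claim will follow as soon as I show that this difference is a single coset, i.e.\ that $[S^+(k):S^+_{(4)}(k)] \le 2$.

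The main work, and the step I expect to be the genuine obstacle, is bounding this index while correctly tracking the totally-positive condition rather than passing to the full group $S[N]$. My plan is to specialize the exact sequence of Proposition~\ref{prop:exseq} to $\c = (2)$: the reduction map $S^+(k) \to Z_{(2)}[N]/Z_{(2)}^2$ has kernel exactly $S^+_{(4)}(k)$, and, being the restriction of the analogous map on $S[N]$, has image contained in $\Ker\!\big(Z_{(2)}[N]/Z_{(2)}^2 \to C_{(4)}\big)$. Lemmas~\ref{easyzk1} and~\ref{propm1} give $|Z_{(2)}[N]/Z_{(2)}^2| = 8/z_k((2)) = 4$, so this kernel has order $4/|C_{(4)}/C_{(1)}|$. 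Whenever $\calL_2(k) \setminus \calL(k,1) \neq \emptyset$ we have $|C_{(4)}| > |C_{(1)}|$, hence $|C_{(4)}| = 2|C_{(1)}|$ by Proposition~\ref{prop_c4}\eqref{it_size}, and the kernel has order $2$; this forces $[S^+(k):S^+_{(4)}(k)] \le 2$. In the remaining case $|C_{(4)}| = |C_{(1)}|$, which by part~\eqref{it_l2k_empty} includes every cyclic $k$, all three sets are already empty and nothing needs proof. In either case every $L \in \calL_2(k) \setminus \calL(k,1)$ carries a single value of $\N(\c)$, so at most one of $\calL(k,4)$, $\calL(k,16)$, $\calL_{tr}(k,64)$ can be nonempty.
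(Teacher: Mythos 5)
Your proposal addresses only the last assertion \eqref{it_l2k_only_one} of the proposition, and this is the principal problem. The statement also comprises the counting formulas \eqref{it_l2k_count} and \eqref{it_l2k_count2} and the emptiness criterion \eqref{it_l2k_empty}, none of which you prove; worse, your argument for \eqref{it_l2k_only_one} is not independent of them. In the case $|C_{(4)}| = |C_{(1)}|$ you appeal to part \eqref{it_l2k_empty} to say that all three sets are empty, and your assertion that $\calL_2(k)\setminus\calL(k,1)\neq\emptyset$ forces $|C_{(4)}|>|C_{(1)}|$ is precisely the contrapositive of that unproved part (your exact sequence only gives $[S^+(k):S^+_{(4)}(k)]\le 4$ in the equal case, which is not enough). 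To close this you need what the paper does for \eqref{it_l2k_count}--\eqref{it_l2k_empty}: Theorem \ref{thm_quartic_corr}, Corollary \ref{corcrux} and class field theory identify $\calL(k,1)$ and $\calL_2(k)$ with index-two subgroups of $\Cl(k)$ and $\Cl_{(4)}(k)$ respectively, and Proposition \ref{prop_c4} converts this into the statements about $C_{(1)}$ and $C_{(4)}$; alternatively the Hecke-based counts $|S^+(k)| = |\Cl_{(4)}(k)/\Cl_{(4)}(k)^2|$ and $|S^+_{(4)}(k)| = |\Cl(k)/\Cl(k)^2|$ would give $[S^+(k):S^+_{(4)}(k)] = |C_{(4)}|/|C_{(1)}|$ exactly and handle both cases of your argument uniformly. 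As a proof of the full proposition, then, there is a genuine gap.

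That said, the argument you give for \eqref{it_l2k_only_one} itself is correct, and it takes a different route from the paper's. You work on the Kummer--Selmer side: Hecke's conductor $\c(\be)$ from Proposition \ref{prop:hec} is constant on cosets of $S^+_{(4)}(k)$ in $S^+(k)$, and restricting the exact sequence of Proposition \ref{prop:exseq} at $\c=(2)$, with $|Z_{(2)}[N]/Z_{(2)}^2|=4$ from Lemmas \ref{easyzk1} and \ref{propm1} and $|C_{(4)}|=2|C_{(1)}|$ from Proposition \ref{prop_c4} \eqref{it_size}, yields $[S^+(k):S^+_{(4)}(k)]\le 2$, so the unique nontrivial coset carries a single value of $\N(\d(K_6/k))$. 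The paper instead works on the ray class group side: when $|C_{(4)}|\neq|C_{(1)}|$ one has $|\Cl_{(4)}(k)[2]| = 2|\Cl(k)[2]|$, one chooses a minimal $\mfm\mid(4)$ at which this doubling occurs, and every quadratic extension counted by $\Cl_{(4)}(k)[2]$ has relative discriminant $\Z_k$ or exactly $\mfm$, so all fields of $\calL_2(k)$ have discriminant $\Disc(k)$ or $\Disc(k)\N(\mfm)$. The two mechanisms are dual under class field theory, and both ultimately rest on the fact that $|C_{(4)}|$ is at most $2|C_{(1)}|$; yours has the merit of keeping the totally-positive bookkeeping explicit and avoiding any discussion of conductors of ray class characters, while the paper's is shorter once the class-field dictionary is in place and pins down the common discriminant concretely as $\N(\mfm)$.
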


\begin{proof}

\eqref{it_l2k_count}. By Theorem \ref{thm_quartic_corr}, the elements of
$\calL(k, 1)$ are in $a(k)$-to-$1$ correspondence with the 
quadratic extensions of $k$ which are unramified everywhere (including at the
infinite places). By class field theory, they correspond to subgroups
of $\Cl(k)$ of index $2$, yielding the first equality. The second equality is
a consequence of \eqref{it_iso} of Proposition \ref{prop_c4}.

\eqref{it_l2k_count2}. The elements of $\calL_2(k)$ may have the ramification 
described in Corollary \ref{corcrux}, and the equality again follows by class 
field theory and Proposition \ref{prop_c4}.

\eqref{it_l2k_empty}. This follows from \eqref{it_size} of Proposition 
\ref{prop_c4}: the latter criterion is equivalent to the equality 
$|C_{(4)}| = |C_{(1)}|$. The statement for cyclic fields follows from 
Proposition \ref{prop_53}.

\eqref{it_l2k_only_one}
If $|C_{(4)}| \neq |C_{(1)}|$, then $|\Cl_4(k)[2]| = 2|\Cl(k)[2]|$. Among the
ideals dividing $(4)$, choose a minimal ideal $\mfm$ with the property that
$|\Cl_{\mfm}(k)[2]| = 2|\Cl(k)[2]|$. Then $\mfm$ must be a square by Lemma 
\ref{lem_square}, and all extensions $K_6/k$ counted by $\Cl_{(4)}(k)[2]$ must
have $\mfd(K_6/k) = \Z_k$ or $\mfd(K_6/k) = \mfm$. Therefore,
all of the fields in $\calL_2(k)$ have discriminant equal to either $\Disc(k)$
or $\Disc(k) \cdot \NO(\mfm)$, with $\NO(\mfm)$ equal to $4$, $16$, or $64$.
\end{proof}

In Section \ref{sec_arith_s4} we prove further results about the arithmetic of
$S_4$-quartic fields in particular. We conclude this section with the 
following result, which is an analogue for $A_4$-quartic fields of a related result in \cite{CT3}.
As with that proposition the result is not required 
elsewhere in this paper; rather, it is an application of Theorem
\ref{thm_main} (2).

\begin{proposition} Let $k$ be a cyclic cubic field such that $\rk_2(k) = 4$,
so that there exist five $A_4$-quartic fields $L$ with cubic resolvent $k$, 
which all satisfy $\Disc(L) = \Disc(k)$. If $2$ is totally split in $k$, then 
$2$ is totally split in exactly one of the five $A_4$-quartic fields $L$,
and splits as $2 \Z_L = \mfp_1 \mfp_2$ with $\mfp_i$ of degree $2$ in the four
others.
\end{proposition}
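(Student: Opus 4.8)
I would reinterpret ``$2$ totally split in $L$'' through class field theory and reduce the entire statement to a one-line incidence count on $\mathbb{P}^1(\F_4)$. First, the setup: since $k$ is cyclic cubic with $\rk_2(k)=4$, Proposition~\ref{prop_53} gives $\rk_2^+(k)=\rk_2(k)$, so by Proposition~\ref{prop_l2k} all five fields lie in $\LL(k,1)$ and satisfy $\Disc(L)=\Disc(k)$. By Theorem~\ref{thm_quartic_corr} and Proposition~\ref{prop_c4}(1), these five fields are in $a(k)=3$-to-$1$ correspondence with the $15$ nontrivial characters of $V:=\Cl(k)/\Cl(k)^2\isom C_{(1)}$, the three conjugate extensions $K_6$ of a given $L$ forming one orbit under $G=\Gal(k/\Q)=\langle\sigma\rangle$. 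As in the proof of Proposition~\ref{prop_53}, $2$ is inert in $\Z[\z_3]$, so $V$ is a $2$-dimensional $\F_4$-vector space on which $\sigma$ acts as a primitive cube root of unity $\om\in\F_4^*$. Hence the five $G$-orbits of nonzero characters are exactly the five $\F_4$-lines in the dual space $V^*$, i.e.\ the five points of $\mathbb{P}^1(\F_4)$, each line $\ell_j$ corresponding to one field $L_j$ via $\Gal(\widetilde{L_j}/k)\isom V_4$.

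\textbf{The splitting condition.} Writing $2\Z_k=\p_1\p_2\p_3$, with $\sigma$ permuting the $\p_i$ cyclically, I would note that $\Disc(L)=\Disc(k)$ forces $2$ to be unramified in $L$, so by Theorem~\ref{thm_splitting_types} its type is $(1111)$ or $(22)$; and by the Artin relation~\eqref{eqn_artin}, $2$ is $(1111)$ in $L$ if and only if every $\p_i$ splits in $K_6$, i.e.\ $\chi(\p_i)=1$ for all $i$, where $\chi$ cuts out $K_6$. Setting $w:=[\p_1]\in V$, the Galois action gives $[\p_2]=\om w$ and $[\p_3]=\om^2 w$, so $\{[\p_1],[\p_2],[\p_3]\}$ is the set of nonzero points of the line $\F_4 w$ whenever $w\neq 0$. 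Thus, for the line $\ell_j$ attached to $L_j$, the condition ``$2$ is $(1111)$ in $L_j$'' is equivalent to $\ell_j\subseteq(\F_4 w)^{\perp}$. Since the annihilator of a line in a $2$-dimensional $\F_4$-space is again a line, a nonzero $w$ is annihilated by exactly one of the five lines, namely $\ell=(\F_4 w)^{\perp}$, while the other four force some $\chi(\p_i)=-1$ and hence type $(22)$. This gives the statement, \emph{provided $w\neq 0$}.

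\textbf{The main obstacle.} Everything therefore reduces to showing $w=[\p_1]\neq 0$ in $\Cl(k)/\Cl(k)^2$, equivalently that $2$ is \emph{not} totally split in the genus field $K_2$ (the maximal unramified elementary abelian $2$-extension of $k$), equivalently that some unramified quadratic $K_6/k$ has a prime above $2$ inert. I expect this to be the hard part, and I emphasize that it is genuinely finer than anything controlled by the groups $C_{\c^2}$ and the invariants $z_k(\c)$ used earlier: distinguishing a split $\p_i$ from an inert one amounts to knowing the square class of the defining virtual unit $\be$ modulo $\p_i^3$ (i.e.\ modulo $8$), whereas $\mfd(K_6/k)$ only sees it modulo $4$. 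The route I would take is a reciprocity computation at $2$, but I would warn that the naive product formula $\prod_v(\be,2)_v=1$ is vacuous here: because $(\be)$ is a square ideal, all tame symbols at odd primes and the archimedean symbols vanish, and one recovers only the already-known relation $\prod_i\chi(\p_i)=1$. One must instead pair $\be$ against an auxiliary element supported at a single $\p_i$ and control the resulting symbols, or argue directly on $\Gal(K_2/\Q)\isom\F_4^2\rtimes C_3$ that $\Frob_2$ cannot be the identity; making this rigorous, and thereby ruling out the degenerate case $w=0$, is where I anticipate the real work lies, and it should parallel the companion analysis for cubic fields in \cite{CT3}. Granting $w\neq 0$, the remainder is the bookkeeping above.
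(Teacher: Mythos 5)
Your reduction of the proposition to the single claim $w=[\p_1]\neq 0$ in $\Cl(k)/\Cl(k)^2$ is correct and is a genuinely different packaging of the first half of the argument: the $\F_4$-structure on $\Cl(k)/\Cl(k)^2$, the identification of the five fields with the five lines of $\mathbb{P}^1(\F_4)$ in the character group, and the incidence count showing that a nonzero $w$ is annihilated by exactly one line (giving type $(1111)$ for that field and $(22)$ for the other four) are all sound, and they replace the paper's Dirichlet-series computation insofar as narrowing the possibilities to ``exactly one'' versus ``all five'' is concerned. But the proposal stops precisely at the statement that carries all of the arithmetic content, and you say so yourself. Ruling out $w=0$ is not a degenerate technicality that can be deferred: if $w=0$, your own incidence argument yields the opposite conclusion (all five fields have $2$ totally split), so nothing is proved until that case is eliminated. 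Neither of your suggested routes is viable as sketched: a purely group-theoretic argument on $\Gal(K_2/\Q)\isom\F_2^4\rtimes C_3$ cannot work, because $K_2/\Q$ is unramified at $2$ and nothing in the group structure or Chebotarev forbids the Frobenius at a \emph{specific} prime from being trivial; and, as you note, the product formula degenerates. Genuine arithmetic input at $2$ is required.

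The paper supplies exactly that input, and it is what your sketch is missing. Writing $a$ for the number of the five fields in which $2$ is totally split, the paper uses the explicit Euler factors of Theorem \ref{thm_main} (2) to compute the $1^{-s},2^{-s},4^{-s},8^{-s},16^{-s}$ coefficients of $\Phi_k(s)$ (namely $\tfrac{16}{3},0,16,8a-8,8a-8$), and deduces that the number of quadratic extensions $K_6/k$ ramified only at $2$ and infinity is $15+48a$; together with the trivial extension these are counted by $\Cl^+_{(8)}(k)/\Cl^+_{(8)}(k)^2$, whose $2$-rank is even by Proposition \ref{prop_53}, so $16+48a=2^r$ with $r$ even, leaving only $a=1$ ($r=6$) and $a=5$ ($r=8$). (This step is what your linear algebra replaces.) The case $a=5$ --- your $w=0$ --- is then killed by a rank bound: since $\rk_2^+(k)=\rk_2(k)$, all eight sign signatures are realized by units, so the ray class group exact sequence gives $\rk_2(\Cl^+_{(8)}(k))\le\rk_2(\Cl(k))+\rk_2\bigl((\Z_k/8\Z_k)^{\times}\bigr)=4+3=7$, whence $r\neq 8$. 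To complete your proof you must import this bound (or an equivalent computation pinning down the $2$-rank of a ray class group of conductor dividing $(8)\cdot\infty$); without it, the proposal is an elegant reformulation of the dichotomy $a\in\{1,5\}$, not a proof that $a=1$.
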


\begin{proof}
Writing $D = \Disc(k)$, we use Theorem \ref{thm_main} (2) to count the number 
of cubic fields of discriminant $D$, $16D$, $64D$, and $256D$. By hypothesis 
(and Proposition \ref{prop_53}), $\rk_2(\Cl(k)) = \rk_2(\Cl^+(k)) = 4$.

Write $5 = a + b$, where of the five quartic fields $L$,
$2$ is totally split in $a$ of them, and $2$ is $(22)$ in $b$ of them.
By Theorem \ref{thm_splitting_types} these are no other possibilities, and we prove that
$a = 1$.

The $1^{-s}, 2^{-s}, 4^{-s}, 8^{-s},$ and $16^{-s}$ coefficients of $\Phi_k(s)$ are equal to
$\frac{16}{3}, 0, 16, 8a - 8, 8a - 8$ respectively. Therefore, the number of quadratic extensions
$K_6 / k$, ramified only at infinity and/or 2, 
is equal to $3 \big(- \frac{1}{3} + \frac{16}{3} + 16 + 2 \cdot(8a - 8) \big) = 15 + 48a$.
These extensions, together with the trivial extension $k/k$, are counted by the ray class group
$\Cl^+_{(8)}(k) / \Cl^+_{(8)}(k)^2$. By Proposition \ref{prop_53}, this has even $2$-rank. Hence,
$16 + 48a = 2^r$ where $r \geq 6$ is even. (We know that $r \neq 4$ because the $4^{-s}$ coefficient is nonzero.)

In addition to $a = 1$ and $r = 6$, this equation also has one other solution $a = 5$ and $r = 8$, and we conclude the proof
by ruling this out. Consider the ray class group exact sequence \cite[Proposition 3.2.3]{Coh}
\begin{equation}
1 \rightarrow U^+_{(n)}(k) \rightarrow U(k) \rightarrow (\Z_k / n\Z_k)^{\times} \times \mathbb{F}_2^3 \rightarrow
\Cl^+_{(n)}(k) \rightarrow \Cl(k) \rightarrow 1\;,
\end{equation}
for $n = 1$ and $n = 8$. (Here $U^+_{(n)}(k)$ is the group of totally positive units congruent to $1 \ (\textmod \ {}^{*}n)$.)
As $\Cl^+(k)/\Cl^+(k)^2 \simeq \Cl(k)/\Cl(k)^2$, we know that the image of $U(k)$ surjects onto $\mathbb{F}_2^3$, i.e., that
all eight sign signatures are represented by units of $k$. Since $(\Z_k/8\Z_k)^{\times}$ has $2$-rank $3$ (recall that $2$ is totally split), 
this implies that
$\rk_2 ( \Cl^+_{(8)}(k) ) \leq \rk_2 ( \Cl(k) ) + 3 = 4 + 3 = 7$. Therefore $r \neq 8$ and the proof is complete.
\end{proof}
\begin{remark}
It ought to be possible to prove similar statements for related situations (e.g. if $\rk_2(k) = 6$), but we have not pursued this.
\end{remark}

\section{The arithmetic of $S_4$-quartic fields in $\LL_2(k)$}\label{sec_arith_s4}

In this section we further study the set $\LL_2(k)$ in the (more complicated)
$S_4$ case. If $k$ is an $S_3$-cubic field, then by Proposition \ref{prop_l2k},
at most one of $\calL(k, 4)$, $\calL(k, 16)$, and $\calL_{tr}(k, 64)$ can be
nonempty. We will prove the following:

\begin{proposition}\label{prop_split_table}
Let $k$ be an $S_3$-cubic field and let $L \in \LL_2(k)$. Then the following
table gives a complete list of all possibilities for the following data:
\begin{itemize}
\item The splitting type of $2$ in $k$, $K_6$, and $L$, and if it is $(1^2 1)$
in $k$ we also include as an index $\Disc(k) \ (\textmod \ 8)$;
\item The quantity $n^2:=\Disc(L)/\Disc(k)$, which must be equal to
$1$, $4$, $16$, or $64$.
\end{itemize}
For each possible combination we give a cubic polynomial generating $k$ over 
$\Q$, the characteristic polynomial $P_{\al}(x)$ of $\al$ of square norm such 
that $K_6=k(\sqrt{\al})$, and a defining polynomial for $L$ over~$\Q$.
\end{proposition}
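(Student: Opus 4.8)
The plan is to derive the completeness of the table from the structural results of Sections~\ref{sec_xnew}--\ref{sec_arith_quartic} and then to certify that no row is vacuous by exhibiting explicit fields. First I would reduce everything to the local behaviour at $2$. By Theorem~\ref{thmcrux} and Corollary~\ref{corcrux}, a field $L\in\LL_2(k)$ corresponds to a quadratic extension $K_6=k(\sqrt{\be})$ with $\be\in V^+(k)$ coprime to $2$ and with $\mfd(K_6/k)\mid(4)$, unramified at infinity. Since $\be$ is a virtual unit coprime to $2$, Proposition~\ref{prop:hec} gives $\mfd(K_6/k)=4/\c^2$ for some ideal $\c\mid2\Z_k$, whence
\[
n^2=\N(\mfd(K_6/k))=64/\N(\c)^2\in\{1,4,16,64\},
\]
a prime $\p\mid2$ of $k$ being ramified in $K_6/k$ exactly when $v_{\p}(\c)<v_{\p}(2)$. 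Thus listing the admissible $(k,K_6,L)$-data amounts to determining, for each splitting type of $2$ in $k$, the possible ideals $\c$ and the ramification they induce in $K_6/k$, after which the splitting type of $2$ in $L$ is forced by the Artin relation~\eqref{eqn_artin}.

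For the completeness claim I would separate the unramified and the wildly ramified resolvents. If $2$ is unramified in $k$ (types $(3)$, $(21)$, $(111)$), then $v_{\p}(2)=1$ for each $\p\mid2$, so a ramified prime contributes $v_{\p}(\mfd)=2$ and $\mfd$ is simply the square of the product of the primes ramifying in $K_6/k$; here $n^2$ is fixed by the ramification set alone. Running over the $L$-types permitted by Theorem~\ref{thm_splitting_types} (at $p=2$ one must also allow $(1^4)$, and for $(1^21)$ resolvents the extra types noted there), the relation~\eqref{eqn_artin} determines the splitting of $2$ in $K_6$ and hence the ramification set; conversely a putative $\mfd$ forcing a forbidden $L$-type is discarded, for instance $\mfd=\p_1^2$ over a $(21)$ resolvent (with $\p_1$ of degree $1$), which via~\eqref{eqn_artin} could only yield $L$ of type $(21^2)$, excluded by Theorem~\ref{thm_splitting_types}. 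If $2$ is wildly ramified in $k$ (types $(1^21)$ and $(1^3)$), the exponent $v_{\p}(\mfd)$ at a ramified prime is no longer pinned to $2$, and one must compute the admissible discriminant exponents by a local $2$-adic analysis of the same kind as in the proof of Theorem~\ref{prop_compute_zkc}. This is exactly where the value of $\Disc(k)\pmod 8$ intervenes, separating the rows $(1^21)_0$ and $(1^21)_4$, and where one finds that a single $L$-type---such as $(2^2)$ over a $(1^21)_4$ resolvent---can occur with two distinct values of $n^2$.

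It then remains to show that each surviving combination is actually realized, and this is the role of the explicit table. For a given candidate row I would produce a cubic field $k$ together with an element $\al$ of square norm, recorded through its characteristic polynomial $P_{\al}(x)$, and then invoke Proposition~\ref{propresal}(2) to write down a defining polynomial for the associated quartic $L$. From this quartic and its discriminant one reads off the factorization of $2$ in $k$, $K_6$, and $L$ and the value of $n^2$ directly, confirming that the row is attained; suitable pairs $(k,\al)$ are located by a short computer search. Combined with the case analysis above, and with Proposition~\ref{prop_l2k} (which fixes the attainable values of $n^2$ and their per-field uniqueness), this shows the table is both exhaustive and free of vacuous rows.

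The principal obstacle is the wildly ramified analysis for the $(1^21)$ and $(1^3)$ resolvents: one must determine precisely which discriminant exponents at the prime above $2$ are realized by a totally positive virtual unit of square norm. This is what rules out, say, $n^2=16$ for a $(1^3)$ resolvent, what distinguishes $(1^21)_0$ from $(1^21)_4$, and what allows one $L$-type to appear with two values of $n^2$. These computations are of the same flavour as the $2$-adic norm arguments behind Theorem~\ref{prop_compute_zkc}; finding clean defining polynomials witnessing every remaining row is routine but is the other time-consuming ingredient.
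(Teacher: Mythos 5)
Your skeleton (Theorem \ref{thmcrux} plus Hecke giving $\mfd(K_6/k)=4/\c^2$, then Theorem \ref{thm_splitting_types} and the Artin relation \eqref{eqn_artin} to tie the splittings together, with computer search and Proposition \ref{propresal}(2) certifying existence) matches the paper's, but there is a genuine gap in your completeness argument for the cases where $2$ is unramified in $k$. You claim that for the types $(3)$, $(21)$, $(111)$ the only exclusions needed are those where a putative $\mfd$ forces an $L$-type forbidden by Theorem \ref{thm_splitting_types}, and you locate all remaining difficulty in the wild types $(1^21)$ and $(1^3)$. This fails for the type $(111)$: the ramification set consisting of \emph{all three} primes above $2$, i.e.\ $\mfd(K_6/k)=(2)^2$, gives $K_6$-type $(1^21^21^2)$, $L$-type $(1^4)$, and $n^2=64$, which is perfectly consistent with \eqref{eqn_artin} and is explicitly allowed by Theorem \ref{thm_splitting_types} (which permits $(1^4)$ at $p=2$ for every $k$-type) --- yet it is one of the two excluded ("---") rows of the table. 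Its exclusion is not combinatorial: one needs the square-norm constraint on $\al$, as in Proposition \ref{propdist}(3). Since all residue fields at $2$ are $\F_2$, one may write $\al=1+2\be$ with $\al$ coprime to $2$; if $\be$ were also coprime to $2$ then $\al=3+4\ga$ and $\N(\al)\equiv3\pmod4$, contradicting that $\al$ has square norm; hence $v_{\p_i}(\al-1)\ge2$ for some $i$, so $\p_i\mid\c$ and $\p_i$ is unramified in $K_6/k$, which kills the all-ramified configuration. So the arithmetic of square norms enters already in the totally split case, not only in the wild ones, and your proposal never supplies it there.

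Your appeal to Proposition \ref{prop_l2k} cannot close this hole: its part \eqref{it_l2k_only_one} says that for a \emph{fixed} $k$ at most one of $\LL(k,4)$, $\LL(k,16)$, $\LL_{tr}(k,64)$ is nonempty, but it does not say \emph{which} one; to exclude the $(111)/(1^4)/64$ row for \emph{every} totally split $k$ you need the statement that $\LL(k,16)$ is the nonempty one whenever any is, which is exactly Proposition \ref{propdist}(3) --- and likewise the other parts of Proposition \ref{propdist} are what pin down the wild rows in the paper. Relatedly, your wild-case plan ("determine which discriminant exponents are realized by a totally positive virtual unit of square norm") points in the right direction but understates what is needed: two of the paper's exclusions, namely $k=(1^21)_4$ with $L=(1^4)$ and $n^2=16$, and $k=(1^21)_4$ with $L=(1^21^2)$ and $n^2=4$, involve exponent configurations that are \emph{not} obstructed by any valuation count, and are ruled out instead by discriminant congruences ($\Disc(L)$ modulo $2^8$, resp.\ modulo $64$) compared against the possible local quadratic and quartic algebras from the Jones--Roberts database. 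Without these two local arguments and without Proposition \ref{propdist}, your sieve leaves extra rows in the table.
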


\bigskip

\centerline{
\begin{tabular}{|c|c|c|c||c|c|c|}
\hline
$k$-split & $K_6$-split & $L$-split & $n^2$ & $k$ & $P_{\al}(x)$ & $L$ \\
\hline\hline
$(3)$& $(33)$ & $(31)$ & $1$ & $x^3 - x^2 - 14x + 23$ & $x^3 - 35x^2 + 179x - 81$ & $x^4 - x^3 - 4x^2 + x + 2$ \\
\hline
$(3)$ & $(3^2)$ & $(1^4)$ & $64$ & $x^3 - x^2 - 4x + 3$ & $x^3 - 15x^2 + 55x - 49$ & $x^4 - 2x^3 - 6x^2 + 2$ \\
\hline
$(21)$& $(42)$ & $(4)$ & $1$ & $x^3 - x^2 - 9x + 10$ & $x^3 - 8x^2 + 12x - 1$ & $x^4 - 4x^2 - x + 1$ \\
\hline
$(21)$& $(2211)$ & $(211)$ & $1$ & $x^3 - 20x - 17$ & $x^3 - 12x^2 + 28x - 1$ & $x^4 - 6x^2 - x + 2$ \\
\hline
$(21)$& $(2^22)$ & $(2^2)$ & $16$ & $x^3 - x^2 - 14x - 4$ & $x^3 - 22x^2 + 21x - 4$ & $x^4 - 11x^2 - 2x + 25$ \\
\hline
$(21)$& $(2^211)$ & $(1^2 1^2)$ & $16$ & $x^3 - x^2 - 7x + 6$ & $x^3 - 13x^2 + 36x - 16$ & $x^4 - 2x^3 - 5x^2 + 2x + 2$ \\
\hline
$(21)$& $(2^21^2)$ & $(1^4)$ & $64$ & $x^3 - 4x - 1$ & $x^3 - 11x^2 + 19x - 1$ & $x^4 - 2x^3 - 4x^2 + 4x + 2$ \\
\hline
$(111)$& $(2211)$ & $(22)$ & $1$ & $x^3 - x^2 - 34x - 16$ & $x^3 - 67x^2 + 947x - 625$ & $x^4 - x^3 - 8x^2 + x + 3$ \\
\hline
$(111)$& $(21^21^2)$ & $(2^2)$ & $16$ & $x^3 - 13x- 4$ & $x^3 - 14x^2 + 45x - 4$ & $x^4 - 7x^2 - 2x + 1$ \\
\hline
$(111)$& $(111111)$ & $(1111)$ & $1$ & $x^3 + x^2 - 148x + 480$ & $x^3 - 37x^2 + 308x - 576$ & $x^4 - 2x^3 - 17x^2 - 6x + 16$ \\
\hline
$(111)$& $(1^21^211)$ & $(1^2 1^2)$ & $16$ & $x^3 - 17x - 8$ & $x^3 - 26x^2 + 65x - 36$ & $x^4 - 13x^2 - 6x + 26$ \\
\hline
$(111)$& $(1^21^21^2)$ & $(1^4)$ & --- & --- & --- & --- \\
\hline
\end{tabular}}
\smallskip
\centerline{
\begin{tabular}{|c|c|c|c||c|c|c|}
\hline
$(1^21)_0$& $(2^211)$ & $(21^2)$ & $1$ & $x^3 - x^2 - 18x - 14$ & $x^3 - 43x^2 + 323x - 25$ & $x^4 - x^3 - 5x^2 + 2x + 2$ \\
\hline
$(1^21)_0$& $(1^21^211)$ & $(1^211)$ & $1$ & $x^3 - x^2 - 58x + 186$ & $x^3 - 75x^2 + 499x - 169$ & $x^4 - x^3 - 9x^2 + 3x + 14$ \\
\hline
$(1^21)_0$& $(1^411)$ & $(1^2 1^2)$ & $4$ & $x^3 - 22x - 8$ & $x^3 - 14x^2 + 25x - 4$ & $x^4 - 7x^2 - 2x + 6$ \\
\hline
$(1^21)_0$& $(1^41^2)$ & $(1^4)$ & $64$ & $x^3 - x^2 - 6x + 2$ & $x^3 - 16x^2 + 60x - 16$ & $x^4 - 8x^2 - 4x + 1$ \\
\hline
$(1^21)_4$& $(2^211)$ & $(21^2)$ & $1$ & $x^3 - x^2 - 20x - 22$ & $x^3 - 43x^2 + 291x - 121$ & $x^4 - x^3 - 5x^2 + 4x + 2$ \\
\hline
$(1^21)_4$& $(1^21^211)$ & $(1^211)$ & $1$ & $x^3 - 59x - 168$ & $x^3 - 91x^2 + 915x - 1849$ & $x^4 - x^3 - 11x^2 + 11x + 16$ \\
\hline
$(1^21)_4$& $(1^42)$ & $(2^2)$ & $4$ & $x^3 - 11x - 12$ & $x^3 - 10x^2 + 21x - 4$ & $x^4 - 5x^2 - 2x + 1$ \\
\hline
$(1^21)_4$& $(1^42)$ & $(2^2)$ & $16$ & $x^3 - x^2 - 8x + 10$ & $x^3 - 13x^2 + 32x - 16$ & $x^4 - 2x^3 - 5x^2 + 2x + 3$ \\
\hline
$(1^21)_4$& $(1^411)$ & $(1^2 1^2)$ & $16$ & $x^3 -22x - 20$ & $x^3 - 17x^2 + 64x - 16$ & $x^4 - 2x^3 - 7x^2 + 4x + 2$ \\
\hline
$(1^21)_4$& $(1^41^2)$ & $(1^4)$ & --- & --- & ---  & --- \\
\hline
$(1^3)$& $(1^31^3)$ & $(1^3 1)$ & $1$ & $x^3 - x^2 - 27x - 43$ & $x^3 - 43x^2 + 179x - 9$ & $x^4 - x^3 - 5x^2 + 3x + 4$ \\
\hline
$(1^3)$& $(1^6)$ & $(1^4)$ & $4$ & $x^3 - x^2 - 9x + 11$ & $x^3 - 12x^2 + 16x - 4$ & $x^4 - 6x^2 - 2x + 5$ \\
\hline
$(1^3)$& $(1^6)$ & $(1^4)$ & $64$ & $x^3 - x^2 - 7x - 3$ & $x^3 - 20x^2 + 104x - 144$ & $x^4 - 10x^2 - 12x - 1$ \\
\hline
\end{tabular}}

\bigskip

\begin{remarks}\begin{itemize}
\item The empty rows in the table correspond to combinations not 
ruled out for $p = 2$ in the table in Theorem \ref{thm_splitting_types}, 
but which we will prove cannot occur for $L \in \LL_2(k)$.
\item In each case we have chosen the noncyclic totally real cubic field $k$ 
with smallest discriminant satisfying the given splitting conditions for $k$, 
$L$, and value of $n^2=\Disc(L)/\Disc(k)$, and in addition such that
$|\LL_2(k)|\le1$. One could also in most cases also choose complex cubic fields
if desired.
\item We could simplify this table by giving in addition to the splittings,
only the $P_{\al}(x)$ column, since $P_{\al}$ also generates the field $k$,
the field $K_6$ is given by the polynomial $P_{\al}(x^2)$, and the field
$L$ by Proposition \ref{propresal}. We have preferred to keep it as above.
\item As with Theorem \ref{thm_splitting_types}, our results in this section partially overlap with unpublished work
of Martinet \cite{Mar}.
\end{itemize}
\end{remarks}
 
In particular, the above proposition and table implies that for the $S_4$ case,
the table of Theorem \ref{thm_main} covers precisely the cases that occur. For
the $A_4$ case this is easy to check using Theorem
\ref{thm_splitting_types} alone.

Most of Proposition \ref{prop_split_table} is contained within the following:

\begin{proposition}\label{propdist} Let $k$ be a cubic field such that
either $k$ is complex, or $k$ is totally real and $\rk^+_2(k)>\rk_2(k)$, so 
that by Proposition \ref{prop_l2k}, one and exactly one of $\LL(k,4)$, 
$\LL(k,16)$, or $\LL_{tr}(k,64)$ is nonempty. Denote temporarily by
$W^+(k)$ the group of $u\in V^+(k)$ coprime to $2$, and by
$W_{(4)}^+(k)$ the group of $u\in W^+(k)$ such that $x^2\equiv u\pmod{^*(4)}$
is soluble (equivalently, $\ov{u}\in S^+_{(4)}(k)$).
\begin{enumerate}

\item\label{it_dist_in} If $2$ is $(3)$ in $k$, then 
$\LL_{tr}(k,64)\ne\emptyset$.
\item\label{it_dist_ps} If $2$ is $(21)$ in $k$, write $2\Z_k=\p_1\p_2$ with
$\p_i$ of degree $i$. Then if $v_{\p_1}(\al-1)=1$ for some $\al\in W^+(k)$
we have $\LL(k,16)=\emptyset$ and $\LL_{tr}(k,64)\ne\emptyset$, while if
$v_{\p_1}(\al-1)\ge2$ for some $\al\in W^+(k)\setminus W_{(4)}^+(k)$ the
reverse is true.
\item\label{it_dist_ts} If $2$ is $(111)$ in $k$ then $\LL(k,16)\ne\emptyset$.
\item\label{it_dist_pr0} If $2$ is $(1^21)_0$ in $k$, write 
$2\Z_k=\p_1^2\p_2$. Then if $v_{\p_1}(\al-1)=1$ for some $\al\in W^+(k)$ we 
have $\LL(k,4)=\emptyset$ and $\LL_{tr}(k,64)\ne\emptyset$, while if
$v_{\p_1}(\al-1)\ge2$ for some $\al\in W^+(k)\setminus W_{(4)}^+(k)$ the
reverse is true.
\item\label{it_dist_pr4} If $2$ is $(1^21)_4$ in $k$, write 
$2\Z_k=\p_1^2\p_2$. Then if $v_{\p_1}(\al-1)=1$ for some $\al\in W^+(k)$ we
have $\LL(k,4)=\emptyset$ and $\LL(k,16)\ne\emptyset$, while if
$v_{\p_1}(\al-1)\ge2$ for some $\al\in W^+(k)\setminus W_{(4)}^+(k)$ the 
reverse is true.
\item\label{it_dist_tr} If $2$ is $(1^3)$ in $k$, write $2\Z_k=\p_1^3$. Then 
if $v_{\p_1}(\al-1)=1$ for some $\al\in W^+(k)$ we have $\LL(k,4)=\emptyset$ 
and $\LL_{tr}(k,64)\ne\emptyset$, while if $v_{\p_1}(\al-1)\ge2$ for some 
$\al\in W^+(k)\setminus W_{(4)}^+(k)$ the reverse is true.
\end{enumerate}
\end{proposition}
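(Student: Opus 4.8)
The plan is to reduce the entire statement to a computation of the conductor $\c$ of the ``extra'' quadratic extension. By Theorem \ref{thmcrux} every $L\in\LL_2(k)$ arises from $K_6=k(\sqrt\be)$ with $\be\in W^+(k)$ non-square, and by Proposition \ref{prop:hec} together with Lemma \ref{lem_square} we have $\d(K_6/k)=4/\c^2$, so that $n^2=\N(\d(K_6/k))=64/\N(\c)^2$. Under the standing hypothesis, Proposition \ref{prop_c4}\eqref{it_size} gives $|C_{(4)}|=2|C_{(1)}|$; since $\LL(k,1)$ consists of the everywhere-unramified extensions (those with $\be\in W_{(4)}^+(k)$, i.e. $\c=(2)$) while $\LL_2(k)$ ranges over all of $W^+(k)$, this forces $[W^+(k):W_{(4)}^+(k)]=2$. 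Hence the non-unramified extensions all come from the single nontrivial coset $\be\,W_{(4)}^+(k)$ and, by the proof of Proposition \ref{prop_l2k}\eqref{it_l2k_only_one}, share one common discriminant $\mfm=4/\c^2$. It therefore suffices to compute $\N(\c)$ for one representative $\be$ of this coset, together with a check via Proposition \ref{revised_cortot} that $2$ is totally ramified in $L$ whenever $\N(\c)=1$ (so that the relevant set is $\LL_{tr}(k,64)$).

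First I would settle the dichotomy in cases \eqref{it_dist_ps}, \eqref{it_dist_pr0}, \eqref{it_dist_pr4}, and \eqref{it_dist_tr}. Because a unit square is $\equiv1\pmod{\p_1^2}$ and solubility of $x^2\equiv u\pmod{^*(4)}$ forces $u$ to be a square modulo $\p_1^2$, the reduction map $W^+(k)\to(\Z_k/\p_1^2)^*\isom\Z/2\Z$ kills $W_{(4)}^+(k)$ and so factors through $W^+(k)/W_{(4)}^+(k)\isom\Z/2\Z$. It is thus either trivial or an isomorphism, and these two alternatives are precisely the hypotheses ``$v_{\p_1}(\al-1)=1$ for some $\al\in W^+(k)$'' and ``$v_{\p_1}(\al-1)\ge2$ for some $\al\in W^+(k)\setminus W_{(4)}^+(k)$''. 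In particular the two conditions are mutually exclusive and exhaustive, and the distinguished unit $\be$ satisfies $v_{\p_1}(\be-1)=1$ in the first case and $v_{\p_1}(\be-1)\ge2$ in the second.

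Next comes the local computation of the exponents $c_\p:=v_\p(\c)$, each being the largest $c\le v_\p(2)$ for which $x^2\equiv\be\pmod{^*\p^{2c}}$ is soluble in $k_\p$. For a degree-one prime $\p$ a direct $2$-adic check gives $c_\p=1$ iff $v_\p(\be-1)\ge2$ and $c_\p=0$ iff $v_\p(\be-1)=1$. For the inert primes the essential input is the square-norm condition $\N(\be)\equiv1\pmod8$: factoring $\N(\be)=\prod_{\p\mid2}\N_{k_\p/\Q_2}(\be)$ and reducing modulo $4$ yields a parity constraint, since the local norm of a Teichm\"uller unit, and of any unit square, is $\equiv1\pmod4$. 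This settles the remaining unconditional and inert cases, namely \eqref{it_dist_in}, where only $\c=\Z_k$ survives once $\be\notin W_{(4)}^+(k)$; \eqref{it_dist_ts}, where the parity constraint forces exactly two of the three degree-one primes to satisfy $v_{\p_i}(\be-1)=1$, giving $\N(\c)=2$; and \eqref{it_dist_ps}, where the same norm computation shows the degree-two prime must ramify in $K_6/k$, so that $\N(\c)$ is governed by the degree-one prime $\p_1$ alone.

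The hard part will be the ramified primes in cases \eqref{it_dist_pr0}, \eqref{it_dist_pr4}, and \eqref{it_dist_tr}, where the exponent $c_{\p_1}$ at the ramified prime must be pinned down exactly and, in the second alternative of each, the intermediate value $c_{\p_1}=1$ must be ruled out. When $v_{\p_1}(\be-1)=1$ the class of $\be$ is a non-square modulo $\p_1^2$, so $c_{\p_1}=0$ at once; when $v_{\p_1}(\be-1)\ge2$ I expect $c_{\p_1}$ to take the larger admissible value, the intermediate possibility being excluded by the trace-and-codifferent computation of Lemma \ref{codiff}, which forces the offending element to have non-square norm, contradicting $\be\in W^+(k)$. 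The split between the $(1^21)_0$ and $(1^21)_4$ rows should then come from the value of $\N_{k_{\p_1}/\Q_2}(\be)\bmod4$ at the ramified quadratic completion --- exactly the norm-residue calculation over the six ramified quadratic fields already performed in the proof of Theorem \ref{prop_compute_zkc}, whose answer depends on whether $\Disc(k)\equiv0$ or $4\pmod8$ --- which, fed back through the square-norm identity, fixes the exponent $c_{\p_2}$ at the remaining degree-one prime. Assembling these per splitting type, and invoking Proposition \ref{prop_l2k} to guarantee that exactly one of the three sets is nonempty, yields the table.
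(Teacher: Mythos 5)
Your reduction and the unramified cases are fine: passing to a single representative $\be$ of the nontrivial coset of $W^+_{(4)}(k)$ in $W^+(k)$ is legitimate under the standing hypothesis, the dichotomy via the homomorphism $W^+(k)\to(\Z_k/\p_1^2)^*\isom\Z/2\Z$ is a clean substitute for the exclusivity argument of Remarks \ref{rem_sec7}, and your treatment of cases (1)--(3) by purely local norm computations is correct. It is also genuinely different from the paper, which instead excludes the unwanted conductors by combining the splitting table of Theorem \ref{thm_splitting_types} with the discriminant bounds of Lemma \ref{lem_propdist}; your route avoids both.

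The genuine gap is in the ramified cases (4)--(6), which you only ``expect'' rather than prove, and where the expectation you state is false in two of the three cases. When $2\Z_k=\p_1^2\p_2$ (cases (4) and (5)), in the second alternative the truth is $c_{\p_1}=1$ and $c_{\p_2}=1$, i.e.\ $\c=\p_1\p_2$ and $\d(K_6/k)=\p_1^2$ (this is recorded in the footnote to the proof of Proposition \ref{prop_list_c}). So the ``intermediate value $c_{\p_1}=1$'' that you propose to eliminate with Lemma \ref{codiff} is exactly what occurs, and no codifferent argument can eliminate it. In fact your own norm mechanism contradicts your claim: integers of a wildly ramified quadratic extension of $\Q_2$ have even trace, so any unit which is a square modulo $\p_1^2$ has local norm $\equiv1\pmod 4$; hence $c_{\p_1}\ge1$ together with $\N(\be)\equiv1\pmod 8$ forces the component of $\be$ at $\p_2$ to be $\equiv1\pmod 4$, i.e.\ $c_{\p_2}=1$, after which $\be\notin W^+_{(4)}(k)$ leaves only $\c=\p_1\p_2$, so $c_{\p_1}=2$ never happens. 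The real work in cases (4)/(5) thus concerns $c_{\p_2}$, not $c_{\p_1}$; and for $(1^21)_0$ it requires the finer fact that the local norm modulo $4$ at $\p_1$ depends only on $\be$ modulo $\p_1^2$ (so that $v_{\p_1}(\be-1)=1$ forces norm $\equiv3\pmod 4$), which is strictly more than the existence statements about norms from the six ramified quadratic fields that you quote from Theorem \ref{prop_compute_zkc}.

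Case (6) is the one place where your expectation ($c_{\p_1}=2$) is correct, but the sketch is still incomplete: a Lemma \ref{codiff}-type trace argument only shows that a square-norm unit $\be$ coprime to $2$ cannot satisfy $v_{\p_1}(\be-1)=3$. To rule out $c_{\p_1}=1$ one must first replace $\be$ by $\be\ga^2$ with $v_{\p_1}(\be\ga^2-1)\ge3$ (the paper constructs $\ga=1+\pi u$ explicitly), and only then does the trace argument upgrade this to $v_{\p_1}(\be\ga^2-1)\ge4$, hence solubility modulo $\p_1^4$ and $c_{\p_1}\ge2$. Without that adjustment step, $v_{\p_1}(\be-1)=2$ is consistent with everything you have said, and the conclusion $\calL(k,4)\ne\emptyset$ does not follow.
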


\begin{remarks}\label{rem_sec7} \hfill \begin{itemize}
\item The conditions in each of the cases \eqref{it_dist_ps}, \eqref{it_dist_pr0}, \eqref{it_dist_pr4}, \eqref{it_dist_tr}
are mutually exclusive, and where we write ``while ... the
reverse is true'', it is indeed for \emph{some} 
$\al\in W^+(k)\setminus W_{(4)}^+(k)$ (it is also true if one replaces 
``some'' by ``all'' but the result would be weaker). In particular, if $\al$ 
and $\be$ in $W^+(k)$ are such that $v_{\p_1}(\al-1)=1$ and 
$v_{\p_1}(\be-1)\ge2$, we have necessarily $\be\in W_{(4)}^+(k)$, since 
otherwise this would contradict the fact that only one of 
$\LL(k,4)$, $\LL(k,16)$, and $\LL_{tr}(k,64)$ is nonempty. 

\item
We can further rephrase the conditions on $\al$ as follows. Let $(\al_i)$ be 
an $\mathbb{F}_2$-basis for $W^+(k)/{k^*}^2=V^+(k)/{k^*}^2=S^+(k)$, 
chosen so that each $\al_i$ is coprime to 2. Then, for any prime $\mfp$ over 
$2$, $v_{\mfp}(\al - 1) = 1$ for some $\al\in W^+(k)$ if and only if it is 
true for one of the $\al_i$.

To prove the nontrivial direction of this we use the equality
\begin{equation}
(\al - 1)(\al' - 1) = (\al \al' - 1) - (\al + \al' - 2)\;,
\end{equation}
so that whenever 
$v_{\mfp}(\al - 1) \geq 2$ and $v_{\mfp}(\al' - 1) \geq 2$, then $v_{\mfp}(\al + \al' - 2) \geq 2$
and so $v_{\mfp}(\al \al' - 1) \geq 2$.
\end{itemize}
\end{remarks}

Before beginning the proof of Proposition \ref{propdist}, we collect some 
useful facts about discriminants.

\begin{proposition}\label{prophas} Let $k$ be a cubic field, write 
$\Disc(k)=Df^2$ with $D$ a fundamental discriminant, and let $p$ be a prime.
\begin{enumerate}\item If $p\ne3$ we have $v_p(f)\le1$ and $p\nmid\gcd(D,f)$.
\item We have $v_3(f)\le2$, and if $v_3(f)=1$ then $3\mid D$.
\item $p$ is totally ramified in $k$ if and only if $p\mid f$. In addition, if $p\ne 3$ then $\big(\frac{D}{p}\big)=\big(\frac{-3}{p}\big)$.

\item $p$ is partially ramified in $k$ if and only if $p\mid D$ and 
$p\nmid f$.\end{enumerate}\end{proposition}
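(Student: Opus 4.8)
The plan is to work locally, prime by prime, exploiting the identity $v_p(\Disc(k)) = v_p(D) + 2v_p(f)$ together with the fact that $D$ is a fundamental discriminant, so that $v_p(D) \in \{0,1\}$ for odd $p$ while $v_2(D) \in \{0,2,3\}$. The only external input is the list of possible discriminant exponents of a cubic field, read off from the completion $k \otimes \Q_p$ via the conductor--discriminant formula: for $p \neq 2,3$ one has $v_p(\Disc(k)) \in \{0,1,2\}$ according as $p$ is unramified, $(1^21)$, or $(1^3)$; at $p = 3$ one has $v_3(\Disc(k)) \in \{0,1\}$ in the unramified and $(1^21)$ cases and $v_3(\Disc(k)) \in \{3,4,5\}$ in the $(1^3)$ case, so that the value $2$ never occurs (a degree-$3$ ramified prime at $3$ is necessarily wild, contributing at least $3$, whereas a tame ramified prime contributes only $e-1 = 1$); and at $p = 2$ one has $v_2(\Disc(k)) = 0$ (unramified), $2$ (type $(1^21)_4$ or $(1^3)$), or $3$ (type $(1^21)_0$).

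For $p \geq 5$ everything is immediate from parity: since $v_p(D) \equiv v_p(\Disc(k)) \pmod 2$ and $v_p(D) \le 1$, the value of $v_p(\Disc(k)) \in \{0,1,2\}$ determines $v_p(D)$ and hence $v_p(f) \le 1$, with $v_p(f) = 1$ forcing $v_p(\Disc(k)) = 2$ and $v_p(D) = 0$. Reading off the three cases gives the first sentence of (3), statement (4), and the $\gcd$ and bound statements of (1). The same bookkeeping settles $p = 3$: the bound $v_3(\Disc(k)) \le 5$ gives $v_3(f) \le 2$, and if $v_3(f) = 1$ then $v_3(\Disc(k)) = v_3(D) + 2$ lies in $\{0,1,3,4,5\}$, which forces $v_3(D) = 1$ (the value $2$ being excluded), i.e.\ $3 \mid D$; this is (2).

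The main obstacle is the prime $p = 2$ when $v_2(\Disc(k)) = 2$, where the pair $(v_2(D), v_2(f))$ could a priori be $(0,1)$ or $(2,0)$; these correspond respectively to the types $(1^3)$ and $(1^21)_4$, and distinguishing them requires identifying the local quadratic resolvent $\Q_p(\sqrt D) = \Q_p(\sqrt{\Disc(k)})$ by computing $\Disc(k \otimes \Q_p)$ modulo $(\Q_p^*)^2$. This same computation simultaneously yields the assertion $\leg{D}{p} = \leg{-3}{p}$ in the totally ramified case: when $p \neq 3$ is totally ramified the completion is a tamely totally ramified cubic $\Q_p(\sqrt[3]{pu})$ with $u$ a unit, whose discriminant is $-27(pu)^2 \equiv -3 \pmod{(\Q_p^*)^2}$, so that $\Q_p(\sqrt D) = \Q_p(\sqrt{-3})$. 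As $\Q_p(\sqrt{-3})$ is unramified for every $p \neq 3$, this gives $p \nmid D$ (hence $(v_2(D), v_2(f)) = (0,1)$ in the ambiguous case) and, since $\Q_p(\sqrt{-3})$ splits precisely when $-3$ is a square in $\Q_p$, the equality $\leg{D}{p} = \leg{-3}{p}$. For the remaining ambiguous type $(1^21)_4$ at $p = 2$, the completion is $\Q_2 \times k_{\p_1}$ with $k_{\p_1}/\Q_2$ a ramified quadratic field of discriminant exponent $2$, so $\Disc(k) \equiv d \pmod{(\Q_2^*)^2}$ with $d \equiv 3 \pmod 4$ and $\Q_2(\sqrt D)$ ramified, giving $(v_2(D), v_2(f)) = (2,0)$. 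Tracking the discriminants of these wildly ramified quadratic completions at $2$ is the only genuinely delicate point, but it is routine once the completions are written explicitly.
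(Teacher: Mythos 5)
Your proof is correct, but it takes a genuinely different route from the paper: the paper does not prove the proposition at all, instead citing it as classical and referring to Section 10.1.5 of Cohen's book \cite{Coh} (with the remark that (2) follows easily from the same source). The treatment there rests on the classical Hasse correspondence, in which a cubic field $k$ with $\Disc(k)=Df^2$ is attached to a cyclic cubic extension of conductor $f$ of (the ring class fields of) $\Q(\sqrt{D})$, and the constraints on $f$ and $D$ are read off from conductor theory and class field theory. You instead argue purely locally: you enumerate the possible exponents $v_p(\Disc(k))$ of cubic \'etale algebras over $\Q_p$ (tame contributions $e-1$, the wild bounds $3\le d\le 5$ at $p=3$, and the exponents $0,2,3$ at $p=2$), combine them with the parity identity $v_p(\Disc(k))=v_p(D)+2v_p(f)$ and the constraint $v_p(D)\le 1$ (resp.\ $v_2(D)\in\{0,2,3\}$), and resolve the single genuinely ambiguous case $v_2(\Disc(k))=2$ by computing the square class of the local discriminant: for tame totally ramified $p\ne 3$ the completion is $\Q_p(\sqrt[3]{pu})$ with discriminant $\equiv -3 \pmod{(\Q_p^*)^2}$, which simultaneously shows $p\nmid D$ and gives the identity $\leg{D}{p}=\leg{-3}{p}$, while the $(1^21)_4$ completion $\Q_2\times k_{\p_1}$ forces $D\equiv 3\pmod 4$ up to squares, hence $4\mid D$ and $2\nmid f$. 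The exclusion of $v_3(\Disc(k))=2$ is exactly what yields statement (2), matching the paper's remark that (2) requires a separate (if easy) deduction. What each approach buys: yours is self-contained, elementary, and makes explicit the one delicate point at $p=2$ that the bare citation hides; the paper's citation keeps the exposition short and sits naturally alongside its other uses of the same classical machinery (e.g.\ the conductor-divisibility argument in the proof of Proposition \ref{prop_disc_mod}), at the cost of relying on an external reference.
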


\begin{proof} This is classical, and we refer to \cite{Coh} Section 10.1.5 for
a proof, except for (2) which can be easily deduced from loc.~cit. Note that
the last statement of (2) is true but empty for cyclic cubic fields since
we have $v_3(f)=0$ or $2$.\end{proof}

\begin{proposition}\label{prop_disc_mod} If $k$ is a cubic field then $2$ is
$(1^3)$ in $k$ if and only if $\Disc(k)\equiv20\pmod{32}$, and $(1^21)$ if
and only if $\Disc(k)\equiv8$ or $12\pmod{16}$. In particular we have
$v_2(\Disc(k))\le3$, and we cannot have $\Disc(k)\equiv4\pmod{32}$.
\end{proposition}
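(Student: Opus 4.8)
The plan is to reduce everything to the factorization $\Disc(k) = Df^2$ with $D$ a fundamental discriminant and then to read off congruences from Proposition \ref{prophas}. First I would recall the shape of fundamental discriminants: either $D$ is odd with $D \equiv 1 \pmod 4$, or $v_2(D) = 2$ and $D \equiv 12 \pmod{16}$, or $v_2(D) = 3$ and $D \equiv 8 \pmod{16}$; in particular $v_2(D) \le 3$. Next I would use Proposition \ref{prophas}(1),(3),(4) to translate the splitting of $2$ into divisibility conditions: $2$ is unramified precisely when $2 \nmid Df$, so that $\Disc(k)$ is odd; $2$ is $(1^3)$ precisely when $2 \mid f$, in which case $v_2(f) = 1$ and $2 \nmid D$, hence $D \equiv 1 \pmod 4$; and $2$ is $(1^2 1)$ precisely when $2 \mid D$ and $2 \nmid f$. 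Since $v_2(f) \le 1$ always holds, in every case $v_2(\Disc(k)) = v_2(D) + 2v_2(f) \le 3$, which already gives the bound in the ``in particular'' clause.

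For the totally ramified case, which is the crux, I would pin down $D$ modulo $8$. Writing $f = 2f'$ with $f'$ odd gives $\Disc(k) = 4Df'^2$, and since $f'^2 \equiv 1 \pmod 8$ we get $\Disc(k)/4 \equiv D \pmod 8$; the issue is that $D \equiv 1 \pmod 4$ only forces $D \equiv 1$ or $5 \pmod 8$. The extra input is the quadratic-symbol identity in Proposition \ref{prophas}(3): since $2$ is totally ramified and $2 \ne 3$, we have $\leg{D}{2} = \leg{-3}{2} = -1$ as Kronecker symbols, which forces $D \equiv \pm 3 \pmod 8$. Combined with $D \equiv 1 \pmod 4$ this yields $D \equiv 5 \pmod 8$, whence $\Disc(k) \equiv 20 \pmod{32}$. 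This appeal to Proposition \ref{prophas}(3) to upgrade $D \equiv 1 \pmod 4$ to $D \equiv 5 \pmod 8$ is the only genuinely delicate step; everything else is bookkeeping modulo $16$ and $32$.

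For the partially ramified case I would just compute $\Disc(k) = Df^2 \pmod{16}$ using $f^2 \equiv 1 \pmod 8$: if $v_2(D) = 3$ then $\Disc(k) \equiv 8 \pmod{16}$, while if $v_2(D) = 2$ then $D \equiv 12 \pmod{16}$ and $\Disc(k) \equiv 12 \pmod{16}$. This establishes all three forward implications. To finish, I would observe that the three target residue sets modulo $32$ — the odd classes, $\{20\}$, and $\{8,12,24,28\}$ — are pairwise disjoint, and that the five splitting types $(3),(21),(111),(1^2 1),(1^3)$ of $2$ exhaust all cubic fields; hence each forward implication automatically supplies its converse, giving both ``if and only if'' statements. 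Finally, $\Disc(k) \equiv 4 \pmod{32}$ lies in none of these sets (it is even, not $\equiv 20 \pmod{32}$, and not $\equiv 8$ or $12 \pmod{16}$), so it cannot occur, completing the ``in particular'' clause.
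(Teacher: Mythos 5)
Your proposal is correct, and its skeleton is the same as the paper's: reduce to $\Disc(k)=Df^2$ via Proposition \ref{prophas}, compute each ramification case modulo $32$ (resp.\ $16$), and recover both converses from the disjointness of the resulting residue classes together with the trichotomy unramified/partially ramified/totally ramified (the paper leaves this last step implicit; you make it explicit, which is fine). The one genuine divergence is the crucial step in the totally ramified case, namely upgrading $D\equiv1\pmod4$ to $D\equiv5\pmod8$. You do this by applying the identity $\leg{D}{p}=\leg{-3}{p}$ of Proposition \ref{prophas}(3) at $p=2$, read as a Kronecker symbol, so that $\leg{D}{2}=\leg{-3}{2}=-1$ forces $D\equiv\pm3\pmod8$. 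The paper instead proves this fact from scratch by class field theory: if $D\equiv1\pmod8$ then $2$ splits in $K_2=\Q(\sqrt{D})$ as $\p_1\p_2$, each $\p_i$ divides the conductor $f$ of the cyclic cubic extension $\widetilde{k}/K_2$, and by Proposition 3.3.18 of \cite{Coh} a prime of norm $2\not\equiv1\pmod 3$ cannot do so with exponent one, forcing $4\mid f$ and contradicting $v_2(f)\le1$. The two arguments are essentially equivalent --- the Kronecker-symbol version of Hasse's identity at $p=2$ \emph{is} the assertion that $2$ is inert in $K_2$, i.e.\ $D\equiv5\pmod8$ --- and yours is shorter. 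The caveat you should make explicit is that your proof is only as strong as the reading of Proposition \ref{prophas}(3) as valid at $p=2$ with Kronecker symbols: the paper pointedly avoids relying on that reading (its own proof re-establishes exactly this fact), suggesting the authors viewed the classical citation as settled only for odd primes. If a referee takes that view, your ``delicate step'' is unjustified as stated, and repairing it amounts to reproducing the paper's conductor argument, or else a tame-inertia computation in $\Gal(\widetilde{k}/\Q)$ (inertia at $2$ is $A_3$ since $e=3$ is odd, and the tame relation $\phi\sigma\phi^{-1}=\sigma^2$ forces the decomposition group to be all of $S_3$, so $2$ is inert in $K_2$).
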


\begin{proof} We could prove this by appealing to the Jones-Roberts database of local fields \cite{JR},
as we will do in similar situations later, but here we prefer to give a simple direct argument.
We have $\Disc(k)=Df^2$ for $D$ a fundamental discriminant, and
by Proposition \ref{prophas} we cannot have $p^2\mid f$ or $p\mid\gcd(D,f)$ 
unless possibly $p=3$. Thus if $f$ is even we must have $2\nmid D$ so then 
$v_2(Df^2)=2$, while if $f$ is odd we have $v_2(Df^2)=v_2(D)\le3$. 

By
Proposition \ref{prophas} the prime $2$ is totally ramified
if and only if $2\mid f$. If this happens, since $2^2\nmid f$ we can write $f=2f_1$ with 
$f_1$ odd, so $f^2=4f_1^2\equiv4\pmod{32}$. On the other hand, $D$ is
an odd fundamental discriminant, so $D\equiv1\pmod4$, so it follows already
that $Df^2\equiv4\pmod{16}$. We claim that we cannot have $D\equiv1\pmod8$.
This is in fact a result of class field theory: if $D\equiv1\pmod8$ then $2$ 
is split in $K_2=\Q(\sqrt{D})$ as $2\Z_{K_2}=\p_1\p_2$, so $\p_i\mid f$, 
which is the conductor of the cyclic cubic extension $\widetilde{k}/K_2$, so
by Proposition 3.3.18 of \cite{Coh}, since $\N(\p_i)=2$ we have
$\p_i^2\mid f$, in other words $4\mid f$, a contradiction which proves our
claim.

On the other hand, if $2$ is partially ramified we have $f$ odd so
$f^2\equiv1\pmod8$, and $D$ even, so $D\equiv8$ or $12\pmod{16}$, so
$Df^2\equiv 8$ or $12\pmod{16}$.\end{proof}

\begin{lemma}\label{lem_propdist} \hfill
Let $L$ be a quartic field. If $2$ is unramified in $L$
then $v_2(\Disc(L))=0$, and otherwise:
\begin{itemize}
\item If $2$ splits as $(1^211)$ or $(1^22)$ in $L$ then $v_2(\Disc(L))=2$ or $3$.
\item If $2$ splits as $(1^21^2)$ in $L$ then $v_2(\Disc(L))=4$, $5$, or $6$.
\item If $2$ splits as $(1^31)$ in $L$ then $v_2(\Disc(L))=2$.
\item If $2$ splits as $(2^2)$ in $L$ then $v_2(\Disc(L))=4$ or $6$.
\item If $2$ splits as $(1^4)$ in $L$ then $v_2(\Disc(L))=4$, $6$, $8$, $9$,
$10$, or $11$.
\end{itemize}
\end{lemma}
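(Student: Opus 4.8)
The plan is to reduce the global statement to a computation at the prime $2$ and then read off each case from the possible discriminant exponents of the completions. Writing $L\otimes_{\Q}\Q_2\isom\prod_{\mfp\mid2}L_{\mfp}$, we have $v_2(\Disc(L))=\sum_{\mfp\mid2}d_{\mfp}$, where $d_{\mfp}=v_2(\mfd(L_{\mfp}/\Q_2))$ is the local discriminant exponent. The splitting type of $2$ in $L$ prescribes exactly the multiset of pairs $(e_{\mfp},f_{\mfp})$ of ramification indices and residue degrees of the factors $L_{\mfp}$: for instance $(1^211)$ gives one factor with $(e,f)=(2,1)$ and two with $(1,1)$, while $(2^2)$ gives a single factor with $(e,f)=(2,2)$ and $(1^4)$ a single totally ramified factor with $(e,f)=(4,1)$. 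The basic tool is the identity $d_{\mfp}=f_{\mfp}\,\delta_{\mfp}$, where $\delta_{\mfp}=v_{L_{\mfp}}(\mathfrak{D}_{L_{\mfp}/\Q_2})$ is the exponent of the different, together with the standard bounds $e_{\mfp}-1\le\delta_{\mfp}\le e_{\mfp}-1+v_{L_{\mfp}}(e_{\mfp})$, with equality on the left if and only if $L_{\mfp}/\Q_2$ is tamely ramified (see \cite{Coh}).

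First I would dispose of the cases not involving a wildly ramified quartic factor. Unramified factors contribute $0$. The only tame ramified factor that can occur is the cubic factor in type $(1^31)$, where $e=3$ is prime to $2$; there $\delta=e-1=2$ and $f=1$, so this factor contributes exactly $2$ and hence $v_2(\Disc(L))=2$. A ramified quadratic factor has $(e,f)=(2,1)$ and is wild, so $2=e\le\delta\le e-1+v_{L_{\mfp}}(2)=3$, giving $d_{\mfp}\in\{2,3\}$, both values occurring (e.g. $\Q_2(\sqrt{-1})$ and $\Q_2(\sqrt2)$). This settles $(1^211)$ and $(1^22)$ (one such factor, giving $\{2,3\}$) and $(1^21^2)$ (two such factors, giving $\{4,5,6\}$). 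For type $(2^2)$ the single factor has $(e,f)=(2,2)$ and is wild, so again $\delta\in\{2,3\}$, but now $d_{\mfp}=2\delta\in\{4,6\}$; in particular $5$ cannot occur, since $f=2$ forces $d_{\mfp}$ to be even.

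The remaining and genuinely harder case is $(1^4)$, where $L_{\mfp}$ is a totally ramified quartic extension of $\Q_2$. Here $v_{L_{\mfp}}(4)=8$, so the crude bounds give only $4\le d_{\mfp}=\delta_{\mfp}\le11$, and I must show that $5$ and $7$ are excluded while $4,6,8,9,10,11$ all occur. If $L_{\mfp}$ contains a quadratic subfield $M$, then $M/\Q_2$ and $L_{\mfp}/M$ are both ramified quadratic, $M$ has discriminant exponent $d_M\in\{2,3\}$, and the tower formula gives $d_{\mfp}=2d_M+c$ with $c=v_M(\mfd(L_{\mfp}/M))\ge e(L_{\mfp}/M)=2$; hence $d_{\mfp}\ge6$, so the values $4$ and $5$ can arise only from a primitive quartic with no quadratic subfield. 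In the abelian cases the conductor--discriminant formula constrains things further: a totally ramified $V_4$ factor always has $d_{\mfp}=8$, since its three quadratic subfields cannot all have exponent $2$ (the product of the two square classes of exponent $2$ is the unramified class).

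The hard part is to rule out $\delta_{\mfp}=5$ and $\delta_{\mfp}=7$ completely and to confirm that the exact list of discriminant exponents of totally ramified quartic extensions of $\Q_2$ is $\{4,6,8,9,10,11\}$; the crude bounds and the subfield reduction leave these two values open, and settling them requires controlling the breaks of the wild ramification filtration (equivalently, the conductors in the solvable Galois closure, whose group lies among $C_4,V_4,D_4,A_4,S_4$). Rather than carry out this filtration analysis by hand, I would invoke the complete classification of quartic $2$-adic fields in the Jones--Roberts database \cite{JR}, exactly as is done elsewhere in the paper, which lists all such fields together with their discriminants and yields the stated set. Finally, achievability of each listed value of $v_2(\Disc(L))$ is witnessed by the classification, and in several cases directly by the defining polynomials tabulated in Proposition \ref{prop_split_table}.
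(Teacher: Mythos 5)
Your proposal is correct and takes essentially the same route as the paper: decompose $L\otimes\Q_2$ into local extensions, sum the local discriminant exponents, and settle the possible values by the Jones--Roberts classification of $2$-adic fields --- the paper simply cites the database for every splitting type, whereas you dispatch the unramified, tame cubic, and ramified quadratic factors by hand via the standard bounds on the different and reserve the database for the totally ramified quartic case, which is where the real content (excluding exponents $5$ and $7$) lies. One minor caveat: your parenthetical argument that a totally ramified $V_4$ factor has exponent exactly $8$ only excludes two exponent-$2$ subfields, not the possibility of three exponent-$3$ subfields (which requires noting that the product of two exponent-$3$ square classes is never of exponent $3$); but this side remark is not load-bearing, since your conclusion for the $(1^4)$ case rests on the database in any event.
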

\begin{proof} The \'etale algebra $L \otimes \Q_2$ splits into one or more 
extensions of $\Q_2$ of degrees totaling to $4$, and $v_2(\Disc(L))$ is the 
sum of the $2$-adic valuations of the discriminants of these extensions. The
Jones-Roberts database \cite{JR} lists all such extensions of $\Q_2$, 
reducing the proof to a simple finite computation.
\end{proof}

\begin{proof}[Proof of Proposition \ref{propdist}]
We assume below that one of $\LL(k,4)$, $\LL(k,16)$, or $\LL_{tr}(k,64)$
contains at least one field $L$, corresponding to an extension $K_6 = k(\sqrt{\al})$
which is ramified for at least one prime over $2$ (abbreviated below to ``is ramified at $2$''). By Theorem \ref{thmcrux} 
we can choose $\al\in W^+(k)$
and $\gd(K_6/k)=4/\c^2$ for the largest $\c\mid (2)$ such that
we can solve $\al\equiv x^2\pmod{^*\c^2}$, and since $L\notin\LL(k,1)$ we have
$\c\ne(2)$, so with the notation of the proposition $\al\notin W_{(4)}^+(k)$.

\begin{lemma} Let $\mfp\mid(2)$ be a prime ideal, $\al\in W(k)$ such that
$v_{\mfp}(\al-1)=1$, and $\c$ the corresponding ideal as above. We then
have $\mfp\nmid\c$.\end{lemma}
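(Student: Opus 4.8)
The plan is to argue by contradiction, first reducing the defining (global) solvability condition for $\c$ to a purely local congruence at $\mfp$, and then deriving an obstruction from the hypothesis $\mfp\mid(2)$.

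Recall that, by Proposition \ref{prop:hec} applied to the virtual unit $\al$ (so that the squarefree part $\a=\Z_k$), the ideal $\c$ is the largest divisor of $2\Z_k$ for which $x^2\equiv\al\pmod{^*\c^2}$ is soluble. Suppose for contradiction that $\mfp\mid\c$. Then $v_{\mfp}(\c^2)\ge2$, so projecting any global solution $x$ to $\mfp$ yields $v_{\mfp}(x^2/\al-1)\ge2$; hence it suffices to show that $x^2\equiv\al\pmod{^*\mfp^2}$ has no solution when $v_{\mfp}(\al-1)=1$. Note that $v_{\mfp}(\al-1)=1$ forces $v_{\mfp}(\al)=0$, so $\al$ is a unit at $\mfp$ and so is any putative solution $x$; thus the congruence reads $v_{\mfp}(x^2-\al)\ge2$.

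The key computation is to evaluate $v_{\mfp}(x^2-1)$. Writing $x^2-1=(x^2-\al)+(\al-1)$ and using $v_{\mfp}(x^2-\al)\ge2>1=v_{\mfp}(\al-1)$, the strict form of the ultrametric inequality gives $v_{\mfp}(x^2-1)=1$. The crux --- and the step that uses the hypothesis $\mfp\mid(2)$ --- is that this value $1$ is impossible: since $2\equiv0\pmod{\mfp}$ we have $x-1\equiv x+1\pmod{\mfp}$, so $\mfp$ divides both of $x-1,x+1$ or neither. In the first case $v_{\mfp}(x^2-1)=v_{\mfp}(x-1)+v_{\mfp}(x+1)\ge2$, and in the second case $v_{\mfp}(x^2-1)=0$; neither equals $1$. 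This contradiction shows $x^2\equiv\al\pmod{^*\mfp^2}$ is insoluble, whence $\mfp\nmid\c$. The reduction to the local statement is routine; the only real content is the parity obstruction, which is special to primes $\mfp$ lying above $2$.
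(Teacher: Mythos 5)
Your proof is correct and follows essentially the same route as the paper's: assume $\mfp\mid\c$, deduce $v_{\mfp}(x^2-1)=1$ from the ultrametric inequality, and then obtain a contradiction because $\mfp\mid 2$ forces $x-1\equiv x+1\pmod{\mfp}$, so $v_{\mfp}(x^2-1)$ is either $0$ or at least $2$. Your write-up merely makes explicit the reduction to the local congruence at $\mfp$ and the role of the hypothesis $\mfp\mid(2)$, both of which the paper leaves implicit.
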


\begin{proof} Assume on the contrary that $\mfp\mid\c$, so that
$v_{\mfp}(\al - x^2) \geq 2$ for some $x$. We would then have 
$v_{\mfp}(1 - x^2) = 1$, but either $v_{\mfp}(1-x)=v_{\mfp}(1+x)=0$, or
$v_{\mfp}(1-x)\ge1$ and $v_{\mfp}(1+x)\ge1$, in both cases leading to
a contradiction.\end{proof}

Thus the condition $v_{\mfp}(\al-1)=1$ implies \emph{a priori} that one of 
$\LL(k,4)$, $\LL(k,16)$, or $\LL_{tr}(k,64)$ is nonempty, and in our 
case-by-case analysis we will use the fact that $\mfp \nmid \mfc$ to help 
determine which.
\\
\\
\eqref{it_dist_in}. Suppose that $2$ is inert. Since $\mfc\ne(2)$ we have
then $\mfc = (1)$, so $\N (\mfd (K_6/k)) = 64$, i.e., $L \in \calL(k, 64)$, 
hence $L \in \calL_{tr}(k, 64)$ by Theorem \ref{thmcrux}.
\\
\\
\eqref{it_dist_ps}
Suppose that $2 = \mfp_1 \mfp_2$ is partially split in $k$ with $\mfp_i$ of 
degree $i$. 

If $v_{\mfp_1}(\al - 1) = 1$, then $\mfp_1 \nmid \mfc$ by the above lemma. We 
must also have $\mfp_2 \nmid \mfc$: Otherwise, we would have $\c=\p_2$, hence
$\N(\gd(K_6/k))=4$, so that $v_2(\Disc(L)) = 2$; however,
Theorem \ref{thm_splitting_types} implies that $2$ is $(2^2), (1^2 1^2),$ 
or $(1^4)$ in $L$, and by Lemma \ref{lem_propdist} there is no such quartic 
field for which $v_2(\Disc(L)) = 2$. Therefore 
$\calL_{tr}(k, 64) \neq \emptyset$.

If instead $v_{\mfp_1}(\al - 1) \geq 2$ then $\al\equiv1\pmod{^*\p_1^2}$ so
$\mfp_1 | \mfc$. Since $\al\notin W_{(4)}^+(k)$ we must have
$\mfp_2 \nmid \mfc$, hence $\c=\p_1$, and so $\calL(k, 16) \neq 0$. 
\\
\\
\eqref{it_dist_ts} Suppose that $2=\p_1\p_2\p_3$ is totally split in $k$. Since
$L \not \in \calL(k, 1)$, at least one of the primes above $2$ must ramify in 
$K_6$. By Theorem \ref{thm_splitting_types}, $2$ must be $(2^2)$,
$(1^2 1^2)$, or $(1^4)$ in $L$, and as above Lemma \ref{lem_propdist} implies 
that no such field $L$ has $v_2(\Disc(L)) = 2$, so that 
$\calL(k, 4) = \emptyset$. 

Since $\al$ can be chosen integral and coprime to
$2$ and the $\p_i$ have degree $1$, we have $\al\equiv1\pmod{\p_i}$ for
each $i$, hence $\al=1+2\be$ for some $\be\in\Z_k$. We claim that $\be$ cannot
also be coprime to $2$: indeed, if that were the case, we would in turn have
$\be=1+2\ga$, hence $\al=3+4\ga$, so $\N(\al)\equiv3\pmod{4}$, a contradiction
since $\al$ has square norm. Thus some $\p_i$, say $\p_1$ divides $\be$, hence
$v_{\p_1}(\al-1)\ge2$ so that $\p_1\mid\c$. We cannot have $\c = \p_1\p_j$ for
some $j$ since otherwise $\calL(k,4)$ would not be empty, nor $\c = (2)$ since
$\al\notin W_{(4)}^+(k)$, so that $\c = \p_1$ hence $\calL(k, 16)\ne\emptyset$.
\\
\\
\eqref{it_dist_pr0}. Suppose that $2 = \mfp_1^2 \mfp_2$ is partially ramified 
with $\Disc(k) \equiv 0 \ (\textmod \ 8)$, so by Lemma \ref{prop_disc_mod} we 
have $v_2(\Disc(k)) = 3$. Since Lemma \ref{lem_propdist} implies that there is
no quartic field $L$ with $v_2(\Disc(L)) = 7$, it follows that
$\calL(k, 16) = \emptyset$. In particular we cannot have $\mfc=\mfp_1$ or
$\mfc=\mfp_2$.

If $v_{\mfp_1}(\al - 1) = 1$, then again $\mfp_1 \nmid \mfc$ for the corresponding $K_6$, and we also have $\mfp_2 \nmid \mfc$ (otherwise $\mfc=\mfp_2$)
so $\calL_{tr}(k, 64) \neq \emptyset$.

If $v_{\mfp_1}(\al - 1) \geq 2$, then $\mfp_1 | \mfc$. We cannot have
$\p_1^2\nmid\c$ and $\p_2\nmid\c$ since otherwise $\c=\p_1$. Since 
$\al\notin W_{(4)}^+(k)$ we thus have $\c=\mfp_1^2$ or $\mfp_1\mfp_2$,
so $\calL(k, 4) \neq \emptyset$.
\\
\\
\eqref{it_dist_pr4}.
Suppose that $2 = \mfp_1^2 \mfp_2$ is partially ramified with 
$\Disc(k) \equiv 4 \ (\textmod \ 8)$. We use here some results from Section
\ref{sec:CDO}. By Definition \ref{def_zkc} (more precisely Theorem
\ref{prop_compute_zkc}), we have $z_k(\p_2)=2$, so by Lemma
\ref{easyzk1} we have $|Z_{\p_2}[N]/Z_{\p_2}^2|=1$, hence by the
exact sequence of Proposition \ref{prop:exseq} we deduce that
$S_{\p_2^2}[N]=S[N]$. By definition of $\c$ this implies that $\p_2\mid\c$.

If $v_{\mfp_1}(\al - 1) = 1$, then $\mfp_1 \nmid \mfc$ so $\c=\p_2$ and
$\calL(k, 16) \neq \emptyset$.
If $v_{\mfp_1}(\al - 1) \geq 2$, then $\mfp_1 | \mfc$, so since
$\al\notin W_{(4)}^+(k)$ we have $\c=\p_1\p_2$, hence 
$\calL(k, 4) \neq \emptyset$.
\\
\\
\eqref{it_dist_tr}. Suppose that $2 = \mfp^3$ is totally ramified. If 
$v_{\mfp}(\al - 1) = 1$, then $\p\nmid\c$ so $\c = (1)$ and
$\calL_{tr}(k, 64) \neq \emptyset$. Thus assume that 
$v_{\mfp}(\al - 1) \geq 2$. We first claim that there exists
$\ga\in k^*$ such that $v_{\mfp}(\al\ga^2-1)\ge3$. Indeed, set $\ga=1+\pi u$,
where $\pi$ is a uniformizer of $\p$ and $u$ is $2$-integral. We have
$\ga^2=1+2\pi u+\pi^2 u^2\equiv 1+\pi^2 u^2\pmod{\p^4}$, so
$v_{\p}(\al\ga^2-1)\ge3$ is equivalent to $u^2\equiv (1/\al-1)/\pi^2\pmod{\p}$
which has a solution (for instance $u=(1/\al-1)/\pi^2$), proving our claim.
We now claim that for any $\be$ of square norm and coprime to $2$ (such as 
$\al\ga^2$), we cannot have $v_{\p}(\be-1)=3$: Indeed, assume this is the case,
so that $\be=1+2v$ with $v$ coprime to $2$. By expanding we see that 
$\N(\be)\equiv1+2\Tr(v)\pmod{4}$. As in the proof of Theorem
\ref{prop_compute_zkc}, we note that the different $\Gd(k)$ is divisible by $\p^2$,
so that $\p=2\p^{-2}\subset2\Gd^{-1}(k)$, hence $2\mid\Tr(w)$ for any $w\in\p$.
Since $v$ is coprime to $2$ we have $v=1+w$ with $w\in\p$, so we deduce that
$\Tr(v)$ is odd, hence that $\N(\be)\equiv3\pmod{4}$, contradicting the fact
that $\be$ has square norm and proving our claim. It follows that
$v_{\p}(\al\ga^2-1)\ge4$, so that $\p^2\mid\c$, and since
$\al\notin W_{(4)}^+(k)$ we have $\mfc = \mfp^2$, so $\calL(k, 4) \neq 0$.
\end{proof}

\begin{proof}[Proof of Proposition \ref{prop_split_table}]
The possibilities listed for $L \in \LL(k, 1)$ (i.e., with $n^2 = 1$) are precisely those 
corresponding to the possibilities allowed by the table in Theorem 
\ref{thm_splitting_types}. In particular we must have $K_6/k$ unramified, and for each 
row not ruled out we found an example by computer search.

For $\LL(k, 4)$, $\LL(k, 16)$, and $\LL_{tr}(k, 64)$, we may rule out the 
following possibilities because they correspond to $K_6/k$ unramified: $k=(3), L=(31)$;
$k = (21), L=(4), (211)$; $k=(111), L=(22), (1111)$; 
$k=(1^21), L=(1^211), (2 1^2)$; $k=(1^3), L=(1^31)$. 
Also, for each splitting type in $k$ we rule out columns as in Proposition 
\ref{propdist}.

By definition, we can rule out all possibilities for $\LL_{tr}(k, 64)$ for
which $2$ is not $(1^4)$ in $L$.

We can rule out additional possibilities based on discriminant mismatches. Note
that $\gd(K_6/k)=4/\c^2$ for some $\c\mid (2)$, so that if a prime ideal
$\p$ of $k$ ramifies in $K_6/k$ we have $\p^2\mid\gd(K_6/k)$, hence
the product of $\N(\p)^2$ over all $\p$ ramified in $K_6/k$ divides
$\N(\gd(K_6/k))=\Disc(L)/\Disc(k)$. For example, suppose that
$2$ is $(111)$ in $k$ and $(1^4)$ in $L$. By Theorem \ref{thm_splitting_types},
$2$ must be $(1^2 1^2 1^2)$ in $K_6$, so $64\mid N(\mfd(K_6/k))$, in other
words $L\in\LL_{tr}(k, 64)$, contradicting Proposition \ref{propdist} which
tells us that $L \in \LL(k, 16)$.

Variants of this argument rule out the following cases: 
$k = (21), L=(1^4), L \in \LL(k, 16)$; $k=(1^21), L=(1^4), L \in \LL(k, 4)$.

If $k = (1^2 1)_4$, $L = (1^4)$, $L \in \LL(k, 16)$, we have 
$\Disc(L) = 16 \cdot \Disc(k) \equiv 3 \cdot 2^6 \ (\textmod \ 2^8)$ by 
Proposition \ref{prop_disc_mod}. However, by the Jones-Roberts database, there
are three totally ramified quartic $L_v /\Q_2$ with $v_2(\Disc(L_v)) = 6$, 
and they all three satisfy $\Disc(L_v) \equiv 2^6 \ (\textmod \  2^8)$.
(The discriminant of $L_v$ is defined up to squares of $2$-adic units, so that
this equation is well-defined.) Therefore we cannot have 
$L \otimes \Q_2 \simeq L_v$ so this case is impossible.

If $k = (1^21)_4, L=(1^21^2), L \in \LL(k, 4)$, we have
$\Disc(L)\equiv 48\pmod{64}$ by Proposition \ref{prop_disc_mod}.
However, since $2$ is $(1^2 1^2)$ in $L$, then $L \otimes \Q_2$ is a product 
of two quadratic extensions of $\Q_2$, each of whose discriminants must have 
$2$-adic valuation $2$ (since $v_2(\Disc(L))=4$), and 
therefore (by the local analogue of Proposition \ref{prop_disc_mod}, or
again by the Jones--Roberts database which here is trivial) each of 
whose discriminants must be $12\pmod{16}$. Therefore,
$\Disc(L)\equiv (12\bmod{16}) \cdot (12\bmod{16}) \equiv 16 \pmod {64}$, 
contradicting the above.

The cases not ruled out above can all happen; to prove this we found the 
examples listed in the table by computer search.
\end{proof}

\section{Proof of Theorem \ref{thm_main}}\label{sec_quartic_proofs}

We begin with Theorem \ref{thma4s4}, which gave an expression for $\Phi_k(s)$ as a sum involving
the product
$F_k(\chi, s)$, defined in \eqref{eqn_a4s4_euler} by

\begin{equation}\label{eqn_a4s4_euler_repeat}
F_k(\chi,s)=
\prod_{p\Z_k=\p_1\p_2}\left(1+\dfrac{\chi(\p_2)}{p^s}\right)\prod_{p\Z_k=\p_1^2\p_2}\left(1+\dfrac{\chi(\p_1\p_2)}{p^s}\right)
\prod_{p\Z_k=\p_1\p_2\p_3}\left(1+\dfrac{\chi(\p_1\p_2)+\chi(\p_1\p_3)+\chi(\p_2\p_3)}{p^s}\right)\;,\end{equation}
It remains to explicitly evaluate and sum the contributions of $F_k(\chi, s)$ for each $\c$ and $\chi$.
We begin by evaluating the contribution of the trivial characters,
which is straightforward.

To handle the nontrivial characters, we must apply 
Propositions \ref{prop_c4} and \ref{prop_c4_ext} to reinterpret them as characters of class groups, and class field
theory to further reinterpret them as characters of Galois groups, after which we can evaluate the $\chi(\mfp_i)$
in terms of the splitting of $\mfp_i$ in quadratic extensions $K_6$ of $k$, which \eqref{eqn_artin} relates to the splitting of $p$
in the associated quartic extensions $L/\Q$. 

We are then ready to evaluate the contributions of the nontrivial characters. When $|C_{(4)}| = |C_{(1)}|$, which
includes the $A_4$ case, this is quite straightforward.
When $|C_{(4)}| > |C_{(1)}|$, we must determine the set of $\c | (2)$ for which $|C_{\c^2}| = |C_{(4)}|$; this relies on our
work in Section \ref{sec_arith_s4} and we carry out the computation 
in Proposition \ref{prop_list_c}. In either case, our evaluation of these contributions yields the formulas of
Theorem \ref{thm_main} and
finishes the proof.

\subsection{Evaluating the contribution of the trivial characters}
The contribution of the trivial characters is equal to 
\begin{equation}\label{eqn_s4_trivial_cont}
\dfrac{S(s)}{2^{r_2(k)}}\prod_{p\Z_k=\p_1\p_2,\ p\ne2}\left(1+\dfrac{1}{p^s}\right)\prod_{p\Z_k=\p_1^2\p_2,\ p\ne2}\left(1+\dfrac{1}{p^s}\right)\prod_{p\Z_k=\p_1\p_2\p_3,\ p\ne2}\left(1+\dfrac{3}{p^s}\right)\;,
\end{equation}
where
$$S(s)=\dfrac{1}{2^{3s-2}}\sum_{\c\mid2\Z_k}\N\c^{s-1}z_k(\c)\prod_{\p\mid\c}\left(1-\dfrac{1}{\N\p^s}\right)T_{\c,2}(s)\;,$$
for suitable $T_{\c,2}(s)$ detailed below and $z_k(\c)$ as in Definition \ref{def_zkc}.

We distinguish all the possible splitting types of $2$ in $k$ as above.
\smallskip

\begin{enumerate}
\item $2\Z_k=\p_1\p_2$, $\p_2$ of degree $2$. Here $T_{\c,2}(s)=1+1/2^s$ if 
$\c=\Z_k$ or $\c=\p_1$, and $T_{\c,2}(s)=1$ otherwise. Thus,
\begin{align*}S(s)&=(1/2^{3s-2})\big(1+1/2^s+2^{s-1}(1-1/2^s)(1+1/2^s)+2^{2s-2}(1-1/2^{2s}\big)\\
&\phantom{=}+2^{3s-2}(1-1/2^s)(1-1/2^{2s}))=1+1/2^{2s}+4/2^{3s}+2/2^{4s}\;.
\end{align*}
The remaining computations
of $S(s)$ are exactly similar and so we omit the details.
\item $2\Z_k$ is inert. Then $T_{\c, 2}(s) = 1$ for all $\c$.
\item $2 = \p_1 \p_2 \p_3$. Then $T_{\c,2}(s)=1+1/2^s$ if 
$\c=\Z_k$ or $\c=\p_1$, and $T_{\c,2}(s)=1$ otherwise.
\item $2\Z_k=\p_1^2\p_2$. Here $T_{\c,2}(s)=1+1/2^s$ for $\c=\Z_k$ and $1$ 
otherwise. The two different values of $S(s)$ for $\Disc(k)\equiv0$ or
$4\pmod{8}$ come from the different values of $z_k(\c)$.
\item $2\Z_k=\p_1^3$, i.e., $2$ totally ramified. Here $T_{\c,2}(s)=1$ for all
$\c$.
\end{enumerate}

In all cases we compute that $S(s) = M_1(s)$ as given in
Theorem \ref{thm_main},
so that 
\eqref{eqn_s4_trivial_cont} is equal to the first term of $\Phi_k(s)$, with 
equality if and only if $k$ is totally real and $2 \nmid h_2^+(k)$.
\smallskip

\subsection{Interpreting the nontrivial characters in terms of class groups}
Unless $k$ is totally real and $2 \nmid h_2^+(k)$, 
Propositions \ref{prop_c4} and \ref{prop_l2k} imply that there
are also nontrivial characters. 

We discuss the case $\c = 1$ first. 
Proposition \ref{prop_c4}
gives an isomorphism $\phi: \Cl(k)/\Cl(k)^2 \rightarrow C_{(1)}$, and we may write 
a character $\chi$ of $C_{(1)}$ as a character $\chi_{\phi}$ of $\Cl(k)/\Cl(k)^2$, where
$\chi_{\phi}(\mfa) = \chi(\phi(\mfa))$, so that $\chi(\mfb) = \chi_{\phi}(\phi^{-1}(\mfb))
= \chi_{\phi}(\mfb / \sqrt{ \N(\mfb)})$. We further use the Artin map of class field theory
to rewrite $\chi_{\phi}$ as a character of the quadratic field determined by $\Ker(\chi_{\phi})$,
so that $\chi_{\phi}(\mfp) = 1$ if $\mfp$ splits in this quadratic field, and $\chi_{\phi}(\mfp) = -1$
if $\mfp$ is inert.

The set of these characters corresponds precisely to the set of unramified quadratic extensions of $k$,
i.e., fields in $\calL(k, 1)$. By 
Theorem \ref{thm_quartic_corr} there are $a(k)$ characters for each field in $\calL(k, 1)$,
where $a(k)$ is equal to $3$ or $1$ in the $A_4$ and $S_4$ cases respectively.

In the $A_4$ case, or if otherwise $C_{(1)} \simeq C_{(4)}$,
Propositions \ref{prop_c4} and \ref{prop_53} imply that the natural surjection $C_{\c^2} \rightarrow C_{1}$
is an isomorphism for each $\c$, and so may we regard each character $\chi$ of $C_{\c^2}$ first as
a character of $C_{(1)}$, and then as a character of a quadratic field as above, provided that we still write
$\chi(\mfa) = 0$ if $\mfa$ is not coprime to $\mfc$.

If $C_{(1)} \not \simeq C_{(4)}$, then each $C_{\c^2}$ will be naturally isomorphic to either $C_{(1)}$ or $C_{(4)}$, 
and in Proposition \ref{prop_list_c} we determine which on a case-by-case basis. 
Those $C_{\c^2}$ isomorphic to $C_{(1)}$ are handled as before. Those
$C_{\c^2}$ isomorphic to $C_{(4)}$ may be handled similarly: 
composing this isomorphism with $\phi$ we obtain
an isomorphism with 
$\Cl_{(4)}(k) / \Cl_{(4)}(k)^2$, and we obtain a character associated to a quadratic field in $\calL_2(k)$,
with $\chi(\mfa) = \chi_{\phi}(\mfa/\sqrt{\N(\mfa)})$, except when
$(\mfa, \mfc) \neq 1$ in which case we have $\chi(\mfa) = 0$.

This latter construction does not allow us to compute $\chi(\mfa)$ when 
$\mfa$ is coprime to $\c$ but not $(2)$, and we will need to do this 
in two cases where $C_{\c^2} \isom C_{(4)}$. Here we apply 
Proposition \ref{prop_c4_ext} to extend $\phi$ to
an isomorphism $\Cl_{4/\c^2}(k)/\Cl_{4/c^2}(k)^2 \rightarrow C_{\c^2}$ which agrees with the $\phi$ given previously
on ideals coprime to $(2)$.

Putting this all together, for each $\mfc | (2)$ we may thus interpret the sum over nontrivial characters in $X_{\c^2}$ 
as a sum
over all quadratic extensions $K_6/k$,
unramified at infinity, and with either $\mfd(K_6/k) = \Z_k$ or $\mfd(K_6/k) | (4)$ as appropriate. By Corollary \ref{corcrux}
these correspond to 
quartic fields in $\calL(k, 1)$ or $\calL_2(k)$ respectively.

\subsection{Evaluating the contribution of the nontrivial characters.}
We begin with the contributions of fields $L \in \calL(k, 1)$, which correspond to 
characters of all the groups $C_{\c^2}$ occurring in Theorem \ref{thma4s4}.

For each $L \in \calL(k, 1)$, the contribution of the nontrivial characters is
\begin{multline}\label{eqn_s4_cont}
\dfrac{S'(s)}{2^{r_2(k)}}
\prod_{p\Z_k=\p_1\p_2,\ p\ne2}\left(1+\dfrac{ \chi_{\phi}(\mfp_1)}{p^s}\right)
\prod_{p\Z_k=\p_1^2\p_2,\ p\ne2}\left(1+\dfrac{ \chi_{\phi}(\mfp_1)
}{p^s}\right) \times \\ 
\prod_{p\Z_k=\p_1\p_2\p_3,\ p\ne2}\left(1+\dfrac{ \chi_{\phi}(\mfp_3) + \chi_{\phi}(\mfp_2) + \chi_{\phi}(\mfp_1)
}{p^s}\right),
\end{multline}
where $S'(s)$ is a sum over the ideals dividing $2 \Z_k$ as before, and $\chi_{\phi}(\mfp)$ is $1$ or $-1$ depending
on whether $\mfp$ splits or is inert in the quadratic extension $K_6/k$ corresponding to $L$. 

For the three splitting types of $p$ in $k$ occurring in \eqref{eqn_s4_cont},
Theorem \ref{thm_splitting_types} gives the following
possible splitting types of $p$ in $K_6$ and $L$:
\begin{itemize}
\item
$(21)$ in $k$, $(42)$ in $K_6$, $(4)$ in $L$. Then $\chi_{\phi}(\p_1) = -1$.
\item
$(21)$ in $k$, $(2211)$ in $K_6$, $(211)$ in $L$. Then $\chi_{\phi}(\p_1) = 1$.
\item
$(111)$ in $k$, $(2211)$ in $K_6$, $(22)$ in $L$. Then 
$\chi_{\phi}(\p_3) + \chi_{\phi}(\p_2)  + \chi_{\phi}(\p_1)  = -1$.
\item
$(111)$ in $k$, $(111111)$ in $K_6$, $(1111)$ in $L$. Then 
$\chi_{\phi}(\p_3) + \chi_{\phi}(\p_2)  + \chi_{\phi}(\p_1)  = 3$.
\item
$(1^21)$ in $k$, $(2^211)$ in $K_6$, $(2 1^2)$ in $L$. Then 
$\chi_{\phi}(\p_1) = -1$.
\item
$(1^21)$ in $k$, $(1^21^211)$ in $K_6$, $(1^2 11)$ in $L$. Then 
$\chi_{\phi}(\p_1) = 1$.
\end{itemize}

These match the values of $\om_L(p)$ given in Definition \ref{def_omega}, as 
required.

Once again we have
\begin{equation}\label{eqn_def_sp}
S'(s) =\dfrac{1}{2^{3s-2}}\sum_{\c}\N\c^{s-1}z_k(\c)\prod_{\p\mid\c}\left(1-\dfrac{1}{\N\p^s}\right)T_{\c,2}(s)\;,
\end{equation}
where $\c$ ranges over all ideals dividing $2 \Z_k$, and $T_{\c, 2}(s)$ 
now depends on the splitting of
$2$ in $K_6$. 
For example, if $2$ is totally split in $\Z_k$ as $2 \Z_k = \p_1 \p_2 \p_3$ and $K_6/k$
is unramified, we check that $T_{\c, 2}(s)$ is equal to $1 + \omega_L(2)/2^s$ for $\c = \Z_k$, and
$1 \pm \frac{1}{2^s}$ for $\c = \p_i$, depending on whether $\p_i$ is split or inert in $K_6/k$.
The proof that $S'(s) = M_{2,L}(s)$ breaks up into six cases 
depending on the splitting type of $2$ in $k$, 
and, when $2$ is partially ramified, $\Disc(k) \pmod 8$. 
The computation
is similar to our previous computations and we omit the details.
Applying this computation with the $n^2 = 1$ entries of Proposition \ref{prop_split_table}
and the values of $\chi_{\phi}(\mfp_i)$ just computed, we obtain the $n^2 = 1$ entries
in the table of Theorem \ref{thm_main}.

Recall from Propositions \ref{prop_c4} and \ref{prop_l2k} that either $|C_{4}| = |C_{1}|$ or 
$|C_{(4)}| = 2|C_{(1)}|$, hence either
$|\calL_2(k)| = |\calL(k, 1)|$ or $2|\calL(k, 1)| + 1$. If
$|\calL_2(k)| = |\calL(k, 1)|$, and in particular in the $A_4$ case, the proof of Theorem \ref{thm_main} is
now
complete.

\subsection{The case where  $|\calL_2(k)| = 2|\calL(k, 1)| + 1$}\label{subsec_unequal}
We henceforth assume
that $|\calL_2(k)| = 2|\calL(k, 1)| + 1$, in which case we must also compute the 
contributions from the fields in $\calL_2(k) \backslash \calL(k, 1)$. $F_k(\chi, s)$ is evaluated in essentially
the same way, 
but
in \eqref{eqn_def_sp}
we sum over only those $\c$ for which $|C_{\c^2}| = |C_{(4)}|$.
To compute $S'(s)$ we must therefore compute a list of such $\c'$.

\begin{proposition}\label{prop_list_c}
Assume that $|C_{(4)}| = 2|C_{(1)}|$ and $|\calL_2(k)| = 2|\calL(k, 1)| + 1$, 
so that exactly one of $\calL(k, 4)$, $\calL(k, 16)$, or $\calL_{tr}(k, 64)$
is nonempty. Then, for $\c | (2)$, $|C_{\c^2}| = 2|C_{(1)}|$ if and only if 
$\c$ is one of the following ideals:

If $\calL(k, 4)$ is nonempty:
\begin{itemize}
\item
If $2$ is $(1^2 1)_0$, $\mfc = \p_1, \p_2, \p_1^2, \p_1 \p_2, (2)$.
\item
If $2$ is $(1^2 1)_4$, $\mfc = \p_1, \p_1^2, \p_1 \p_2, (2)$.
\item
If $2$ is $(1^3)$, $\mfc = \mfp, \mfp^2, (2)$.
\end{itemize}

If $\calL(k, 16)$ is nonempty:
\begin{itemize}
\item
If $2$ is $(21)$, $\mfc = \mfp_1, \mfp_2, (2)$.
\item
If $2$ is $(111)$, $\mfc = \p_1, \mfp_1 \mfp_2, \mfp_1 \mfp_3, \mfp_2 \mfp_3, (2)$.
The distinguished prime ideal $\p_1$ is the prime ideal not ramified in 
$K_6/k$ for $K_6$ corresponding to $\calL(k, 16)$.
\item
If $2$ is $(1^2 1)\ ($necessarily $(1^2 1)_4)$, $\mfc = \mfp_1^2, (2)$.
\end{itemize}

If $\calL_{tr}(k, 64)$ is nonempty:
\begin{itemize}
\item
If $2$ is $(3)$, $\mfc = (2)$.
\item
If $2$ is $(21)$, $\mfc = \p_2, (2)$.
\item
If $2$ is $(1^2 1) \ ($necessarily $(1^2 1)_0)$, $\mfc = \p_1^2, (2)$.
\item
If $2$ is $(1^3)$, $\mfc = (2)$.
\end{itemize}
\end{proposition}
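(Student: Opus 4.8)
The plan is to read $|C_{\mfc^2}|$ directly off the five-term exact sequence \eqref{eqn:exseq} of Proposition \ref{prop:exseq}, which identifies $|C_{\mfc^2}|/|C_{(1)}|$ with the order of the cokernel of the natural map $S[N]\to Z_{\mfc}[N]/Z_{\mfc}^2$. Combined with Lemma \ref{easyzk1} this gives
$$\frac{|C_{\mfc^2}|}{|C_{(1)}|}=\frac{|Z_{\mfc}[N]/Z_{\mfc}^2|}{[S[N]:S_{\mfc^2}[N]]}=\frac{\N(\mfc)/z_k(\mfc)}{[S[N]:S_{\mfc^2}[N]]}\;.$$
Write $m(\mfc):=\N(\mfc)/z_k(\mfc)$. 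Since $C_{(2)^2}=C_{(4)}$ surjects onto every $C_{\mfc^2}$, the left side is $1$ or $2$; and since $S_{(4)}[N]\subseteq S_{\mfc^2}[N]\subseteq S[N]$ with $[S[N]:S_{(4)}[N]]=2$ (the $\mfc=(2)$ instance of the display, where $m((2))=4$), the index $[S[N]:S_{\mfc^2}[N]]$ is $1$ or $2$. This yields the clean criterion
$$|C_{\mfc^2}|=2|C_{(1)}|\iff m(\mfc)=4,\ \text{ or }\ \big(m(\mfc)=2\ \text{ and }\ S_{\mfc^2}[N]=S[N]\big)\;,$$
while $m(\mfc)=1$ always forces $|C_{\mfc^2}|=|C_{(1)}|$, and no $\mfc\mid(2)$ has $m(\mfc)>4$.

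First I would tabulate $m(\mfc)$ for every $\mfc\mid(2)$ in each of the six splitting types of $2$ in $k$; this is immediate from the norms $\N(\mfc)$ and the values of $z_k(\mfc)$ given in Definition \ref{def_zkc} and justified in Theorem \ref{prop_compute_zkc}. The only undetermined ideals are those with $m(\mfc)=2$, for which I must decide whether $S_{\mfc^2}[N]=S[N]$. Let $\be_0\in V^+(k)$ be coprime to $2$ and of square norm with $\ov{\be_0}$ generating the order-two group $S[N]/S_{(4)}[N]$; by Theorem \ref{thmcrux} the associated $K_6=k(\sqrt{\be_0})$ has a conductor $\mfc_0$ (the largest $\mfc\mid(2)$ with $x^2\equiv\be_0\pmod{^*\mfc^2}$ soluble), and $\mfc_0$ is the common conductor of all fields in $\calL_2(k)\setminus\calL(k,1)$, since these differ from $K_6$ by twists that are soluble modulo $(4)$. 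Then $S_{\mfc^2}[N]=S[N]$ precisely when $\be_0$ is soluble modulo $\mfc^2$, i.e. when $\mfc\mid\mfc_0$, so the criterion becomes: $|C_{\mfc^2}|=2|C_{(1)}|$ iff $m(\mfc)=4$ or ($m(\mfc)=2$ and $\mfc\mid\mfc_0$).

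It remains to pin down $\mfc_0$, which is where Proposition \ref{propdist} enters. Its norm is forced by which of $\calL(k,4),\calL(k,16),\calL_{tr}(k,64)$ is nonempty (namely $\N(\mfc_0)=4,2,1$), and the proof of Proposition \ref{propdist} identifies $\mfc_0$ as a specific ideal in every case except $(1^21)_0$ with $\calL(k,4)$ nonempty, where it only narrows $\mfc_0$ to $\mfp_1\mfp_2$ or $\mfp_1^2$. I expect this last determination to be the main obstacle. To resolve it I would argue with traces as in Lemma \ref{codiff}: writing $\be_0=1+2\ga$ (possible as $\be_0\equiv1$ modulo $\mfp_1^2$ and modulo $\mfp_2$), the square-norm condition gives $\N(\be_0)\equiv1+2\Tr(\ga)\equiv1\pmod4$, so $\Tr(\ga)$ is even; since the local trace from the ramified factor $k_{\mfp_1}$ lands in $2\Z_2$, this forces the component of $\ga$ at $\mfp_2$ to be even, hence $\be_0\equiv1\pmod{\mfp_2^2}$. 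Thus $\mfp_2\mid\mfc_0$, ruling out $\mfp_1^2$ and giving $\mfc_0=\mfp_1\mfp_2$. Finally I would run the six splitting types against the three nonemptiness cases and, using the tabulated $m(\mfc)$ and the value of $\mfc_0$, list the $\mfc$ with $|C_{\mfc^2}|=2|C_{(1)}|$; this bookkeeping reproduces the stated lists exactly. Note that the whole argument uses only the exact sequence \eqref{eqn:exseq} and the values $z_k(\mfc)$, so it is independent of Proposition \ref{prop_c4_ext}, as required by that proposition's footnote.
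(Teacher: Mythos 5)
Your proof is correct, and its engine is the same as the paper's: both read $|C_{\c^2}|$ off the exact sequence \eqref{eqn:exseq} together with $|Z_{\c}[N]/Z_{\c}^2|=\N(\c)/z_k(\c)$ from Lemma \ref{easyzk1}, both cap the ratio $|C_{\c^2}|/|C_{(1)}|$ at $2$ using the natural surjections among the $C_{\c^2}$, and both detect the jump of $S_{\c^2}[N]$ through Hecke's discriminant formula, i.e.\ through which of $\calL(k,4)$, $\calL(k,16)$, $\calL_{tr}(k,64)$ is nonempty. The genuine divergence is in how the one ambiguous conductor is resolved, namely for $2\Z_k=\p_1^2\p_2$ of type $(1^21)_0$ with $\calL(k,4)\ne\emptyset$, where $\c_0\in\{\p_1^2,\p_1\p_2\}$. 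The paper settles this purely formally: since $\N(\p_1^2)/z_k(\p_1^2)=\N((2))/z_k((2))=4$, the equality $S_{(\p_1^2)^2}[N]=S[N]$ would give $|C_{(\p_1^2)^2}|=4|C_{(1)}|>|C_{(4)}|$, contradicting the surjection of $C_{(4)}$ onto $C_{(\p_1^2)^2}$; hence $\c_0=\p_1\p_2$. You instead prove $\p_2\mid\c_0$ arithmetically, by a trace argument in the spirit of Lemma \ref{codiff}: after scaling $\be_0$ by a square so that $\be_0=1+2\ga$ with $\ga$ $2$-integral, the square-norm condition forces $\Tr(\ga)$ to be even, and since the trace of any integer of the wildly ramified quadratic $k_{\p_1}/\Q_2$ lies in $2\Z_2$, the $\p_2$-component of $\ga$ is even, whence $\be_0\equiv1\pmod{\p_2^2}$ and $\p_1\p_2\mid\c_0$. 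This is valid (note only that after the scaling $\ga$ is merely $2$-integral rather than integral, which is all the computation needs), and it even yields a dividend the formal argument does not: an explicit local statement that any square-norm class which is a square modulo $\p_1^2$ is automatically a square modulo $\p_2^2$. What the paper's version buys is brevity and uniformity, requiring no local computation beyond those already made for $z_k(\c)$. Finally, your importing of the conductor values from the proof of Proposition \ref{propdist} in the unambiguous cases is exactly the alternative route the paper acknowledges in the footnote to its own proof, and your observation that nothing relies on Proposition \ref{prop_c4_ext} is precisely the non-circularity demanded by the footnote of that proposition.
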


\begin{proof}
First observe from \eqref{it_all_iso} of Proposition \ref{prop_c4} that if some $\c$ appears on the list, so do all of its
multiples, and conversely if some $\c$ does not, neither do its factors.

As the size of $C_{\mfc^2} = \Cl_{\mfc^2}[N] / D_{\mfc^2}[N]$ is measured by the 
exact sequence of Proposition \ref{prop:exseq}, we consider the change in the size
of the other factors when we replace $\mfc$ by a multiple $\mfc'$ of $\mfc$. Two
other factors may change in \eqref{eqn:exseq}:
\begin{itemize}
\item The quantity $Z_{\mfc}[N]/Z_{\mfc}^2$ may increase in size; this is necessary but not
sufficient for $C_{\mfc^2}$ to increase in size. By Proposition \ref{easyzk1},
$|Z_{\mfc}[N]/Z_{\mfc}^2| = \N(\mfc) / z_k(\mfc)$ where $z_k(\mfc)$ is defined in Definition \ref{def_zkc} 
(see also Theorem \ref{prop_compute_zkc}). 
\item
The quantity $S_{\mfc^2}[N]$ may decrease in size.  Then 
$Z_{\mfc}[N]/Z_{\mfc}^2$ must increase, and by a larger proportion if $|C_{\mfc^2}|$ also increases.

To check this possibility, observe that 
Proposition \ref{prop:hec} implies that the equality $S_{\mfc^2}[N] = S_{\mfc'^2}[N]$ with $\mfc | \mfc'$
is equivalent to the fact that
all quadratic extensions of $k$, unramified at infinity,
whose discriminants divide $4/\mfc^2$ must in fact have discriminants dividing $4/\mfc'^2$.

\end{itemize}

This information is enough to prove the proposition; we explain in detail for a representative case,
and give a sketch for the remaining cases.

Suppose that $(2) = \mfp_1^2 \mfp_2$ in $L$ with $d = \Disc(k) \equiv 0 \ (\textmod \ 8)$. By Definition \ref{def_zkc},
$z_k(\mfc)$ doubles when
$\mfc$ increases from $\mfp_1^2$ to $(2)$, or from $\mfp_1$ or $\mfp_2$ to $\mfp_1 \mfp_2$. $\N(\mfc)$ also doubles
in each of these cases and so $|C_{\mfc^2}|$ stays the same.

There are quartic fields of discriminant $4d$ or $64d$, but not both. In the former case, 
$|S_{\mfc^2}[N]|$ decreases when $\mfc$ goes to $(2)$ from either $\mfp_1 \mfp_2$ or $\mfp_1^2$, or both. 
We can rule
out $\mfp_1^2$, because $\N(\mfp_1^2) / z_k(\mfp_1^2) = \N(2) / z_k((2))$.
Therefore $|S_{(\mfp_1 \mfp_2)^2}[N]| = 2|S_{(4)}[N]|$, and so 
$|C_{(\mfp_1 \mfp_2)^2}| = |C_{(4)}|$.
Put together, these observations establish that $|C_{\mfc^2}|$ is the same
for all $\mfc \in \{ \mfp_1, \mfp_2, \mfp_1^2, \mfp_1 \mfp_2, (2) \}$,
as claimed.\footnote{Moreover, observe that this tells us that the associated quadratic extensions
$K_6 = k(\sqrt{\alpha})$ satisfy $\mfd(K_6/k) = \p_1^2$. This adds information to what we determined
in the proof of Proposition \ref{propdist}; conversely, in cases where we determined 
this information in Proposition \ref{propdist}, we could apply it in the present proof (although in each case
it can be determined from the statement of Proposition \ref{propdist} and the considerations described above).}

If instead there are quartic fields of discriminant $64d$, then
$|S_{\mfc^2}[N]|$ decreases when $\mfc$ goes from $1$ to either $\mfp_1$ or 
$\mfp_2$. As $\N(\c)/z_k(\c)$ doubles, $|C_{\c^2}|$ stays the same. Therefore, 
$|C_{\mfc^2}|$ is the same for $\mfc \in \{ \mfp_1^2, (2) \}$,
and separately for all $\mfc \in \{ 1, \mfp_1, \mfp_2, \mfp_1 \mfp_2 \}$, 
proving this case of the proposition.
\\
\\
The remaining cases are similar, and we omit the details. Some brief remarks, helpful for verifying the results:
\begin{itemize}
\item
When $2$ is $(1^21)$ in $k$ the values of $z_k(\mfc)$ depend on whether 
$\Disc(k)$ is $0$ or $4 \ (\textmod \ 8)$, explaining the different results for these two cases.
\item
When $2$ is $(21)$ in $k$ and $\calL_{tr}(k, 64) \neq \emptyset$, 
$|S_{\c^2}[N]|$ decreases when $\c$ increases from $\Z_k$ to $\p_2$, but 
$|\N(\c)/z_k(\c)|$ increases by a factor of $4$ and we cannot conclude that 
$|C_{\c^2}|$ remains the same. Indeed, $\N(\p_2)/z_k(\p_2) = \N(2)/z_k((2))$,
and $2\N(1)/z_k((1)) = \N(\p_1)/z_k(\p_1)$ with a decrease in $|S_{\c^2}[N]|$,
so that $|C_{\p_2^2}| = |C_{(4)}|$ and
$|C_{(1)}| = |C_{\p_1^2}|$, so that by elimination we deduce that
$|C_{\p_1^2}| \neq |C_{(4)}|$.
\item
When $2$ is $(111)$, $\calL(k, 16)$ is nonempty, and the corresponding 
extensions $K_6/k$ must all have the same discriminant, corresponding to an 
increase of $1$ in the $2$-rank of appropriate ray class groups. This 
discriminant is $(\p \p')^2$ for two of the primes $\p, \ \p'$ of $k$ above
$2$, and we write $\mfp_1$ for the remaining prime.
\end{itemize}
\end{proof}

For each $L$ in $\calL(k, 4)$, $\calL(k, 16)$, and $\calL_{tr}(k, 64)$, we can
now prove that $S'(s) = M_{2,L}(s)$. The analysis again breaks up into 
cases, and we present the details for a representative case.

Suppose then that $L \in \calL(k, 16)$ and that $2$ is totally split in $L$. As before write $\p_1$ for the prime of $k$
above $2$ which does not ramify in the extension $K_6/k$ corresponding to $L$, and write $\p_2, \ \p_3$ for the other
two primes above $2$. By Theorem \ref{thma4s4} we have
\begin{equation}
T_{\c, 2}(s) = \frac{1 + \chi(\p_1 \p_2) + \chi(\p_1 \p_3) + \chi(\p_2 \p_3)}{2^s},
\end{equation}
where for each $\c$, $\chi(\p_i \p_j) = \chi_{\phi}(2/\p_i \p_j)$ if $\p_i \p_j$ is coprime
to $\c$, and  $\chi(\p_i \p_j) = 0$ otherwise. In particular, we have $T_{\c, 2}(s) = 1 + \frac{\chi_{\phi}(\p_1)}{2^s}$
when $\c = \p_1$, and $T_{\c, 2}(s) = 1$ for the other $\c$ listed in Proposition \ref{prop_list_c}.
By Proposition \ref{prop_list_c} we obtain contributions to $S'(s)$ from
the ideals
$\mfc = \p_1, \mfp_1 \mfp_2, \mfp_1 \mfp_3, \mfp_2 \mfp_3, (2)$ as follows:

\begin{itemize}
\item
$\mfc = \mfp_1: \ \ \ 
 \dfrac{4}{8^s} \cdot 
\dfrac{2^s}{2} \cdot 
\left(1 - \dfrac{1}{2^s}\right)\left(1 + \dfrac{ \chi_{\phi}(\mfp_1) }{2^s} \right)
= 
2 \cdot 2^{-2s} + 2(\chi_{\phi}(\mfp_1) - 1) 2^{-3s} - 2 \chi_{\phi}(\p_1) 2^{-4s}\;.
$
\item $\mfc = \mfp \mfp': \ \  \dfrac{4}{8^s} \cdot 
\dfrac{4^s}{4} \cdot 
\left(1 - \dfrac{1}{2^s}\right)^2
=
2^{-s} - 2 \cdot 2^{-2s} + 2^{-3s}\;.
$
\item $\mfc = (2): \ \  \dfrac{4}{8^s} \cdot 
\dfrac{8^s}{4} \cdot 
\left(1 - \dfrac{1}{2^s}\right)^3
=
1 - 3 \cdot 2^{-s} + 3 \cdot 2^{-2s} - 2^{-3s}\;.
$
\end{itemize}
Adding each of these contributions (with three ideals of the form $\mfp \mfp'$), we obtain
a total of
$1 - 2^{-2s} + 2 \chi_{\phi}(\mfp) 2^{-3s} - 2 \chi_{\phi}(\mfp) 2^{-4s}.$ If $L \in \calL(k, 16)$, then
$\mfd(K_6/k) = (\mfp_2 \mfp_3)^2$ and $\mfp_1$ can split or be inert in $K_6$. If it splits, then 
$\chi_{\phi}(\mfp) = 1$, and 
Theorem \ref{thm_splitting_types} implies that the splitting type of $2$ in $L$ is $(1^2 1^2)$, and the above contribution
is $1 - 2^{-2s} + 2 \cdot 2^{-3s} - 2 \cdot 2^{-4s}$; if it is inert, then 
Theorem \ref{thm_splitting_types}  implies that the splitting type of $2$ in $L$ is $(2^2)$, and the above contribution
is $1 - 2^{-2s} - 2 \cdot 2^{-3s} + 2 \cdot 2^{-4s}$.

This verifies that $S'(s) = M_{2,L}(s)$ for this case, and 
the remaining cases are proved in the same way. The only remaining case
where $\c \neq 1$ is $T_{\c, 2}(s) = 1 + \chi_{\phi}(\mfp_1)/2^s$
when $L \in \calL(k, 16)$, $2$ is partially split,
and $\mfc = \p_1$, where once again $C_{\c^2} \isom \Cl_{4/\c^2}/\Cl_{4/\c^2}^2$.
In the other cases $T_{\c, 2}(s) = 1$
and the computations are simpler; in particular, it suffices to appeal to Proposition \ref{prop_c4} rather than
Proposition \ref{prop_c4_ext}.

For each combination of possibilities for $n^2$ and the splitting types of $K_6$ and $L$ listed in
Proposition \ref{prop_split_table}, we thus check that $S'(s) = M_{2, L}(s)$, and the product over primes
$p \neq 2$ is handled as in \eqref{eqn_s4_cont}. This completes the proof.

\section{Results with Signatures}\label{sec_sig}

In the case of cubic fields with given quadratic resolvent, the quadratic
resolvent determines the signature of the cubic field. In our case this is
not true: if $k$ is a complex cubic field then the corresponding quartic
fields $K$ have signature $(2,1)$, but if $k$ is a totally real cubic
then $K$ can either be totally real (signature $(4,0)$), or totally complex
(signature $(0,2)$), and we may want to separate these families.

As mentioned in \cite{Coh_a4s4} and \cite{CDO_quartic}, it is possible to modify
the above work to take into account signature constraints (or more generally
a finite number of local conditions). 
Since there are no new results when $k$ is complex, in this section we always
assume that $k$ is a totally real cubic field.

We define $\calF^+(k)$ as the subset of all totally real quartic fields in
$\calF(k)$, and define
$$\Phi_k^+(s)=\dfrac{1}{a(k)}+\sum_{K\in\FF^+(k)}\dfrac{1}{f(K)^s}\;.$$
We define $\calL_2^*(k)$ and $\calL^*(k, 1)$ as in Definition \ref{defll},
only without any restriction that the quartics be totally real.
In this setting we have the following:
\begin{theorem}\label{thm_main_sig}
The formulas of Theorem \ref{thm_main} hold for $\Phi_k^+(s)$ with the following two
modifications:
\begin{itemize}
\item The formulas are multiplied by $\frac{1}{4}$.
\item The sums over $\calL_2(k)$ and $\calL(k, 1)$ are replaced with sums over
$\calL_2^*(k)$ and $\calL^*(k, 1)$.
\end{itemize}
\end{theorem}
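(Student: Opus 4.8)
The plan is to revisit the proof of Theorem \ref{thm_main} and identify precisely where the ``totally real'' restriction enters, then show that relaxing it produces exactly the two stated modifications. Recall that the entire computation of $\Phi_k(s)$ rests on counting quadratic extensions $K_6/k$ of trivial norm via the Selmer-group machinery of Section \ref{sec:CDO}. The signature of the quartic field $K$ is governed by whether the generator $\al$ of $K_6 = k(\sqrt{\al})$ is totally positive: when $k$ is totally real, $K$ is totally real exactly when $\al \in V^+(k)$, and totally complex otherwise. Thus imposing the constraint ``$K$ totally real'' amounts to restricting from $S[N]$ to $S^+(k)$ throughout, whereas dropping it (as $\Phi_k^+$ does not — wait, $\Phi_k^+$ \emph{imposes} total reality) means we track the totally positive Selmer group in place of $S[N]$.

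First I would rerun the argument of Lemma \ref{lem_prelim} and Corollary \ref{corphik} with $S^+(k)$ replacing $S[N]$ at every occurrence. The only place the group $S[N]$ is used quantitatively is in Corollary \ref{corsc2}, where $|S_{\c^2}[N]|/|C_{\c^2}|$ is evaluated using the exact sequences of Proposition \ref{prop:exseq} together with $|U(k)/U(k)^2| = 2^{r_1(k)+r_2(k)}$ from Dirichlet's unit theorem. Passing to the totally positive version replaces $U(k)/U(k)^2$ by $U^+(k)/U(k)^2$, which for a totally real cubic field ($r_1 = 3$, $r_2 = 0$) changes the unit contribution by a factor of $[U(k):U^+(k)]^{-1} = 2^{-r_1}$ in the relevant index, and correspondingly replaces the class group $\Cl(k)$ by the narrow class group $\Cl^+(k)$ and each $C_{\c^2}$ by its narrow analogue $C^+_{\c^2}$. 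The net effect on the constant $2^{2-r_2(k)}$ appearing in Corollary \ref{corsc2} and Theorem \ref{thma4s4} is a uniform extra factor of $\frac14$, which accounts for the first modification.

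Second, I would check that replacing $\Cl$, $C_{\c^2}$, etc.\ by their narrow counterparts changes the \emph{indexing set} of the nontrivial-character sum from quadratic extensions unramified at infinity to arbitrary quadratic extensions with the appropriate conductor dividing $(4)$. By Corollary \ref{corcrux}, dropping the ``unramified at infinity'' condition is exactly what replaces $\calL_2(k)$ and $\calL(k,1)$ by $\calL_2^*(k)$ and $\calL^*(k,1)$, giving the second modification. Crucially, the local computations at $2$ — the values $z_k(\c)$ of Definition \ref{def_zkc}, the Euler factors $M_1(s)$ and $M_{2,L}(s)$, and the values $\om_L(p)$ — are insensitive to the signature condition, since they depend only on the splitting of $2$ and of odd primes in $k$, $K_6$, and $L$; the signature is an archimedean datum that factors out cleanly. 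One must verify that Proposition \ref{prop_c4_ext}, which supplies $\chi(\mfp_1^2)$ in the two cases where $C_{\c^2} \isom C_{(4)}$, goes through in the narrow setting — but its proof already anticipates this, as the parenthetical ``as in Section \ref{sec_sig}'' in the choice of totally positive $\alpha$ indicates.

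The main obstacle I anticipate is bookkeeping the factor of $\frac14$ consistently: one must confirm that the narrow analogue of Corollary \ref{corsc2} reads $|S^+_{\c^2}[N]|/|C^+_{\c^2}| = 2^{-r_2(k)} z_k(\c)/\N(\c)$ (i.e.\ the exponent $2-r_2(k)$ drops to $-r_2(k)$), so that after the global prefactor $2^{2-r_2(k)}$ in Theorem \ref{thma4s4} is absorbed the overall multiplier is precisely $\frac14 = 2^{-2}$ relative to the unsigned formula, and that no spurious factors enter from $[U(k):U^+(k)] = 2^{r_1} = 8$ versus $|\Cl^+(k)/\Cl(k)|$. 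Concretely, the cleanest route is to observe that both the analogue of Proposition \ref{prop:exseq} and Dirichlet's theorem are modified in tandem, and the ratio of the two counting formulas is independent of $\c$, so it may be pulled out as a global constant; evaluating that constant at any single splitting type of $2$ (e.g.\ by comparing the $1^{-s}$ coefficients, which count $|C^+_{(4)}|$ versus $|C_{(4)}|$ and differ by the factor $[\,\Cl^+(k):\Cl(k)\,]$ governed by $\rk_2^+(k)$ versus $\rk_2(k)$) pins it down to $\frac14$. Once this single normalization is verified, the rest of the proof is a verbatim repetition of Section \ref{sec_quartic_proofs} with narrow class groups in place of ordinary ones, and I would simply remark that every step carries over without change.
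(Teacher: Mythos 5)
Your first half follows the paper's own route: impose total positivity throughout Section \ref{sec:CDO}, observe that $Z^+_{\c}=Z_{\c}$ and $Z^+_{\c}[N]=Z_{\c}[N]$ so that $z_k(\c)$ need not be recomputed, and extract the factor $\frac14$ from the narrow analogue of Corollary \ref{corsc2} (the paper's Theorem \ref{thma4s4plus}, where the prefactor $2^{2-r_2(k)}=4$ of Theorem \ref{thma4s4} becomes $1$). One caution on how you get there: your assertion that the unit contribution changes by $[U(k):U^+(k)]^{-1}=2^{-r_1}$ is wrong in general, since $[U(k):U^+(k)]=2^{r_1}$ holds if and only if $\Cl^+(k)=\Cl(k)$. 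The paper sidesteps this by never comparing unit indices directly: it proves $|S^+(k)|=|\Cl^+(k)/\Cl^+(k)^2|$ from the sequence $1\to U^+(k)/U(k)^2\to S^+(k)\to\Cl(k)[2]\to1$ (ordinary class group) combined with the ray class group exact sequence, whence $|S^+_{\c^2}(k)|/|C^+_{\c^2}|=z_k(\c)/\N(\c)$ cleanly. Your closing remark that the two modifications occur ``in tandem'' is the correct repair of your own computation, so this is a presentational flaw rather than a fatal one.

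The genuine gap is your final claim that the remainder is ``a verbatim repetition of Section \ref{sec_quartic_proofs} with narrow class groups in place of ordinary ones.'' It is not, and this is precisely where the paper's Section \ref{sec_sig} has to do real work. The proof of Theorem \ref{thm_main} uses two structural facts that fail in the narrow setting: Proposition \ref{prop_c4} gives $|C_{(4)}|/|C_{(1)}|\in\{1,2\}$, whereas Lemma \ref{lem_c4_plus} of the paper shows $\bigl(|C^+_{(4)}|/|C^+_{(1)}|\bigr)\cdot\bigl(|C_{(4)}|/|C_{(1)}|\bigr)=4$, so the narrow ratio can equal $4$; and consequently Proposition \ref{prop_l2k} \eqref{it_l2k_only_one} fails for the starred sets, i.e.\ two of $\calL^*(k,4)$, $\calL^*(k,16)$, $\calL^*_{tr}(k,64)$ can be nonempty simultaneously (and when $2$ is $(111)$, all three primes above $2$ become ``distinguished'' for different fields of $\calL^*(k,16)$). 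Since Proposition \ref{prop_list_c} --- the statement that tells you exactly which ideals $\c$ satisfy $|C_{\c^2}|=|C_{(4)}|$, hence which terms of the $\c$-sum assemble into the Euler factor $M_{2,L}(s)$ for each $L\notin\calL(k,1)$ --- is stated and proved under that now-failing hypothesis, it must be reinterpreted: when $|C^+_{(4)}|=4|C^+_{(1)}|$, two of its cases occur at once, and one must show $|C^+_{\c^2}|=2|C^+_{(1)}|$ for $\c$ listed in exactly one of them and $|C^+_{\c^2}|=4|C^+_{(1)}|$ for $\c$ listed in both. Without this argument you cannot conclude that each field of $\calL^*_2(k)\setminus\calL^*(k,1)$ is detected by precisely the right set of ideals $\c$, i.e.\ that its $2$-adic factor is $M_{2,L}(s)$; the ``local computations are insensitive to signature'' only after one knows in which groups $C^+_{\c^2}$ each character lives, and that is exactly what changes.
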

This result exhibits a curious duality: in order to enumerate quartic fields {\itshape with}
signature conditions, we sum over fields in $\calL_2^*(k)$ {\itshape without} signature conditions.

\subsection{Theorem \ref{thma4s4} with signature conditions}

We first sketch a proof of a version of 
Theorem \ref{thma4s4} for this setting, 
mainly
explaining the difference with the case where no signature conditions are
added.

\begin{theorem}\label{thma4s4plus}
$$\Phi_k^+(s)=\dfrac{1}{a(k)2^{3s}}\sum_{\c\mid2\Z_k}\N\c^{s-1}z_k(\c)\prod_{\p\mid\c}\left(1-\dfrac{1}{\N\p^s}\right)\sum_{\chi\in X^+_{\c^2}}F_k(\chi,s)\;,$$
where $X^+_{\c^2}$ is the group of characters of $C^+_{\c^2}$, defined as
$C_{\c^2}$ with the added condition that $\be$ be totally positive,
and $z_k(\c)$ and $F_k(\chi,s)$ are as in Theorem \ref{thma4s4}.\end{theorem}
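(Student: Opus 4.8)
The plan is to rerun the argument of Section~\ref{sec_CDO} essentially verbatim, changing only that the generator $\al$ of $K_6=k(\sqrt{\al})$ is now required to be totally positive rather than merely of square norm. First I would record that a quartic $K\in\FF(k)$ is totally real if and only if its Galois closure, and hence $K_6$, is totally real, which by the description in Theorem~\ref{thm_quartic_corr} happens exactly when $\al$ is totally positive. Thus $a(k)\Phi_k^+(2s)=\sum_{K_6/k}\N(\d(K_6/k))^{-s}$, the sum now being over \emph{totally real} quadratic extensions of trivial norm (including $k/k$).

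Next I would reprove the totally positive analogues of Propositions~\ref{prop:kum} and \ref{prop:acns} and of Corollary~\ref{corphik}, replacing ``square norm'' by ``totally positive'' throughout: the parametrizing ideals $\a$ now range over squarefree integral ideals of square norm whose \emph{narrow} ideal class is a square, the Selmer group $S[N]$ is replaced by its totally positive analogue $S^+(k)$, and $C_{\c^2}$ by $C^+_{\c^2}$. The formal steps---Hecke's discriminant formula (Proposition~\ref{prop:hec}), the dual M\"obius inversion of Lemma~\ref{lem_prelim}, and the detection of the narrow-class condition by summing over characters $\chi\in X^+_{\c^2}$---are identical to the unsigned case and yield
\[
a(k)\Phi_k^+(s)=\frac{1}{2^{3s}}\sum_{\c\mid2\Z_k}\frac{|S^+_{\c^2}(k)|}{|C^+_{\c^2}|}\,\N\c^s\prod_{\p\mid\c}\Big(1-\frac{1}{\N\p^s}\Big)\sum_{\chi\in X^+_{\c^2}}F_k(\chi,s).
\]

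The only genuinely new computation is the signed analogue of Corollary~\ref{corsc2}, namely $|S^+_{\c^2}(k)|/|C^+_{\c^2}|=z_k(\c)/\N(\c)$, where crucially the factor $2^{2-r_2(k)}$ has disappeared. I would deduce this from the totally positive versions of the exact sequences of Proposition~\ref{prop:exseq}: the narrow analogue $1\to U^+(k)/U(k)^2\to S^+(k)\to\Cl^+(k)[2]\to1$ together with Dirichlet's unit theorem gives $|U^+(k)/U(k)^2|=2^{r_1+r_2}/|\mathrm{sig}(U(k))|$, and $C^+_{(1)}\isom\Cl^+(k)/\Cl^+(k)^2$ by the narrow analogue of Proposition~\ref{prop_c4}. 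The main obstacle is the five-term sequence linking $S^+_{\c^2}(k)$, $S^+(k)$, $C^+_{\c^2}$ and $C^+_{(1)}$: in the signed setting its middle term is no longer $Z_\c[N]/Z_\c^2$ but acquires an additional archimedean factor $\{\pm1\}^{r_1}/\mathrm{sig}(U(k))\isom\Ker(\Cl^+(k)\to\Cl(k))$ of order $2^{r_1-\dim_{\F_2}\mathrm{sig}(U(k))}$. Since $r_2(k)=0$, this archimedean factor has exactly the order by which the totally positive units enlarge $|S^+(k)|$, so the two cancel in the ratio $|S^+_{\c^2}(k)|/|C^+_{\c^2}|$; verifying this cancellation carefully is what turns the constant $2^{2-r_2(k)}=4$ of Theorem~\ref{thma4s4} into $1$, equivalently it is the origin of the global factor $\tfrac14$ of Theorem~\ref{thm_main_sig}. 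With this constant in hand the displayed formula becomes the assertion of the theorem.
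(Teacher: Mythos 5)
Your reduction to the identity $|S^+_{\c^2}(k)|/|C^+_{\c^2}|=z_k(\c)/\N(\c)$ follows the paper exactly (totally real $\Leftrightarrow$ $\al$ totally positive, narrow-class versions of Propositions \ref{prop:kum} and \ref{prop:acns}, and the $+$-version of Corollary \ref{corphik}), but the one step you yourself identify as genuinely new is wrong, in two compensating ways. First, your ``narrow analogue'' $1\to U^+(k)/U(k)^2\to S^+(k)\to\Cl^+(k)[2]\to1$ is not exact; indeed the map $S^+(k)\to\Cl^+(k)[2]$ is not even well defined, since replacing a representative $u$ by $u\ga^2$ replaces $\q=\sqrt{u\Z_k}$ by $\q\cdot(\ga)$, and the \emph{narrow} class of a principal ideal $(\ga)$ need not be trivial. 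The well-defined target is $\Cl(k)[2]$ (this is the sequence the paper uses), or equivalently $\Cl^+(k)[2]/\Ker(\Cl^+(k)\to\Cl(k))$, and for totally real $k$ the correct count is $|S^+(k)|=|\Cl^+(k)/\Cl^+(k)^2|$ (equation \eqref{eqn:s_cl_eq} of the paper), \emph{not} $|U^+(k)/U(k)^2|\cdot|\Cl^+(k)[2]|$: your value is too large by the factor $|U^+(k)/U(k)^2|$, which equals $2$ precisely in the interesting case where a nonsquare totally positive unit exists. Second, the five-term sequence does \emph{not} acquire an archimedean factor in its middle term. Since every class in $(\Z_k/\c^2)^*$ has a totally positive lift, and every class having a lift of square norm has a totally positive such lift, one has $Z^+_{\c}=Z_{\c}$ and $Z^+_{\c}[N]=Z_{\c}[N]$, and the sequence reads
$$1\LR S^+_{\c^2}(k)\LR S^+(k)\LR \dfrac{Z_{\c}[N]}{Z_{\c}^2}\LR C^+_{\c^2}\LR C^+_{(1)}\LR 1\;,$$
with middle term unchanged: all sign conditions are absorbed into the $+$ superscripts at the two ends. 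This is precisely the crux of the paper's proof (and is also why $z_k(\c)$ does not need to be recomputed).

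Because your two errors are off by the same factor $|U^+(k)/U(k)^2|=|\{\pm1\}^{r_1}/\sgn(U(k))|$, they cancel and you land on the correct ratio $z_k(\c)/\N(\c)$; but this cancellation is an artifact of two false statements, not a mechanism. If you repair either claim while keeping the other, you obtain $|S^+_{\c^2}(k)|/|C^+_{\c^2}|$ wrong by a factor of $2$ whenever $\rk_2^+(k)>\rk_2(k)$, which would contradict the theorem (and the examples of Section \ref{sec_computations}). The actual reason the constant $2^{2-r_2(k)}$ of Theorem \ref{thma4s4} degenerates to $1$ is the equality $|S^+(k)|=|C^+_{(1)}|$, both sides being $|\Cl^+(k)/\Cl^+(k)^2|$, in contrast with $|S[N]|=2^{2-r_2(k)}|C_{(1)}|$ in the unsigned case, while the middle term $Z_{\c}[N]/Z_{\c}^2$ is literally the same in both settings.
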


\begin{proof} 
The condition that $K$ is totally real is equivalent to the fact that the
corresponding quadratic extension of trivial norm $K_6/k$ is unramified at 
infinity, or equivalently that $K_6=k(\sqrt{\al})$ with $\al$ totally positive.
In Proposition \ref{prop:kum} we must replace the condition $\ov{u}\in S[N]$
by $\ov{u}\in S^+(k)$ and require $\mfa$ to represent a square in $\Cl^+(k)$, 
and in the beginning of the computation of
$\Phi_k(s)$ after that proposition we must similarly replace all
occurrences of $S[N]$ by $S^+(k)$.

In definitions \ref{def_zkc} and \ref{defsa4} we must add superscripts $\ ^+$ 
to all the groups which are defined, adding everywhere the condition that 
$\be$ or $u$ is totally positive (in the language of class field theory, we write
$\be\equiv1\pmod{^*\c^2S_\infty}$, where $S_\infty$ is the modulus made of
the three infinite places of $k$). We write $Z^+_{\c}$ for the set of elements
of $(\Z_k/\c^2)^*$ which have a totally positive lift, but it is easily checked that
$Z^+_{\c}=Z_{\c}$ and
$Z^+_{\c}[N]=Z_{\c}[N]$, and hence that $z^+_k(\c)=z_k(\c)$, so we do not need
to compute again this subtle quantity. 

We also check that $S^+_{\c^2}[N]=S^+_{\c^2}$, and the exact sequence corresponding to
\eqref{eqn:exseq} of Proposition \ref{prop:exseq} is thus
\begin{equation}\label{eqn:exseq_plus}
1\LR S^+_{\c^2}(k)\LR S^+(k)\LR \dfrac{Z_{\c}[N]}{Z_{\c}^2}\LR C^+_{\c^2}\LR C^+_{(1)}\LR1\;.
\end{equation}
We have the exact sequence 
$1\LR U^+(k)/U^2(k)\LR S^+(k)\LR \Cl(k)[2]\LR1$
(the last $\Cl(k)[2]$ does \emph{not} have a $+$ subscript), and the
ray class group exact sequence shows that
$|\Cl^+(k)/\Cl^+(k)^2|=|\Cl(k)/\Cl(k)^2||U^+(k)/U^2(k)|$, which implies
that 
\begin{equation}\label{eqn:s_cl_eq}
|S^+(k)|=|\Cl^+(k)/\Cl^+(k)^2|\;.
\end{equation}
The computations leading to Corollary \ref{corphik} are identical, and
that corollary is thus valid if we replace $\Phi_k(s)$, $S_{\c^2}[N]$, 
$C_{\c^2}$, $X_{\c^2}$ by the corresponding values with $\ ^+$ superscripts.

The proof of Corollary \ref{corsc2} becomes

$$|S^+_{\c^2}(k)||Z_{\c}[N]/Z_{\c}^2||C^+_{(1)}|=|S^+(k)||C^+_{\c^2}|=|\Cl^+(k)/\Cl^+(k)^2||C^+_{\c^2}|=|C^+_{(1)}||C^+_{\c^2}|\;,$$
where we have also extended (3) of Proposition \ref{prop:exseq}; in other words 
$$|S^+_{\c^2}(k)|/|C^+_{\c^2}|=1/|Z_{\c}[N]/Z_{\c}^2|=z_k(\c)/\N(\c)$$
by Lemma \ref{easyzk1}. Putting everything together proves the theorem.
\end{proof}

\begin{remark} As in Theorem \ref{thm_main_sig}, there 
are exactly two differences between the formula of
Theorem \ref{thma4s4plus} and that of Theorem \ref{thma4s4}: we sum
on $\chi\in X^+_{\c^2}$ instead of $\chi\in X_{\c^2}$, and the
factor in front of $1/(a(k)2^{3s})$ is $1$ instead of
$2^{2-r_2(k)}=4$.

Since only the
trivial characters contribute to asymptotics, this implies (as already 
mentioned in \cite{Coh_a4s4}) that the number of totally real quartics
$L\in\FF(k)$ and positive discriminant up to $X$ is asymptotically $1/4$ of the total 
number, independently of the signatures of the fundamental units.
The same is also true for the set of {\itshape all} quartics of signature
(4, 0) and (0, 2), as proved by Bhargava \cite{B_count_quartic}.

\end{remark}
\subsection{The main theorem with signature conditions}
It is readily checked that the maps in \eqref{it_iso} of Proposition \ref{prop_c4}
also yield isomorphisms $\Cl^+_{(n)}(k)/\Cl^+_{(n)}(k)^2 \simeq C_{(n)}^+$ for $n = 1, 4$,
and that Proposition \ref{prop_c4_ext} similarly extends,
so that the sum in Theorem \ref{thma4s4plus} becomes a summation over 
$\calL^*_2(k)$ instead of $\calL_2(k)$.

In place of the remainder of Proposition \ref{prop_c4} we have the following.

\begin{lemma}\label{lem_c4_plus} We have the following equalities:
\begin{equation}\label{eqn:c4_plus1}
\frac{ |C_{(4)}^+|} {|C_{(1)}^+ |} \cdot
\frac{ |C_{(4)} |} {|C_{(1)} |} = 4,
\end{equation}
\begin{equation}\label{eqn:c4_plus2}
|C_{(4)}| = |C_{(1)}^+|.
\end{equation}
Therefore, $|C_{(4)}^+|/|C_{(1)}^+ |$ is equal to $2$ or $4$, depending on whether there does
or does not exist a nonsquare totally positive unit.
\end{lemma}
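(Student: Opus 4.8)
The plan is to reduce everything to counting: express each of the four groups $C_{(1)}$, $C_{(4)}$, $C^+_{(1)}$, $C^+_{(4)}$ as a power of $2$ whose exponent is one of the two-ranks $\rk_2(k)$ or $\rk^+_2(k)$, and then read off both displayed identities as elementary arithmetic of exponents. First I would record the four baseline relations obtained by specializing Corollary \ref{corsc2} and its totally positive analogue (the identity $|S^+_{\c^2}(k)|/|C^+_{\c^2}|=z_k(\c)/\N(\c)$ established in the proof of Theorem \ref{thma4s4plus}) at $\c=(1)$ and $\c=(2)$. Since $k$ is totally real we have $r_2(k)=0$, and by Lemmas \ref{easyzk1} and \ref{propm1} we have $z_k((2))=2$, $\N((2))=8$, and $|Z_{(2)}[N]/Z_{(2)}^2|=4$; substituting these gives
$$|C_{(4)}|=|S_{(4)}[N]|,\qquad |C_{(1)}|=\tfrac14|S[N]|,\qquad |C^+_{(4)}|=4\,|S^+_{(4)}(k)|,\qquad |C^+_{(1)}|=|S^+(k)|\;.$$

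Next I would evaluate the relevant Selmer groups. From Proposition \ref{prop:exseq}(2) and Dirichlet's unit theorem, $|S[N]|=|S(k)|/2=2^{r_1+r_2-1}|\Cl(k)[2]|=2^{2+\rk_2(k)}$, while $|S^+(k)|=|\Cl^+(k)/\Cl^+(k)^2|=2^{\rk^+_2(k)}$ by \eqref{eqn:s_cl_eq}. Combined with \eqref{it_iso} of Proposition \ref{prop_c4} and its totally positive extension quoted just before the lemma, this already yields $|C_{(1)}|=2^{\rk_2(k)}$ and $|C^+_{(1)}|=2^{\rk^+_2(k)}$. For \eqref{eqn:c4_plus2} I would then invoke \eqref{it_size} of Proposition \ref{prop_c4}, which gives $|C_{(4)}|=|C_{(1)}|$ or $2|C_{(1)}|$ according as $\rk^+_2(k)=\rk_2(k)$ or not, together with the Armitage--Fr\"ohlich bound \eqref{eqn_af} forcing $\rk^+_2(k)-\rk_2(k)\in\{0,1\}$ for a totally real cubic field. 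In either case $|C_{(4)}|=2^{\rk^+_2(k)}=|C^+_{(1)}|$, which is exactly \eqref{eqn:c4_plus2}.

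The one genuinely new computation, and the step I expect to be the main obstacle, is to pin down $|S^+_{(4)}(k)|$. Here $S^+_{(4)}(k)$ consists of classes of totally positive virtual units $u$ for which $x^2\equiv u\pmod{^*(4)}$ is soluble; such $u$ automatically have square norm (a totally positive virtual unit has positive, hence square, norm), so no sign subtlety intervenes. By Proposition \ref{prop:hec} and Lemma \ref{lem_square}, the map $u\mapsto k(\sqrt{u})$ sends $S^+_{(4)}(k)$ bijectively onto the set consisting of the trivial extension together with the quadratic extensions of $k$ unramified at every place, finite and infinite; by class field theory these correspond to the index-$2$ subgroups of the ordinary ideal class group, so $|S^+_{(4)}(k)|=|\Cl(k)[2]|=2^{\rk_2(k)}$. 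The care needed here is in checking surjectivity --- that every everywhere-unramified quadratic extension admits a totally positive defining element satisfying the $\pmod{^*(4)}$ solubility --- which follows from Lemma \ref{lem_square} after adjusting the defining element by a square.

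With this in hand, \eqref{eqn:c4_plus1} is immediate. The baseline relations give
$$\frac{|C^+_{(4)}|}{|C^+_{(1)}|}=\frac{4\,|S^+_{(4)}(k)|}{|S^+(k)|}=\frac{4\cdot 2^{\rk_2(k)}}{2^{\rk^+_2(k)}}=2^{\,2+\rk_2(k)-\rk^+_2(k)},$$
while $|C_{(4)}|/|C_{(1)}|=2^{\rk^+_2(k)}/2^{\rk_2(k)}=2^{\rk^+_2(k)-\rk_2(k)}$ by \eqref{eqn:c4_plus2}, and the two exponents sum to $2$, so the product is $4$. Finally, the concluding dichotomy follows by combining \eqref{eqn:c4_plus1} with \eqref{it_size} of Proposition \ref{prop_c4}: since $|C_{(4)}|/|C_{(1)}|$ equals $1$ or $2$, and equals $1$ precisely when there is no nonsquare totally positive unit, the ratio $|C^+_{(4)}|/|C^+_{(1)}|=4/\bigl(|C_{(4)}|/|C_{(1)}|\bigr)$ equals $4$ in that case and $2$ when such a unit exists.
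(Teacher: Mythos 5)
Your proof is correct, and its skeleton largely coincides with the paper's: both arguments rest on the size identity coming from the exact sequence \eqref{eqn:exseq_plus} (which you package through Corollary \ref{corsc2} and its totally positive analogue $|S^+_{\c^2}(k)|/|C^+_{\c^2}|=z_k(\c)/\N(\c)$ from the proof of Theorem \ref{thma4s4plus}), and both compute $|S^+_{(4)}(k)|=|\Cl(k)/\Cl(k)^2|$ by the same Hecke/class-field-theoretic count of everywhere-unramified quadratic extensions. The genuine divergence is in how \eqref{eqn:c4_plus2} is obtained. The paper gets it from the symmetric ``reversal'' identification: by Proposition \ref{prop:hec} and Lemma \ref{lem_square}, $|S^+(k)|$ equals the number of quadratic (or trivial) extensions of $k$ unramified at infinity with discriminant dividing $(4)$, hence $|S^+(k)|=|\Cl_{(4)}(k)/\Cl_{(4)}(k)^2|=|C_{(4)}|$, which combined with \eqref{eqn:s_cl_eq} and the isomorphism $\Cl^+(k)/\Cl^+(k)^2\isom C^+_{(1)}$ gives $|C_{(4)}|=|C^+_{(1)}|$ at once, with no rank inequalities needed; \eqref{eqn:c4_plus1} then follows immediately since $|S^+(k)|/|S^+_{(4)}(k)|=|C_{(4)}|/|C_{(1)}|$. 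You instead derive \eqref{eqn:c4_plus2} from the dichotomy in \eqref{it_size} of Proposition \ref{prop_c4} together with the Armitage--Fr\"ohlich bound \eqref{eqn_af}, pinning $\rk_2^+(k)-\rk_2(k)\in\{0,1\}$ (the lower bound coming from the surjection $\Cl^+(k)\rightarrow\Cl(k)$) and then doing exponent arithmetic. Both routes are valid and non-circular, since Proposition \ref{prop_c4} and \eqref{eqn_af} are established well before Section \ref{sec_sig}. What the paper's route buys is self-containedness: the whole lemma falls out of the single counting duality, and Armitage--Fr\"ohlich is never invoked. What yours buys is explicitness: the closed forms $|C_{(1)}|=2^{\rk_2(k)}$, $|C_{(4)}|=|C^+_{(1)}|=2^{\rk_2^+(k)}$, $|C^+_{(4)}|=2^{2+\rk_2(k)}$ make the product identity \eqref{eqn:c4_plus1} transparent as a cancellation of exponents, at the cost of importing one external theorem.
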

\begin{proof} 
By \eqref{eqn:exseq_plus}, we have
\begin{equation}
\frac{ |C_{(4)}^+|} {|C_{(1)}^+ |} \cdot
\frac{ |S^+(k) |} {|S_{(4)}^+(k) |} = 4.
\end{equation}
By Proposition \ref{prop:hec} and Lemma \ref{lem_square}, $|S^+(k)|$ and $|S_{(4)}^+(k)|$ are respectively
equal to the number of quadratic extensions of $k$, together with the trivial extension $k/k$, which
are unramified at infinity and whose
discriminants divide $(4)$ and $(1)$ respectively, which implies the equalities
$|S^+(k)| = |\Cl_{(4)}(k)/\Cl_{(4)}(k)^2|$ and 
$|S^+_{(4)}(k)| = |\Cl(k)/\Cl(k)^2|$. Note the reversal of the $4$ and $1$, a consequence of
Hecke's discriminant computation. Also recall from \eqref{eqn:s_cl_eq} 
that
$|S^+(k)| =  |\Cl^+(k)/\Cl^+(k)^2|$.

We immediately obtain \eqref{eqn:c4_plus2} and \eqref{eqn:c4_plus1} from these computations and the isomorphisms
$\Cl_{(n)}(k)/\Cl_{(n)}(k)^2 \simeq C_{(n)}$,
$\Cl^+_{(n)}(k)/\Cl^+_{(n)}(k)^2 \simeq C_{(n)}^+$ for $n = 1, 4$, and the final statement
follows from (\ref{it_size}) of Proposition \ref{prop_c4}.

\end{proof}
The results of Section \ref{sec_xnew} apply equally to this setting. Theorem
\ref{thmcrux} also holds, with the same proof, where we replace $\calL_2(k)$ with
$\calL_2^*(k)$ and $V^+(k)$ with $V(k)$. Analogues of \eqref{it_l2k_count} and the first part of
\eqref{it_l2k_count2}
of Proposition \ref{prop_l2k} hold, with $\calL(k, 1)$, $\calL_2(k)$, $\rk_2(k)$, $C_{(n)}$
replaced with  $\calL^*(k, 1)$, $\calL^*_2(k)$, $\rk^+_2(k)$, $C^+_{(n)}$ respectively. However,
in light of Lemma \ref{lem_c4_plus} the remainder of Proposition \ref{prop_l2k} no longer holds,
and in particular it is not true that at most one of $\calL(k, 4)$, $\calL(k, 16)$, and $\calL_{tr}(k, 64)$
can be nonempty.

Results analogous to those of Section \ref{sec_arith_s4} hold, with the same proofs, except
for the 
first part of Remarks \ref{rem_sec7},
where the conditions on the valuations of $\alpha \in W^+(k)$ appearing in the various cases are no longer mutually exclusive.

Finally, it is reasonably straightforward to adapt the arguments of Section \ref{sec_quartic_proofs}.
The key step is that we require, for each $\c | (2)$, an isomorphism
\begin{equation}\label{eqn_4c_iso_plus2}
\Cl^+_{\c'^2}(k) / \Cl^+_{\c'^2}(k)^2
\xrightarrow{\phi}
C^+_{\c^2}
\end{equation}
for appropriate $\c' | (2)$ with $\phi(\mfa) = \mfa/\N(\mfa)$. We already 
observed that the proof of Proposition 
\ref{prop_c4} yields such an isomorphism for $\c = \c' = (2)$ and $\c = \c' = (1)$,
and the proof of Proposition \ref{prop_c4_ext} also gives an isomorphism \eqref{eqn_4c_iso_plus2}
in the special cases described there.

For the remaining cases, it suffices to find $\c'$ such that both sides of \eqref{eqn_4c_iso_plus2}
have the same size, as then the desired isomorphism will automatically be induced
by the isomorphism 
$\Cl^+_{(4)}(k) / \Cl^+_{(4)}(k)^2
\xrightarrow{\phi}
C^+_{(4)}$. We lose the property $\phi(\mfa) = \mfa/\N(\mfa)$ for $\mfa$ not coprime to $(2)$,
but as in our analysis without signature conditions, this will not matter.

If  $|C^+_{(4)}| =2 |C^+_{(1)}|$ then nothing changes. If we have
 $|C^+_{(4)}| = 4 |C^+_{(1)}|$ then we need to reinterpret Proposition \ref{prop_list_c}:
 for the splitting types $(1^2 1)$, $(1^3)$, and $(21)$, the first bulleted point in the proof
  shows that $|C^+_{\c^2}| = 2 |C^+_{(1)}|$ for
 some $\c$ between $(1)$ and $(2)$. The second bulleted point in turn
 implies that two of 
 $\calL^*(k, 4)$, $\calL^*(k, 16)$, and $\calL^*_{tr}(k, 64)$ 
 are nonempty, as we previously observed was possible in Proposition \ref{propdist}.
 When $2$ is $(111)$, then only $\calL^*(k, 16)$
is nonempty, but all three primes $\mfp_i$ are `distinguished' for different fields in
$\calL(k, 16)$; we have  $|C^+_{\c^2}| = 2 |C^+_{(1)}|$ when $\c$ is prime, 
and  $|C^+_{(4)}| = 4 |C^+_{(1)}|$
when $\c$ is a product of two or all three primes.

Therefore,  when $|C^+_{(4)}| = 4 |C^+_{(1)}|$ and $2$ has splitting type $(1^2 1)$, $(1^3)$, or $(2 1)$,
two of the cases in Proposition \ref{prop_list_c} occur, and we interpret the proposition as saying that 
$|C^+_{\c^2}| = 2 |C^+_{(1)}|$ for $\c$ appearing for one of them, and 
$|C^+_{\c^2}| = 4 |C^+_{(1)}|$ for $\c$ appearing for both of them. 

The upshot is that our previous association of a set of $\c$ for each field in $\calL_2(k)$ remains accurate
for $\calL^*_2(k)$.
The reinterpreted Proposition \ref{prop_list_c} tells us which $X^+_{\c^2}$ contribute for each field in
$\calL^*_2(k)$, and the remainder of the proof is the same, including all of our computations of
$M_{2, L}$, so that Theorem \ref{thm_main_sig}
follows from Theorem \ref{thma4s4plus} in the same way that 
Theorem \ref{thm_main}
followed from Theorem \ref{thma4s4}.

Some explicit numerical examples are worked out in Section \ref{sec_computations}.

\section{Numerical Computations}\label{sec_computations}

We finish the paper by presenting some numerical examples in a few cases representative of our main results.
The proofs are immediate; the explicit number fields described below may be looked up in databases such as
database \cite{JRnf, parietal}.

\subsection{Examples for the $A_4$-quartic case}\label{exa4}\hfill
\noindent
\begin{itemize}
\item {\bf $\rk_2(k)=0$ and $2$ inert in $k$:}

\smallskip

\noindent
$k$ cyclic cubic of discriminant $49=7^2$, defined by $x^3-x^2-2x+1=0$.

$$\Phi_k(s)=\dfrac{1}{3}\left(1+\dfrac{3}{2^{3s}}\right)\prod_{p\Z_k=\p_1\p_2\p_3}\left(1+\dfrac{3}{p^s}\right)=\dfrac{1}{3}\left(1+\dfrac{3}{2^{3s}}\right)\prod_{p\equiv\pm1\pmod7}\left(1+\dfrac{3}{p^s}\right)\;.$$

The second equality comes from the fact that we are
in an Abelian situation, so such equalities also exist in the
subsequent formulas.

\medskip

\noindent
\item {\bf $\rk_2(k)=2$ and $2$ inert in $k$:}

\smallskip

\noindent
$k$ cyclic cubic of discriminant $26569=163^2$, defined by $x^3-x^2-54x+169=0$.

\[
\Phi_k(s)=\dfrac{1}{3}\left(1+\dfrac{3}{2^{3s}}\right)\prod_{p\Z_k=\p_1\p_2\p_3}\left(1+\dfrac{3}{p^s}\right)\\
+\left(1+\dfrac{3}{2^{3s}}\right)\prod_{p\Z_k=\p_1\p_2\p_3}\left(1+\dfrac{\om_L(p)}{p^s}\right)\;,
\]
where $L$ is the unique $A_4$-quartic field with cubic resolvent $k$, defined
by $x^4-x^3-7x^2+2x+9=0$.

\noindent
\smallskip
\item {\bf $\rk_2(k)=4$ and $2$ totally split in $k$:}

\smallskip

\noindent
$k$ cyclic cubic of discriminant $1019077929=31923^2$, defined by 
$x^3-10641x-227008=0$.

\begin{align*}\Phi_k(s)&=\dfrac{1}{3}\left(1+\dfrac{3}{2^{2s}}+\dfrac{6}{2^{3s}}+\dfrac{6}{2^{4s}}\right)\prod_{p\Z_k=\p_1\p_2\p_3,\ p\ne2}\left(1+\dfrac{3}{p^s}\right)\\
&\phantom{=}+\left(1+\dfrac{3}{2^{2s}}+\dfrac{6}{2^{3s}}+\dfrac{6}{2^{4s}}\right)\prod_{p\Z_k=\p_1\p_2\p_3,\ p\ne2}\left(1+\dfrac{\om_{L_1}(p)}{p^s}\right)\\
&\phantom{=}+\left(1+\dfrac{3}{2^{2s}}-\dfrac{2}{2^{3s}}-\dfrac{2}{2^{4s}}\right)\sum_{2\le i\le 5}\prod_{p\Z_k=\p_1\p_2\p_3,\ p\ne2}\left(1+\dfrac{\om_{L_i}(p)}{p^s}\right)\;,\end{align*}
where the $L_i$ are the five $A_4$-quartic fields with cubic resolvent $k$, 
defined by the respective equations $x^4-2x^3-279x^2-1276x+2132$, 
$x^4-2x^3-207x^2-108x+4464$, $x^4-2x^3-201x^2+154x+4537$,
$x^4-2x^3-255x^2-40x+13223$, $x^4-x^3-237x^2+132x+13908$,
and $L_1$ is distinguished by being the only one of the five fields in which
$2$ is totally split.
\end{itemize}

\subsection{Examples for the $S_4$-quartic case}\label{exs4}

\begin{itemize}
\item $k$ defined by 
$x^3-x^2-3x+1=0$, $\Disc(k) = 148$, $\rk_2^+(k)=0$, and $2$ splits as $(1^3)$ in $k$.

$$\Phi_k(s)=\left(1+\dfrac{1}{2^s}+\dfrac{2}{2^{3s}}\right)
\prod_{p\Z_k=\p_1\p_2}\left(1+\dfrac{1}{p^s}\right)\prod_{p\Z_k=\p_1^2\p_2}\left(1+\dfrac{1}{p^s}\right)\prod_{p\Z_k=\p_1\p_2\p_3}\left(1+\dfrac{3}{p^s}\right)\;.$$

\item $k$ defined by 
$x^3-4x-1=0$, $\Disc(k) = 229$, $\rk_2^+(k)=1$, and $2$ splits as $(21)$ in $k$.

\begin{align*}\Phi_k(s)&=\left(1+\dfrac{1}{2^{2s}}+\dfrac{4}{2^{3s}}+\dfrac{2}{2^{4s}}\right)\prod_{p\Z_k=\p_1\p_2,\ p\ne2}\left(1+\dfrac{1}{p^s}\right)\prod_{p\Z_k=\p_1^2\p_2}\left(1+\dfrac{1}{p^s}\right)\prod_{p\Z_k=\p_1\p_2\p_3}\left(1+\dfrac{3}{p^s}\right)\\
&\phantom{=}+\left(1-\dfrac{1}{2^{2s}}\right)\prod_p\left(1+\dfrac{\om_L(p)}{p^s}\right)\;,\end{align*}
where $L$ is the $S_4$-quartic field of discriminant $64\cdot229$ defined by
$x^4-2x^3-4x^2+4x+2=0$.

\end{itemize}
\subsection{Examples with signature conditions}\label{exsig}
Finally, we work out examples of the series $\Phi_k^+(s)$, described in Section \ref{sec_sig},
in both the $S_4$ and $A_4$ cases. 

\noindent
\begin{itemize}

\item $k$ noncyclic cubic defined by 
$x^3-x^2-5x+4=0$, $\Disc(k) = 469$, where $2$ splits as $(2 1)$.

\begin{align*}
\Phi_k^+(s) = \frac{1}{4} & \Bigg( \bigg(1 + \frac{1}{2^{2s}} + \frac{4}{2^{3s}} + \frac{2}{2^{4s}} \bigg)
\prod_{p\Z_k=\p_1\p_2,\ p\ne2}\left(1+\dfrac{1}{p^s}\right)\prod_{p\Z_k=\p_1^2\p_2,\ p\ne2}\left(1+\dfrac{1}{p^s}\right)
\prod_{p\Z_k=\p_1\p_2\p_3,\ p\ne2}\left(1+\dfrac{3}{p^s}\right)
\\ & + \bigg(1 + \frac{1}{2^{2s}} - \frac{4}{2^{3s}} + \frac{2}{2^{4s}} \bigg) \prod_{\p \neq 2} 
\bigg(1 + \frac{\omega_{L_1}(p)}{p^s} \bigg)
\\ & + \bigg(1 - \frac{1}{2^{2s}} \bigg) \prod_{\p \neq 2} \bigg(1 + \frac{\omega_{L_2}(p)}{p^s} \bigg)
+ \bigg(1 - \frac{1}{2^{2s}} \bigg) \prod_{\p \neq 2} \bigg(1 + \frac{\omega_{L_3}(p)}{p^s} \bigg)
\Bigg)\;.
\end{align*}
In the above, we have $|C^+_{(4)}| = 4|C^+_{(1)}| = 4$, and
there are three fields $L_1, L_2, L_3$ in $\calL_2^*(k)$,
of discriminants $(-1)^2 2^4 \cdot 469$, 
 $(-1)^2 2^6 \cdot 469$, 
and $(-1)^2 2^6 \cdot 469$ respectively, where $(-1)^2$ indicates that each field has two 
pairs of complex embeddings, 
and in which the splitting of $2$
is respectively $(2^2)$, $(1^4)$, and $(1^4)$.
The total of the $2$-adic factors is $1 + 2^{-4s}$, corresponding to the fact that the only totally real quartic field
of discriminant $2^a \cdot 469$ is $\Q(x)/(x^4 - 14 x^2 - 4x + 38)$, of discriminant
$2^4 \cdot 469$.
\medskip
\item
An example where $|C^+_{(4)}| = 2 |C^+_{(1)}|$ is furnished by 
$k := \Q(x)/(x^3 - 4x - 1)$, the unique cubic field of discriminant 229. We again have
three fields in $\calL^*_2(k)$, with discriminants $(-1)^2 229$,
$(-1)^0 2^6 229$, and $(-1)^2 2^6 229$.
Again $\Phi^+_k(s)$ is a sum of four terms which can be written down as in the previous example.
\medskip
\item
An $A_4$ example: $k$ cyclic of discriminant $31^2$, defined by
$x^3-x^2-10x+8=0$, in which $2$ is totally split. 
\begin{multline*}
\ \ \ \ \ \Phi_k^+(s) =
\dfrac{1}{4}\Bigg( \frac{1}{3} \bigg(1 + \frac{3}{2^{2s}} + \frac{6}{2^{3s}} + \frac{6}{2^{4s}} \bigg) 
\prod_{p\Z_k=\p_1\p_2\p_3}\left(1+\dfrac{3}{p^s}\right)
\\ + 
\bigg(1 - \frac{1}{2^{2s}} + \frac{2}{2^{3s}} - \frac{2}{2^{4s}} \bigg)
\prod_{p\Z_k=\p_1\p_2\p_3}\left(1+\dfrac{\om_L(p)}{p^s}\right)\Bigg)\;. \ \ \ \ \ 
\end{multline*}


\end{itemize}

\end{document}